\definecolor{darkgreen}{rgb}{0.0, 0.7, 0.0}
\definecolor{purple}{rgb}{0.5, 0.0, 0.5}
\definecolor{red}{rgb}{0.8, 0.2, 0.0}
\newenvironment{sis}{\left\{\begin{aligned}}{\end{aligned}\right.}
\newtheorem{thm}{Theorem}[section]
\newtheorem{lemma}[thm]{Lemma}
\newtheorem{sublemma}[thm]{Sublemma}
\newtheorem{prop}[thm]{Proposition}
\newtheorem{cor}[thm]{Corollary}
\newtheorem{fact}[thm]{Fact}
\newtheorem{theoremalpha}{Theorem}
\numberwithin{equation}{section}
\theoremstyle{definition}
\newtheorem{defi}[thm]{Definition}
\newtheorem{nota}[thm]{}
\newtheorem{question}[thm]{Question}
\newtheorem*{question*}{Question}
\theoremstyle{remark}
\newtheorem{remark}[thm]{Remark}
\newtheorem{remarks}[thm]{Remarks}
\newtheorem{example}[thm]{Example}
\newcommand{\Z}{\mathbb{Z}}
\newcommand{\Q}{\mathbb{Q}}
\newcommand{\R}{\mathbb{R}}
\newcommand{\NN}{\mathbb{N}}
\newcommand{\Pic}{\operatorname{Pic}}
\newcommand{\un}{\underline}
\newcommand{\ov}{\overline}
\newcommand{\kbar}{\bar{k}}
\newcommand{\Gal}{\mathrm{Gal}}
\newcommand{\sep}{\mathrm{sep}}
\newcommand{\Ql}{\mathbb{Q}_\ell}
\newcommand{\isomarrow}{\xrightarrow{\sim}}
\newcommand{\ev}{\mathrm{ev}}
\newcommand{\Zl}{\mathbb{Z}_\ell}
\newcommand{\CSp}{\mathrm{CSp}}
\newcommand{\mG}{\mathbb{G}}
\newcommand{\mult}{\mathrm{m}}
\newcommand{\Sp}{\mathrm{Sp}}
\newcommand{\Kbar}{\bar{K}}
\newcommand{\Ksep}{K^{\mathrm{sep}}}
\newcommand{\mono}{\mathrm{mon}}
\newcommand{\Fbar}{\bar{F}}
\newcommand{\mZ}{\mathbb{Z}}
\newcommand{\mC}{\mathbb{C}}
\newcommand{\wt}{\widetilde}
\newcommand{\wh}{\widehat}
\DeclareMathOperator{\Spec}{Spec \:}
\def \Im{{\rm Im}}
\DeclareMathOperator{\ri}{ri}
\DeclareMathOperator{\id}{id}
\DeclareMathOperator{\uguale}{=}
\def \PP{\mathbb{P}}
\def \P{\mathcal P}
\def \F{\mathcal F}
\def \L{\mathcal L}
\def \E{\mathcal E}
\def\O{\mathcal O}
\def \D{\mathcal D}
\def\M0{\mathcal M^0}
\def\t{\bf t}
\newcommand{\Aut}{{\operatorname{Aut}}}
\newcommand{\End}{{\operatorname{End}}}
\DeclareMathOperator{\codim}{{codim}}
\DeclareMathOperator{\lin}{{lin}}
\DeclareMathOperator{\ldim}{{ldim}}
\DeclareMathOperator{\Ker}{{Ker}}
\DeclareMathOperator{\Proj}{{Proj}}
\DeclareMathOperator{\Sym}{{Sym}}
\DeclareMathOperator{\CH}{{CH}}
\newcommand{\bbR}{{\mathbb R}}
\newcommand{\bbN}{{\mathbb N}}
\newcommand{\bbC}{{\mathbb C}}
\newcommand{\bbQ}{{\mathbb Q}}
\newcommand{\bS}{{\mathbf S}}
\newcommand{\gon}{\operatorname{gon}}
\newcommand{\Eff}{\operatorname{Eff}}
\newcommand{\Pseff}{\operatorname{Pseff}}
\newcommand{\Nef}{\operatorname{Nef}}
\newcommand{\Efft}{{^{\rm t} \hskip -.05cm \operatorname{Eff}}}
\newcommand{\Psefft}{{^{\rm t} \hskip -.05cm \operatorname{Pseff}}}
\newcommand{\Neft}{{^{\rm t} \hskip -.05cm \operatorname{Nef}}}
\newcommand{\cone}{\operatorname{cone}}
\newcommand{\aff}{\operatorname{aff}}
\newcommand{\conv}{\operatorname{conv}}
\renewcommand{\a}{\alpha}
\renewcommand{\r}{\rho}
\renewcommand{\t}{\tau}
\title{%
Effective cycles on the symmetric product of a curve, I: the diagonal cone. \\ 
\MakeLowercase{(with an appendix by} B\MakeLowercase{en} M\MakeLowercase{oonen)}}
\author{Francesco Bastianelli, Alexis Kouvidakis, Angelo Felice Lopez and Filippo Viviani}
\thanks{Research partially supported by INdAM (GNSAGA) and by the MIUR national projects``Geometria delle variet\`a algebriche" PRIN 2010-2011 and ``Spazi di moduli e applicazioni" FIRB 2012.}
\address{\hskip -.43cm Dipartimento di Matematica, Universit\`a degli Studi di Bari Aldo Moro, Via Edoardo Orabona, 4, 70125 Bari (Italy). e-mail {\tt francesco.bastianelli@uniba.it}}
\address{\hskip -.43cm Department of Mathematics and Applied Mathematics, University of Crete, GR-70013 Heraklion, Greece. e-mail {\tt kouvid@math.uoc.gr}}
\address{\hskip -.43cm Dipartimento di Matematica e Fisica, Universit\`a di Roma Tre, Largo San Leonardo Murialdo 1, 00146 Roma, Italy. e-mail {\tt lopez@mat.uniroma3.it, filippo.viviani@gmail.com}}
\address{\hskip -.43cm Radboud University, IMAPP, PO Box 9010, 6500GL Nijmegen, The Netherlands. e-mail {\tt b.moonen@science.ru.nl}}
\begin{document}

\keywords{}
\subjclass[2010]{14C25, 14H51, 14C20.}

\begin{abstract}
In this paper we investigate the cone $\Pseff_n(C_d)$ of pseudoeffective $n$-cycles in the symmetric product $C_d$ of a smooth curve $C$. We study the convex-geometric properties of the cone $\D_n(C_d)$ generated by the $n$-dimensional diagonal cycles. In particular we determine its extremal rays and we prove that $\D_n(C_d)$ is a perfect face of $\Pseff_n(C_d)$ along which $\Pseff_n(C_d)$ is locally finitely generated. 
\end{abstract}

\maketitle

\section{Introduction}
\label{intro}

A large amount of the beautiful classical results about the geometry of a smooth projective irreducible curve $C$ of genus $g$, such as for example the Brill-Noether theory \cite{GAC1}, can be stated and proved by studying, for some $d \ge 2$, the $d$-fold 
symmetric product $C_d$ of $C$, that is the smooth projective variety parameterizing unordered $d$-tuples of points of $C$,
and the Abel-Jacobi morphism 
$$\alpha_d\colon C_d \to \Pic^d(C)$$ 
which sends an effective  divisor $D\in C_d$ into its associated line bundle $\O_C(D)\in \Pic^d(C)$.


The aim of this paper and its sequel \cite{BKLV} is to study the cone of effective cycles (and its closure) on $C_d$. More precisely, for any $0\leq n\leq d$, we study the (convex) cone $\Eff_n(C_d)$ generated by \emph{effective} cycles inside the finite-dimensional $\bbR$-vector space $N_n(C_d)=N^{d-n}(C_d)$ of $n$-dimensional (or $(d-n)$-codimensional) cycles up to numerical equivalence, its closure $\Pseff_n(C_d)=\ov{\Eff_n(C_d)}\subset N_n(C_d)$, which is called the \emph{pseudoeffective} cone or cone of pseudoeffective (or simply pseff) cycles of dimension $n$, and its dual $\Nef^n(C_d)=\Pseff_n(C)^\vee=\Eff_n(C_d)^{\vee}\subset N^{n}(C_d)$, which is called the \emph{nef} cone or cone of nef cycles of codimension $n$.

While the structure of the full numerical ring of cycles $N^*(C_d)=\oplus_m N^m(C_d)$ 
is difficult to describe and it depends on the given curve $C$, there is a subring which is easy to describe
and on which we will focus most of our attention. In order to define this subring, 
consider the following two  divisor classes, well-defined up to numerical equivalence,
$$x = [\{D \in C_d : D = p_0+D', D' \in C_{d-1}\}]  \text{ and }  \theta=\alpha_d^*([\Theta])$$
where $p_0$ is a fixed point of $C$ and $\Theta$ is any theta divisor on $\Pic^d(C)$. It is well-known that $x$ is ample and $\theta$ is nef. 
These two classes generate a graded subring $R^*(C_d)=\oplus_m R^m(C_d)$ of 
$N^*(C_d)$, which 
we call \color{black}the \emph{tautological} ring of cycles, whose structure is well-understood (and independent of the given curve $C$), and which coincides with the full ring $N^*(C_d)$ if $C$ is a very general curve; see \S \ref{SS:tautring}. The cone $\Efft_n(C_d)
\subset R_n(C_d)\color{black}$ generated by tautological effective cycles of dimension $n$ is called the \emph{tautological effective} cone of dimension $n$, its closure $\Psefft_n(C_d)=\ov{\Efft_n(C_d)}\subset R_n(C_d)$ is called the \emph{tautological pseudoeffective} cone or cone of tautological pseudoeffective cycles of dimension $n$.

\vspace{0.1cm}

The problem of studying $\Psefft_n(C_d)$ and 
its dual for large values of $d$ was raised explicitly in \cite[\S 6]{DELV}. In this breakthrough paper, the authors have answered negatively an old question of Grothendieck by finding examples of smooth projective varieties (indeed special abelian varieties) possessing nef cycles that are not pseudoeffective \cite[Cor. 2.2, Prop. 4.4]{DELV} and nef cycles of complementary dimensions whose intersection is negative \cite[Cor. 4.6]{DELV}. Section 6 of loc. cit. contains lots of theoretical questions and quests for computations of nef and pseff cycles on special varieties (among which 
are the symmetric \color{black} products of curves). This has generated a fairly big amount of recent  literature on cycles of intermediate dimension, both on theoretical aspects (\cite{fl2}, \cite{leh1}, \cite{fl4}, \cite{fl1}, \cite{Ott1}, \cite{Ott3}, \cite{lx1}, \cite{lx2}) and on explicit computations (\cite{Ful}, \cite{CC}, \cite{Li}, \cite{Ott2}, \cite{sch}, \cite{CLO}). 

\subsection{Previous results: the case of divisors and curves}\label{S:oldresults}



The situation is well-understood for the cone of tautological effective divisors and the cone of tautological effective curves, at least if $d$ is large with respect to the genus $g$. Assuming from now on that $g\geq 1$ and $d\geq 2$ (in order to avoid the easy cases $C_1=C$ and $(\PP^1)_d=\PP^d$), 
the spaces $R^1(C_d)$ and $R_1(C_d)$ have dimension two, hence $\Psefft^1(C_d)$ and $\Psefft_1(C_d)$ (and their duals) 
have two extremal rays. Let us summarize what is known about the two extremal rays in each of the two cases. 

One extremal ray of $\Psefft^1(C_d)$ (which is also an extremal ray of $\Eff^1(C_d)$) is generated  for any $d\geq 2$ by the class $[\Delta]=2(d-1)![(d+g-1)x-\theta]$ of the big diagonal
$$\Delta=\{D\in C_d\: : \: D \text{ is non reduced} \},$$  
as proved by the second author in \cite[Thm. 3]{Kou}. 

The other extremal ray of $\Psefft^1(C_d)$ is generated by $\theta$ if and only if $d\geq g+1$: indeed, $\theta$ is always pseudoeffective (being nef) and it is not in the interior of the pseudoeffective cone, i.e. it is not big, if and only if $\alpha_d$ is not birational into its image, which happens exactly when $d \geq g+1$. 
By duality, if $d\geq g+1$ the tautological nef cone of curves has the following two extremal rays: 
$\bbR_{\geq 0} (dgx^{d-1}-(d-1)\theta x^{d-2})=(\bbR_{\geq 0} [\Delta])^\perp$ 
and 
$\bbR_{\geq 0} (-(g-1)x^{d-1}-x^{d-2}\theta)=(\bbR_{\geq 0} \theta)^{\perp}$.
For smaller values of $d$, we can define the pseudoeffective slope as 
$$\mu_d(C) = \max\left\{m\geq 0\left|\theta-mx\in \Psefft^1(C_d)\right.\right\}$$
so that  
$\bbR_{\geq 0} (\theta-\mu_{d}(C)x)$
is the extremal ray of $\Psefft^1(C_d)$ different from the one generated by the class of the big diagonal.

The previous result can be rephrased by saying that $\mu_{d}(C)>0$ if and only if $d\leq g$. Using the pull-back map $i_{p}^*:R^1(C_{d+1})\to R^1(C_d)$ with respect to any inclusion $i_{p}:C_d\to C_{d+1}$ obtained by sending $D$ into $D+p$ for a fixed point  $p\in C$, one easily gets 
that $\mu_{d}(C)$ is non-increasing in $d$. 
The  pseudoeffective slope $\mu_{d}(C)$ in the range $2\leq d\leq g$ has been subject to an extensive investigation; we refer to Remark \ref{R:oldresults} and to the references therein for detailed results.

A similar picture does hold for $\Psefft_1(C_d)$. One extremal ray of $\Psefft_1(C_d)$ (which is also an extremal ray of $\Eff_1(C_d)$) is generated  for any $d\geq 2$ by the class $[\delta]=d((1+g(d-1))x^{d-1}-(d-1)x^{d-2}\theta)$ of the small diagonal
$$\delta=\{d p\: : \: p\in C\},$$
as proved by Pacienza in \cite[Lemma 2.2]{Pac}. By duality, one of the extremal rays of 
$\Psefft_n(C)^\vee$ is 
$\bbR_{\geq 0} (dgx-\theta)=(\bbR_{\geq 0} [\delta])^\perp$. The other extremal ray 
is generated by $\theta$ if and only if $d\geq \gon(C)$ where $\gon(C)$ is the gonality of $C$: indeed, $\theta$ is always nef and it is not 
ample, if and only if $\alpha_d$ is not a finite morphism, which happens exactly when $d \geq \gon(C)$. Applying again the duality, we find that the other extremal ray of $\Psefft_1(C_d)$ is equal to 
$\bbR_{\geq 0} (-(g-1)x^{d-1}+\theta x^{d-2})=(\bbR_{\geq 0} \theta)^{\perp}$. For smaller values of $d$, we can define the nef slope $\nu_{d}(C)$ as  
$$\nu_d(C) = \max\left\{m\geq 0\left|\theta-mx\in \Psefft_n(C)^\vee\right.\right\},$$ 
so that  
$\bbR_{\geq 0} (\theta-\nu_{d}(C)x)$ is the extremal ray of 
$\Psefft_n(C)^\vee$ different from the one generated by  $dgx-\theta$.
Note that $\nu_d(C)\leq \mu_d(C)$ since 
$\Psefft_n(C)^\vee \subseteq \Psefft^1(C_d)$.
 The previous result can be rephrased by saying that $\nu_{d}(C)>0$ if and only if $d<\gon(C)$. Using the push-forward map $(i_{p})_*:R_1(C_{d+1})\to R_1(C_d)$ as before, one easily gets that $\nu_{d+1}(C)\leq \nu_{d}(C)$, i.e. that $\nu_{d}(C)$ is non-increasing in $d$. Observe that if $d=2$, then the pseudoeffective slope determines the nef slope (and vice versa) via the formula $\nu_2(C)=g\frac{\mu_2(C)-g+1}{\mu_2(C)-g}$. On the other hand, if $d>2$ then not much is known on  the nef slope $\nu_d(C)$ except for the following nice result:
if $C$ is a general curve of even genus $g$ then $\nu_{g/2}(C)=2$ by \cite[Thm. 1.1]{Pac}.




\subsection{Our results}

The aim of this paper and its sequel \cite{BKLV} is to extend 
some of \color{black}the above mentioned results to cycles of intermediate dimension.  


\vspace{0.1cm}

In the present paper, we generalize the extremality properties of the big and small diagonals to the intermediate diagonals, which are defined as it follows.
For any $1\leq n \leq d$, let $C^n$ be the
$n$-fold ordinary product
and, for any partition $\un a=(a_1,\ldots, a_n)\in \bbN^{n}$ of $d$ with $a_1\geq a_2\geq \ldots\geq a_n\geq 1$ and $\sum_{i=1}^n a_i=d$, consider 
the diagonal map
\begin{equation}\label{E:diag}
\begin{aligned}
\phi_{\un a}: C^n & \longrightarrow C_d \\
(p_1,\ldots, p_n) & \mapsto \sum_{i=1}^n a_i p_i.
\end{aligned}
\end{equation}
 

The cycle theoretic image of $\phi_{\un a}$ will be denoted by $\Delta_{\un a}$ and its class $[\Delta_{\un a}]$ belongs to the tautological ring $R^*(C_d)$; see Proposition \ref{diagclass} for an explicit formula which simplifies the classical formula in \cite[Chap. VIII, Prop. 5.1]{GAC1}. 

Our first result concerns the structure of the cone $\D_n(C_d)\subset \Efft_n(C_d)$ generated by the $n$-dimensional diagonals, which is called the \emph{$n$-dimensional diagonal cone}. To this aim, we introduce the balanced diagonals, 
defined as follows. For any $1\leq j \leq s(n):=\min\{n,d-n\}$, we denote by $\un \lambda^j$ the most balanced partition of $d-n$ in exactly  $j$ parts (see \eqref{balpart} for a more precise definition)  and we consider it as an element of $\bbN^n$ by adding zeros at the end. 
The element $\un \lambda^j=(\un \lambda^j_1,\cdots, \un \lambda^j_n)\in \bbN^n$ gives rise to a partition of $d$ in exactly $n$ parts $\un{\lambda^j+1}:=(\un \lambda^j_1+1,\cdots, \un \lambda^j_n+1)$, whose associated $n$-dimensional diagonal $\Delta^{\rm bal}_{j, n}:=\Delta_{\un{\lambda^j+1}}$ is called the \emph{$j$-th balanced diagonal}.


\begin{theoremalpha}\label{T:A}(= Theorem \ref{T:diagcone}) 
For $1\leq n\leq d-1$ and $g\geq 1$, the $n$-dimensional diagonal cone $\D_n(C_d)$  is a  rational polyhedral cone of dimension $r(n)
:=\min\{g,n,d-n\}\color{black}$ (hence it has codimension one in $\Efft_n(C_d)$ and in $\Psefft_n(C_d)$), whose extremal rays are generated by: 

\begin{itemize}
\item $\Delta^{\rm bal}_{1,n}, \ldots,  \Delta^{\rm bal}_{s(n), n}$ if $r(n)\geq 4$;
\item $\Delta^{\rm bal}_{1,n}, \{ \Delta^{\rm bal}_{j,n}\: :\: j\:  \text{is a $(d-n)$-break} \}, \Delta^{\rm bal}_{s(n), n}$ if $r(n)=3$, where a number $2\leq j \leq s(n)-1$ is called a 
$(d-n)$\emph{-break} \color{black}if 
$\left\lceil \frac{d-n}{j-1}\right\rceil >\left\lfloor \frac{d-n}{j+1}\right\rfloor+1$;
\item $ \Delta^{\rm bal}_{1,n}$ and $\Delta^{\rm bal}_{s(n), n}$ if $r(n)=2$.
\end{itemize}

In particular,  if $s(n)=r(n)$ (which happens if and only if $n\leq g$ or $d-n\leq g$) then $\D_n(C_d)$ is simplicial and each balanced diagonal generates an extremal ray of $\D_n(C_d)$. 
\end{theoremalpha}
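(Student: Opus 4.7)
The cone $\D_n(C_d)$ is generated by the finitely many classes $[\Delta_{\un a}]$ as $\un a$ ranges over partitions of $d$ into exactly $n$ parts, so its rational polyhedrality is automatic; the task is to compute its dimension and list its extremal rays.

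\textbf{Step 1 (coordinates of the diagonals).} My first step would be to use Proposition~\ref{diagclass} to expand each $[\Delta_{\un a}]$ in the tautological basis $\{x^{d-n-i}\theta^{i}\}_{0 \le i \le r(n)}$ of $R_n(C_d)$, yielding a coordinate vector $(c_0(\un a),\ldots,c_{r(n)}(\un a))\in \bbR^{r(n)+1}$ whose entries are explicit symmetric functions of $\un a$. Since $R_n(C_d)$ has dimension $r(n)+1$ and $\Psefft_n(C_d)$ is full-dimensional in it, the claims $\dim \D_n(C_d) = r(n)$ and that $\D_n(C_d)$ has codimension one in $\Psefft_n(C_d)$ reduce to exhibiting a single linear relation satisfied by every coordinate vector $(c_0(\un a),\ldots,c_{r(n)}(\un a))$. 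Such a relation should be directly visible from the explicit formula (arising, for instance, by pushing forward along $\alpha_d$, which collapses one tautological direction).

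\textbf{Step 2 (reduction to balanced partitions).} Next I would establish the inclusion $\D_n(C_d) \subseteq \cone\{[\Delta^{\rm bal}_{1,n}],\ldots,[\Delta^{\rm bal}_{s(n),n}]\}$. I would proceed by induction along a partial order on partitions of $d$ with $n$ parts measuring how far $\un a$ is from being balanced: an ``unbalancing move'' replaces $\un a$ by $\un a'$ obtained by transferring one unit from a smaller part to a larger one. Using the formula from Step~1, I would show that $[\Delta_{\un a}]$ is a positive linear combination of $[\Delta_{\un a'}]$ (with $\un a'$ strictly more balanced) and of diagonals whose partitions have fewer distinct values, themselves handled by a secondary induction. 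Iterating reduces every diagonal class to a positive combination of balanced ones $\Delta^{\rm bal}_{j,n}$.

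\textbf{Step 3 (which balanced diagonals are extremal).} Finally I would analyze the $s(n)$ balanced diagonals inside the $(r(n)-1)$-dimensional affine chart of Step~1 and decide which are extremal. When $r(n)=2$ the cone is planar and only the two outermost diagonals $\Delta^{\rm bal}_{1,n}$ and $\Delta^{\rm bal}_{s(n),n}$ can be extremal. When $r(n)\ge 4$, a Vandermonde-type computation on the coefficients $c_i$ of consecutive balanced partitions would show that any $r(n)$ of them are affinely independent, forcing all $s(n)$ to be extremal. The delicate middle case $r(n)=3$ is the main obstacle: I would explicitly compute the coordinates of three consecutive balanced diagonals $\Delta^{\rm bal}_{j-1,n},\Delta^{\rm bal}_{j,n},\Delta^{\rm bal}_{j+1,n}$ in the affine chart, and translate the assertion that $\Delta^{\rm bal}_{j,n}$ lies strictly outside the segment through its neighbors into a floor/ceiling inequality. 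A careful arithmetic manipulation of the sizes of the parts of the balanced partitions of $d-n$ into $j-1$, $j$, and $j+1$ parts should identify this geometric condition with exactly the $(d-n)$-break condition $\lceil (d-n)/(j-1)\rceil > \lfloor (d-n)/(j+1)\rfloor + 1$. This combinatorial identification is the crux of the proof.
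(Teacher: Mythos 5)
Your Steps 1 and 2 follow the paper's route. The paper's Lemma~\ref{newbasis} chooses a basis $\{w_s\}$ of $R_n(C_d)$ in which the normalized diagonal class has coordinates exactly $(1,\,d-n,\,\sigma_2(\un\lambda),\ldots,\sigma_{r(n)}(\un\lambda))$, so that after passing to a bounded section the diagonal cone becomes the polytope $\Pi(d-n,s(n),r(n))$ in elementary-symmetric coordinates (Corollary~\ref{C:secdiag}); and your ``rebalancing move'' is precisely the exchange argument used in the proof of Sublemma~\ref{posfunct} (the integral Chebyshev-type inequality) that underlies the reduction to balanced diagonals in Lemma~\ref{elemfunc}.

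The genuine gap is in Step 3 for $r(n)\geq 4$. Your claim that any $r(n)$ of the balanced diagonal classes are affinely independent in the $(r(n)-1)$-dimensional affine section is false. Lemma~\ref{L:face-r3} of the paper shows that whenever $j_1<\cdots<j_l$ satisfy $\lceil (d-n)/j_1\rceil=\lfloor (d-n)/j_l\rfloor+1$ (equivalently, the integer quotient $\lfloor(d-n)/j\rfloor$ is constant over this range), the $l$ points $\Sigma_{\leq 4}(\un\lambda^{j_1}),\ldots,\Sigma_{\leq 4}(\un\lambda^{j_l})$ all lie in a common $2$-dimensional affine plane of $\R^3$. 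For $d-n$ large and the quotient small there are arbitrarily many such $j$, so for every $r(n)\geq 4$ there exist values of $d-n$ for which some $r(n)$-element subsets of the balanced images are affinely dependent; this is why the paper observes that $\Pi(t,s,r)$ can be non-simplicial. Moreover, even if that affine independence held, your inference from it is not valid: five points in $\R^3$ can have all $\binom{5}{4}$ subsets affinely independent while one of them lies in the interior of the simplex on the other four (take a regular tetrahedron and its centroid), so ``any $r$ affinely independent'' does not force ``all $s$ convex-independent.'' The argument that actually closes this case, in the paper's proof of Proposition~\ref{P:vertpoly}, projects $r\geq 5$ down to $r=4$, then combines the $r=3$ polygon analysis (Lemma~\ref{L:vanishdet}) to pin down which balanced diagonals could participate in a putative convex dependence, and Lemma~\ref{L:face-r3} to show that those remaining points are the vertices of a convex polygon in their common $2$-plane, hence convex-independent. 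You would need to rebuild Step~3 on this structure rather than on a Vandermonde-type affine-independence claim.
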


Our second main result 
is the following theorem, which says that $\D_n(C_d)$ is a face of the cone of (tautological or not) pseudoeffective cycles and it collects the convex-geometric properties of this face (see the Appendix \ref{B} for the notion of perfect 
face \color{black}and locally finitely generated 
cone\color{black}).

\begin{theoremalpha}\label{T:B}
For any integers $g, n, d$ such that $1\leq n \leq d-1$ and $g\geq 1$, we have: 
\begin{enumerate}
\item \label{T:B1}
The $n$-dimensional diagonal cone $\D_n(C_d)$ is a rational polyhedral perfect face of $\Pseff_n(C_d)$ (and hence also of $\Eff_n(C_d)$), the faces of which are still perfect in 
$\Pseff_n(C_d)$.
\item \label{T:B2} 
$\Pseff_n(C_d)$ is locally finitely generated at every non-zero element of $\D_n(C_d)$.
\end{enumerate}
The same conclusions hold for $\Psefft_n(C_d)$ and $\Efft_n(C_d)$, and moreover $\D_n(C_d)$ is a facet (i.e. face of codimension one) of $\Psefft_n(C_d)$. 
\end{theoremalpha}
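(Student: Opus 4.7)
The plan has three stages: first, to show that $\D_n(C_d)$ is a face of $\Pseff_n(C_d)$; second, to verify perfectness for $\D_n(C_d)$ and each of its subfaces; and third, to invoke a convex-geometric criterion from Appendix~\ref{B} to obtain local finite generation.

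The heart of the argument is the first stage, where I would construct an explicit nef class $\omega\in\Nef^n(C_d)$ that supports $\D_n(C_d)$. The natural candidate exploits the factorization
$$
C^n \xrightarrow{\phi_{\un a}} C_d \xrightarrow{\alpha_d} \Pic^d(C),
$$
whose composite has image of dimension at most $\min(n,g)$, so that pullbacks of sufficiently high powers of $\Theta$ to $C^n$ vanish. Combined with the explicit formula for $[\Delta_{\un a}]$ (Proposition~\ref{diagclass}) and the relations in the tautological ring, this suggests $\omega$ of the form $\sum c_i x^{n-i}\theta^i$ with coefficients chosen to vanish on every balanced diagonal. Nef-ness would then follow from writing $\omega$ as a non-negative combination of products of the ample class $x$ and the nef class $\theta$. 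To see that $\omega$ cuts out \emph{exactly} $\D_n(C_d)$ and not a larger face of $\Pseff_n(C_d)$, I would argue that any irreducible effective $n$-cycle $Z\subset C_d$ with $\omega\cdot Z=0$ must have Abel--Jacobi image of dimension strictly less than $n$, which forces $Z$ to lie in a diagonal locus. In the tautological setting, where $\D_n(C_d)$ has codimension one in $\Psefft_n(C_d)$ by Theorem~\ref{T:A}, this argument gives the facet claim directly.

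For the second stage, the criterion of Appendix~\ref{B} translates perfectness of a face $G\subseteq \D_n(C_d)$ inside $\Pseff_n(C_d)$ into the existence of a nef class $\omega_G$ vanishing on $G$ and strictly positive on $\Pseff_n(C_d)\setminus G$. Since Theorem~\ref{T:A} describes the extremal rays of $\D_n(C_d)$ explicitly as balanced diagonals $\Delta^{\rm bal}_{j,n}$, every such face $G$ corresponds to a subset of those rays, and I would construct $\omega_G$ by perturbing $\omega$ by a non-negative combination of tautological classes $x^i\theta^j$ chosen so that $\omega_G$ remains nef, still vanishes on the balanced diagonals indexed by $G$, and is strictly positive on the remaining ones. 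The linear independence of the extremal rays from Theorem~\ref{T:A} makes this a combinatorially tractable construction.

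The third stage is formal: Appendix~\ref{B} provides the principle that a rational polyhedral perfect face whose subfaces are all perfect gives rise to local finite generation of the ambient cone at every non-zero point of the face; applying it to $F=\D_n(C_d)\subset K=\Pseff_n(C_d)$ yields part~(\ref{T:B2}). The tautological analogues follow by intersecting all the data with the subspace $R_n(C_d)$. The main obstacle will be stage one, and specifically ruling out non-tautological effective cycles on which $\omega$ vanishes: once $\omega$ is in hand, stage two is largely combinatorial and stage three is abstract convex geometry.
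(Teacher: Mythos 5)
Your overall plan follows the right architecture (find a nef functional supporting the diagonal cone, then feed it into the convex-geometric machinery of Appendix~\ref{B}), but stage one has two concrete gaps that would make the argument fail as described, and these gaps are precisely where the paper's real work lies.

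First, the proposed nef class cannot be built the way you suggest. Since $R^n(C_d)$ has dimension $r(n)+1$ and $\D_n(C_d)$ has dimension $r(n)$ (Theorem~\ref{T:A}), the tautological functionals vanishing on all diagonals form a one-dimensional space, spanned by the class the paper calls $\eta_{n,d}=\frac{dg}{n}x^n-\theta x^{n-1}$. This class has a \emph{negative} coefficient on the basis $\{x^n,\theta x^{n-1}\}$, so it is not a non-negative combination of products of $x$ and $\theta$; your route to nef-ness via ``a non-negative combination of products of the ample class $x$ and the nef class $\theta$'' is therefore closed off from the start. Establishing that $\eta_{n,d}$ is nef is in fact the hard content of Theorem~\ref{T:eta}, proved by an induction on $d$ that uses the push-pull operators of \S\ref{S:pushpull} and the length stratification of cycles (Lemma~\ref{L:Bcycle}), none of which appears in your proposal.

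Second, your criterion for identifying $\D_n(C_d)=\eta_{n,d}^\perp\cap\Pseff_n(C_d)$ is wrong: a low-dimensional Abel--Jacobi image does not force a cycle to be a diagonal. For instance, a positive-dimensional linear system $|L|\subset C_d$ (which exists once $d\geq g+1$) contracts to a point under $\alpha_d$ yet is not a diagonal. The correct statement, and the one the paper actually proves, is the uniform strict-positivity inequality $(\eta_{n,d}-c_{n,d}x^n)\cdot[S]\geq 0$ for every irreducible non-diagonal $S$, with $c_{n,d}>0$ when $g\geq 2$ (Theorem~\ref{T:eta}\eqref{T:eta2}). This uniform gap is not merely technical: it is exactly the hypothesis~(iv) of Proposition~\ref{edges1} in Appendix~\ref{B}, and without it neither the local finite generation of part~\eqref{T:B2} nor the ``all subfaces perfect'' statement of part~\eqref{T:B1} can be deduced. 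Your stage two (``perturb $\omega$ to $\omega_G$'') is an unnecessary detour once the gap is in hand: the paper gets perfectness of every subface for free as part~(d) of Proposition~\ref{edges1}, by trapping $\Pseff_n(C_d)$ between $\D_n(C_d)$ and an auxiliary polyhedral cone. So the missing idea is the quantitative strengthening of nef-ness, and the missing technique is the push-pull induction proving it.
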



Remark that the above theorem holds true for any curve $C$, regardless of the dimension of $\Pseff_n(C_d)$, which instead varies according  to the Hodge theoretic speciality of the curve $C$ (namely the dimension of the numerical space of cycles of its Jacobian, see Theorem \ref{T:tautring}). 

Theorem \ref{T:B} follows from Corollary \ref{diag-poly}, whose proof is based on Theorem \ref{T:eta}. In this theorem, we consider 
the tautological class 
$$\eta_{n,d} =\frac{dg}{n} x^n-\theta x^{n-1}\in R^n(C_d)$$ 
and we prove that 
\begin{itemize}
\item  if $S$ is a $n$-dimensional diagonal in $C_d$, then $[S]\cdot \eta_{n,d}=0$;
\item  there exists a constant $c_{n,d}\geq 0$, which is positive for $g\geq 2$, such that for any  $n$-dimensional irreducible subvariety $S$ of $C_d$ which is  not a diagonal we have that 
\begin{equation}\label{eq:eta'}
(\eta_{n,d}-c_{n,d}x^n) \cdot [S] \geq 0.
\end{equation}
\end{itemize}
Using the above two facts, the proof of Theorem \ref{T:B} follows if $g\geq 2$ (the case of $g=1$ being easy since all the cones are two dimensional, see  Example \ref{genus1}) from a general result in convex geometry that we prove in Proposition \ref{edges1} of the Appendix \ref{B}. 
The geometrical picture corresponding to Theorem \ref{T:A} and Theorem \ref{T:B} is represented in Figure \ref{uno}.

\begin{figure}
\begin{tikzpicture}[scale=1]                                    

\begin{scope}
\clip (3,1) -- (6,4) -- (8.25,4.25) .. controls (10.5,4.5) and (11.8,3.5) .. (12,3) -- (9,0.5) .. controls (7.5,0) and (6,0) .. (4.5,0.5) -- cycle;
\fill[black!10!white] (0,-0.5) rectangle (15.5,5);
\end{scope}

\begin{scope}
\clip (1.5,-0.5) -- (7,5) -- (9,5) -- (3.5,-0.5) -- cycle;
\fill[black!25!white] (3,1) -- (6,4) -- (8.25,4.25) .. controls (10.5,4.5) and (11.8,3.5) .. (12,3) -- (9,0.7) -- (9,0.5) .. controls (7.5,0) and (6,0) .. (4.5,0.5) -- cycle;
\end{scope}

\draw[very thin] (0,-0.5) -- (10,-0.5);	
\draw[very thin] (5.5,5) -- (15.5,5);	
\draw[very thin] (0,-0.5) -- (5.5,5);	
\draw[very thin] (10,-0.5) -- (15.5,5);	

\draw[very thin] (1.5,-0.5) -- (7,5) node[label=80:$\eta_{n,d}^{\perp}$] {} ;	
\draw[very thin] (3.5,-0.5) -- (9,5) node[label=80:$(\eta_{n,d}-c_{n,d}x^n)^{\perp}$] {} ;	

\draw[very thick] (3,1) node [fill, color=black, shape=circle, scale=0.4] (BD1) {} -- node[midway,sloped,above] {diagonal cone} (6,4) node [fill, color=black, shape=circle, scale=0.4] (BD3) {}  ;	
\path (3.5,1.5) node [shape=circle,scale=0.4,draw,fill=white] {} ;
\path (4.5,2.5) node [shape=circle,scale=0.4,draw,fill=white] {} ;
\path (5.25,3.25) node [shape=circle,scale=0.4,draw,fill=white] {} ;

\draw[very thick] (9,0.5) -- node[midway,sloped,below] {Abel-Jacobi faces} (12,3); 

\draw[very thick, densely dotted] (3,1) -- (4.5,0.5);
\draw[very thick, densely dotted] (6,4) -- (8.25,4.25);
\draw[very thick, densely dotted]  (8.25,4.25) .. controls (10.5,4.5) and (11.8,3.5) .. (12,3);
\draw[very thick, densely dotted]  (4.5,0.5) .. controls (6,0) and (7.5,0) .. (9,0.5);

\path (6,-5) coordinate (V) ;
\draw[dashed] (6,4) -- (V);
\draw[dashed] (8.25,4.25) -- (V);
\begin{scope}
\clip (0,-0.5) -- (13,-0.5) -- (13,5) -- (0,5) -- cycle;
\draw[dashed] (3,1) -- (V);
\draw[dashed] (9,0.5) -- (V);
\draw[dashed] (12,3) -- (V);
\draw[dashed] (4.5,0.5) -- (V);
\end{scope}
\begin{scope}
\clip (0,-0.5) -- (10,-0.5) -- (10,-5) -- (0,-5) -- cycle;
\draw (3,1) -- (V);
\draw (9,0.5) -- (V);
\draw (12,3) -- (V);
\draw (4.5,0.5) -- (V);
\end{scope}

\draw (7,2) node[label=$\mathrm{Pseff}_n(C_d)$] {};

\end{tikzpicture}
\caption{The colored area in the picture represents a hyperplane section of the cone $\mathrm{Pseff}_n(C_d)$.  
The dark grey area lying between the hyperplanes $\eta_{n,d}^{\perp}$ and $(\eta_{n,d}-c_{n,d}x^n)^{\perp}$ does not contain irreducible cycles, according to \eqref{eq:eta'}.
The dotted lines denote the boundaries we cannot describe.
The bold line on the left represents the diagonal cone. In particular, the black dots denote rays spanned by balanced diagonals, which are all extremal as soon as $r(n)\geq 4$. 
The white dots represent rays spanned by ``non-balanced'' diagonals, which are never extremal.
The bold line on the right denotes the Abel-Jacobi faces, 
described in \cite{BKLV}.}
\label{uno}
\end{figure}
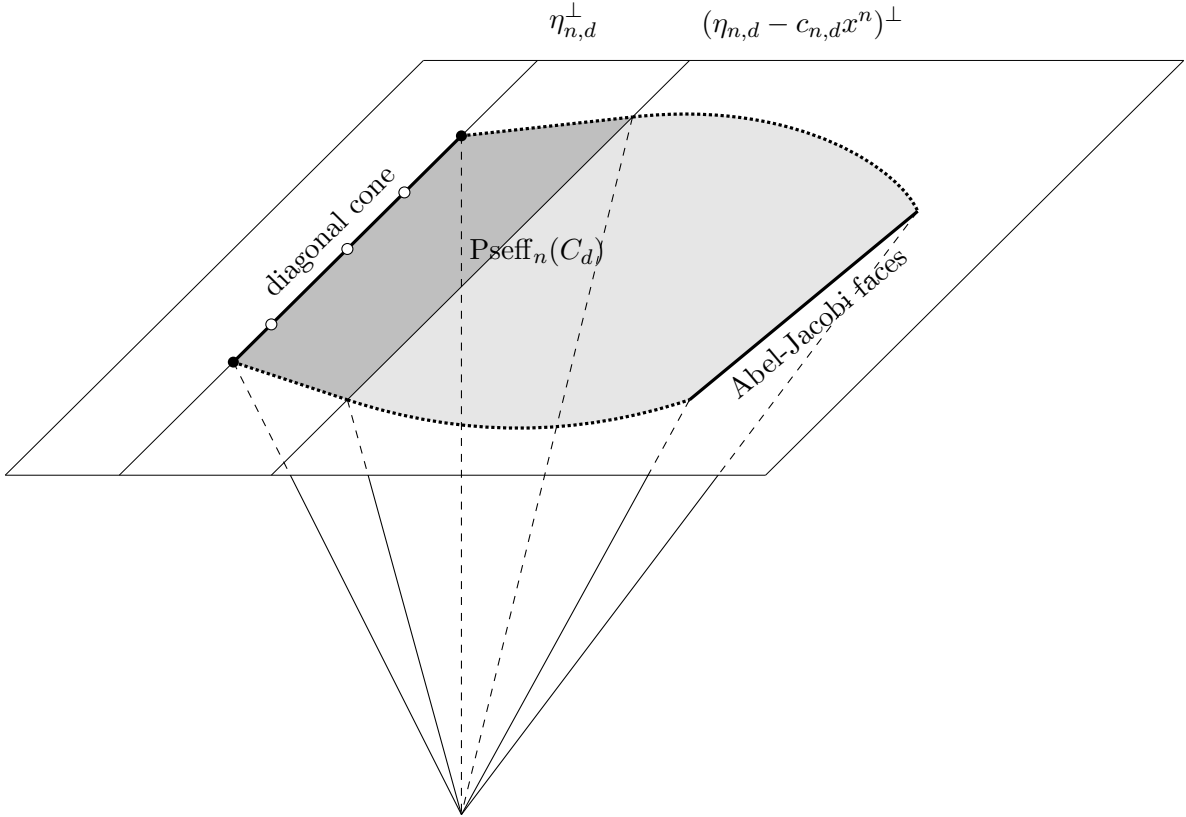

Another interesting corollary of Theorem \ref{T:eta} is the following \emph{numerical rigidity} of the diagonal cone (see Corollary \ref{diag-rig}): if $S$ and $T$ are two effective $n$-cycles such that $[S]=[T]\in N_n(C_d)$ and $S$ is a positive linear combinations of diagonal cycles then 
so is $T$.

\vspace{0.1cm}

The present work leaves open some natural questions.


\begin{question}
If $4\leq r(n)<s(n)$ (which is equivalent to $4\leq g<\min\{n, d-n\}$), what is the face lattice of the $n$-dimensional diagonal cone $\D_n(C_d)$? 
\end{question}
If $d-n$ is a multiple of every integer $2\leq j\leq n$, then $\D_n(C_d)$ is a cone over the cyclic polytope (see Example \ref{exa:rbigs}). However this is not true in general: Lemma \ref{L:face-r3} implies that for $r(n)\geq 4$ a bounded section of $\D_n(C_d)$ can be a non-simplicial polytope (in particular, it is not the cyclic polytope).


\begin{question}
Is $\Psefft_n(C_d)$ or $\Pseff_n(C_d)$ locally polyhedral at any non-zero point of $\D_n(C_d)$? 
\end{question}
Note that a necessary condition for $\Pseff_n(C_d)$ to be locally polyhedral at any non-zero point of $\D_n(C_d)$ is to be locally finitely generated in such points and that $\D_n(C_d)$ and all its faces are perfect faces of $\Pseff_n(C_d)$ (and similarly for $\Psefft_n(C_d))$, and both these necessary conditions are satisfied according to Theorem \ref{T:B}.  





\section{Preliminaries}

\subsection{Notations and conventions}

Unless otherwise specified, we work throughout the paper over an algebraically closed field $k$ 
of arbitrary characteristic.


For any natural number $n\in \bbN$ and any real number $r\in \R$, we set 
$$\binom{r}{n}=
\begin{cases}
\frac{r(r-1)\ldots (r-n+1)}{n!} & \text{if }n>0, \\
1 & \text{if } n=0.
\end{cases}
$$

\subsection{Symmetric product}

Let $C$ be a smooth projective irreducible curve of genus $g\geq 1$. For any integer $d\geq 1$, we denote by $C^d$ the 
$d$-fold ordinary product of $C$ and by $C_d$ the $d$-fold symmetric product of $C$, which parametrizes effective divisors on $C$ of degree $d$.  

The symmetric product $C_d$ is related to the Jacobian of $C$ by the 
Abel-Jacobi morphism
$\alpha_d : C_d \to \Pic^d(C)$ sending $D\in C_d$ into $\O_C(D)\in \Pic^d(C)$. 
If $L\in \Pic^d(C)$ then $\alpha_d^{-1}(L) = |L|$. 

Given $p_0\in C$, there is an inclusion $i=i_{p_0}: C_{d-1} \hookrightarrow C_d$, obtained by sending $D$ into $D+p_0$. We 
set $X_{p_0} = i_{p_0}(C_{d-1})$. 
Note that $i_{p_0}$ is compatible with 
$\alpha_d$ in the sense that  $\alpha_d\circ i_{p_0}= t_{p_0}\circ \alpha_{d-1}$, where $t_{p_0}:\Pic^{d-1}(C)\to \Pic^d(C)$ is the translation by $p_0$ which 
sends $L$ into $L(p_0)$.  

If $d\geq 2g-1$ then $\alpha_d$ is a projective bundle which we now describe. Consider the 
diagram 
$$\xymatrix{
\Delta \ar@{^{(}->}[r]& C\times C_d \ar[r]^{\hskip -.5cm \id \times \alpha_d} \ar[d]_{\pi} & C\times \Pic^d(C) \ar[r]^{\hskip .3cm \nu} & \Pic^d(C) \\
& C_d}
$$
where 
$\pi$ and $\nu$ are 
projections and $\Delta$ is the universal divisor of $C_d$. The line bundle $\L_{p_0}:=(\id \times \alpha_d)_*\O(\Delta-\pi^*X_{p_0})$ on $C\times \Pic^d(C)$
is a \emph{Poincar\'e} line bundle normalized along $p_0$, i.e. it is the unique line bundle on $C\times \Pic^d(C)$ such that   $(\L_{p_0})_{|C\times \{L\}}\cong L$ and $(\L_{p_0})_{|\{p_0\}\times \Pic^d(C)}\cong \O_{\Pic^d(C)}$.
If $d\geq 2g-1$ then 
$E_d:=\nu_*(\L_{p_0})$ is a locally free sheaf of rank $d+1-g$ (
by Riemann vanishing), which is called the \emph{Picard} bundle of degree $d$ (with respect to 
$p_0$). By construction and Riemann vanishing, we have that $(E_d)_{|L}=H^0(C, L)$ for any $L\in \Pic^d(C)$.  


\begin{fact}\label{projbund} Assume that $d\geq 2g-1$. 
\begin{enumerate}[(i)]
\item \label{projbund1} There is an isomorphism $C_d\cong \PP(E_d^*):=\Proj \left(\oplus_n \Sym^n E_d^*\right)$ over $\Pic^d(C)$ under which the line bundle $\O_{C_d}(X_{p_0})$ corresponds to $\O_{\PP(E_d^*)}(1)$.
\item \label{projbund3}The vector bundle $E_d^*$ is ample, i.e. $\O_{\PP(E_d^*)}(1)$ is ample on $\PP(E_d^*)$.
\item \label{projbund4} The vector bundle $E_d^*$ is stable with respect to the polarization $[\Theta]$ on $\Pic^d(C)$.
\end{enumerate} 
\end{fact}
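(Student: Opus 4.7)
The plan is to address each of the three parts separately, each being a classical result about the Picard bundle that I would largely reduce to the existing literature, verifying only the compatibilities internal to the setup of this paper.

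For part (i), the strategy is to exploit that, for $d\geq 2g-1$, $\alpha_d$ is a $\PP^{d-g}$-bundle whose fiber over $L\in\Pic^d(C)$ is $|L|=\PP(H^0(C,L))=\PP((E_d)_L)$, with an effective divisor $D$ corresponding to the line $\langle s_D\rangle$ spanned by its defining section. To globalize this fiber-wise identification, I would compare the two line bundles $(\id\times\alpha_d)^*\L_{p_0}$ and $\O_{C\times C_d}(\Delta-\pi^*X_{p_0})$ on $C\times C_d$: they agree on every vertical fiber $C\times\{D\}$ (both restricting to $\O_C(D)$) and on the horizontal section $\{p_0\}\times C_d$ (by the normalization of $\L_{p_0}$ along $p_0$), so they differ by a line bundle pulled back from $C_d$ which is forced to be trivial. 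Pushing forward by $\pi$, and applying cohomology-and-base-change (justified by Riemann vanishing in the range $d\geq 2g-1$), yields $\pi_*(\id\times\alpha_d)^*\L_{p_0}\cong\alpha_d^*E_d$; combining with the natural evaluation map gives a surjection $\alpha_d^*E_d\twoheadrightarrow\O_{C_d}(X_{p_0})$. By the universal property of $\PP(E_d^*)=\Proj(\Sym^\bullet E_d^*)$, this produces a morphism $f\colon C_d\to\PP(E_d^*)$ over $\Pic^d(C)$ with $f^*\O_{\PP(E_d^*)}(1)\cong\O_{C_d}(X_{p_0})$, and $f$ is an isomorphism because on each fiber it restricts to the canonical identification described above.

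For part (ii), under the identification of (i) the statement is equivalent to the ampleness of the divisor $X_{p_0}$ on $C_d$, which is a classical theorem going back to Mattuck, and which I would simply cite. Alternatively one can verify the Nakai--Moishezon criterion directly using the explicit intersection formulas in the tautological ring $R^*(C_d)$ recalled in \S\ref{SS:tautring}.

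Part (iii) is the deepest of the three assertions and constitutes the main obstacle to any self-contained treatment: the $[\Theta]$-stability of the Picard bundle $E_d^*$ is a celebrated result due to Kempf, with subsequent refinements by Ein--Lazarsfeld and others, and I would simply invoke it here since its proof is orthogonal to the techniques developed in the rest of the paper.
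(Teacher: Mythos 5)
Your proposal is correct and takes essentially the same route as the paper, which for all three parts simply cites the standard references (Mattuck and \cite[Chap.~VII, Prop.~(2.1)]{GAC1} for (i), \cite[Chap.~VII, Prop.~(2.2)]{GAC1} for (ii), and Kempf and Ein--Lazarsfeld for (iii)); your sketch of (i) merely unpacks the content of that citation via the usual see-saw plus flat base change argument. The one slip in (i) is that the surjection you feed into the universal property of $\PP(E_d^*)=\Proj\left(\oplus_n \Sym^n E_d^*\right)$ must run from $\alpha_d^*E_d^*$, not from $\alpha_d^*E_d$: it is the dual of the tautological inclusion $\O_{C_d}(-X_{p_0})\hookrightarrow\alpha_d^*E_d$ obtained by pushing forward the canonical section $\O\to\O(\Delta)$ twisted by $-\pi^*X_{p_0}$.
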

\begin{proof}
For a proof of \eqref{projbund1}, we refer the reader to 
\cite{Mat1} or to \cite[Chap. VII, Prop. (2.1)]{GAC1} 
(but be aware that we use Grothendieck's notation for projective bundles, contrary to the notation used in \cite{GAC1}).
For 
\eqref{projbund3} see  \cite[Chap. VII, Prop. (2.2)]{GAC1} (note that the result is stated in loc. cit. for $k=\bbC$ but the proof 
works over any 
algebraically closed field $k$). 
Part \eqref{projbund4} is due to Kempf \cite{Kem1} for $d=2g-1$ and to Ein-Lazarsfeld \cite{EL} for $d\geq 2g$. 
\end{proof}

\subsection{Tautological ring}\label{SS:tautring}

For any $0 \le n, m \le d$, we will denote by $N_n(C_d)$ (resp. $N^m(C_d)$) the $\R$-vector space of $n$-dimensional (resp. $m$-codimensional) cycles on $C_d$ modulo numerical equivalence. The intersection product 
induces a perfect duality $N^m(C_d)\times N_{d-m}(C_d)\stackrel{\cdot}{\longrightarrow} \R$. The vector space $N^*(C_d)=\oplus_{m=0}^d N^m(C_d)$ is a graded $\R$-algebra with respect to the intersection product. 

The \emph{tautological ring} $R^*(C_d)$ is the graded $\R$-subalgebra of $N^*(C_d)$ generated by the codimension one classes $\theta=\alpha_d^*([\Theta])$ (where $\Theta$ is any theta divisor on $J(C)$) and $x=[X_{p_0}]$ 
for some (equivalently any) 
point $p_0$. 
Observe that $\theta$ is a semiample class (because it is the pull-back of an ample line bundle via a regular morphism) and it is ample if and only if $\alpha_d$ is a finite morphism if and only if $d < \gon(C)$. 
The class $x$ is ample 
by \cite[Chap. VII, Prop. (2.2)]{GAC1} (note that the result is stated in loc. cit. for $k=\bbC$ but the proof, based on the Nakai-Moishezon criterion for ampleness, works over any algebraically closed field $k$).

The intersections among powers of $x$ and $\theta$ is governed by the following formulas.


\begin{lemma}\label{L:inters}
We have that $\theta^{g+1}=0$ and, for any integer $0\leq s\leq  d$, it holds that 
$$\theta^s \cdot x^{d-s}=
\begin{cases}
g(g-1) \cdots (g-s+1) & \text{if } s>0, \\
1 & \text{if } s=0.
\end{cases}
$$
\end{lemma}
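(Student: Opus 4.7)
The plan is to split the statement into two claims and handle them separately, using a reduction to a well-known intersection number on the Jacobian. For the vanishing $\theta^{g+1}=0$, I would simply observe that $\theta=\alpha_d^*[\Theta]$ and that $[\Theta]^{g+1}=0$ on $\Pic^d(C)$ since $\Pic^d(C)$ has dimension $g$, so pulling back gives the result immediately.

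For the intersection formula, the strategy is to reduce inductively on $d-s$ to the case $d=s$, i.e.\ to the computation of $\theta^s$ as a top class on $C_s$. To carry out the reduction, I would pick a point $p_0'\in C$ distinct from $p_0$ and use that $[X_{p_0'}]=x$ numerically, together with the inclusion $i=i_{p_0'}\colon C_{d-1}\hookrightarrow C_d$. The compatibility $\alpha_d\circ i=t_{p_0'}\circ \alpha_{d-1}$ combined with the fact that $[\Theta]$ is translation invariant modulo numerical equivalence gives $i^*\theta=\theta$; and since $X_{p_0'}\cap C_{d-1}$ (now viewing $C_{d-1}$ as the image of $i$) corresponds to divisors of the form $D+p_0'$ with $D\ni p_0'$, one has $i^*x=x$ on $C_{d-1}$ as well. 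The projection formula then yields
\[
\theta^s\cdot x^{d-s}\big|_{C_d}\;=\;\theta^s\cdot x^{d-s-1}\big|_{C_{d-1}},
\]
and iterating $d-s$ times reduces everything to the computation of $\int_{C_s}\theta^s$.

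For this base case, the key input is Poincaré's formula: the image $W_s:=\alpha_s(C_s)\subset \Pic^s(C)$ has class $[\Theta]^{g-s}/(g-s)!$ when $s\le g$, and the morphism $\alpha_s\colon C_s\to W_s$ is birational (a general effective divisor of degree $s\le g$ has a unique effective representative in its linear equivalence class, by Riemann--Roch). Pushing forward via $\alpha_s$ and using $[\Theta]^g=g!$ on $\Pic^s(C)$ gives
\[
\int_{C_s}\theta^s\;=\;\int_{\Pic^s(C)}[\Theta]^s\cdot [W_s]\;=\;\frac{[\Theta]^g}{(g-s)!}\;=\;\frac{g!}{(g-s)!}\;=\;g(g-1)\cdots(g-s+1),
\]
as required. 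In the remaining range $s>g$ the formula gives a product containing the factor $0$, which matches the vanishing $\theta^s=0$ deduced from the first claim.

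The only subtle point is the inductive step, where one must be careful to take $p_0'\neq p_0$ so that $i^*[X_{p_0}]$ is represented by a transverse divisor rather than a self-intersection; once that is arranged, the numerical invariance of both $\theta$ and $x$ under $i^*$ is routine, and the rest of the argument is a direct application of Poincaré's formula.
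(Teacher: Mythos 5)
Your argument is correct and essentially identical to the paper's: both hinge on Poincaré's formula for $W_s$ and the projection formula. The only difference is cosmetic — the paper directly identifies $x^{d-s}$ with the class of $C_s$ embedded in $C_d$ and pushes forward via $\alpha_d$, whereas you derive that same identification by an explicit induction using the inclusions $i_{p_0'}$ and then push forward via $\alpha_s$; the two computations differ only by the translation $t_{(d-s)p_0}$.
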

\begin{proof}
The fact that $\theta^{g+1}=0$ follows, via  pull-back along $\alpha_d$, from the fact that $[\Theta]^{g+1}=0$ because $\Pic^d(C)$ has dimension $g$. 
In order to prove the second relation if $0\leq s\leq \min\{{d, g\}}$, observe that $x^{d-s}$ is the class of the subvariety $C_{s}$, embedded in $C_d$ by sending $D$ into $D+(d-s)p_0$. The pushforward of $[C_{s}]$ via $\alpha_d$ is the 
class of the subvariety 
$$\widetilde{W}_s:=\{L\in \Pic^d(C): \: L(-(d-s) p_0) \: \text{ is effective}\},$$  
which is equal to $\displaystyle \frac{[\Theta^{g-s}]}{(g-s)!}$ by the Poincar\'e formula (see \cite[Appendix]{Mats} or \cite[Chap. I, \S 5]{GAC1} if $k=\bbC$). 
Therefore, if $0\leq s\leq \min\{{d, g\}}$, using the projection formula, we compute
$$\theta^s \cdot x^{d-s}= \alpha_d^*([\Theta]^s) \cdot x^{d-s}=[\Theta]^s \cdot (\alpha_d)_*(x^{d-s})=[\Theta]^s\cdot \frac{[\Theta^{g-s}]}{(g-s)!}=\frac{[\Theta]^g}{(g-s)!}=\frac{g!}{(g-s)!},
$$
where we have used that $[\Theta]^g=g!$ again by the Poincar\'e formula. 
\end{proof}


We now describe the structure of  the tautological ring. We begin by giving an explicit presentation of $N^*(C_d)$ in terms of $N^*(J(C))$, which is 
analogous 
to the presentation of the Chow ring $\CH^*(C_d)$ 
in terms of 
$\CH^*(J(C))$ 
given by Collino \cite{Col}.



\begin{thm}\label{T:tautring}
The  ring $N^*(C_d)$ admits the following presentation 
$$0\to I_d\to N^*(J(C))[T]\xrightarrow[]{\phi_d} N^*(C_d)\to 0, $$
where $\phi_d$ is the ring homomorphism 
coinciding on $N^*(J(C))$ with 
$\alpha_d^*:N^*(J(C))\to N^*(C_d)$ 
and such that $\phi_d(T)=x$, and the ideal $I_d$ is defined by 
\begin{equation}\label{E:idealI}
I_d:=
\begin{cases}
((\beta)\colon T^{2g-1-d}) & \text{if } d\leq 2g-2, \\ 
(\beta T^{d-2g+1}) & \text{if } d\geq 2g-1,
\end{cases}
\end{equation}

where 
$\beta:=\sum_{i=0}^g (-1)^i \frac{[\Theta]^i}{i!} T^{g-i}\in  N^*(J(C))[T]$.
\end{thm}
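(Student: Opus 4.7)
My plan is to prove the presentation in two cases, according as $d \geq 2g-1$ or $d \leq 2g-2$. In both cases the surjectivity of $\phi_d$ will follow from Collino's analogous presentation of the Chow ring $\CH^*(C_d)$ \cite{Col}, combined with the canonical surjection $\CH^*(C_d) \twoheadrightarrow N^*(C_d)$; the remaining task is to identify $\Ker \phi_d$.

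For $d \geq 2g-1$ I will apply Fact \ref{projbund}\eqref{projbund1}, which identifies $\alpha_d$ with the projective bundle $\PP(E_d^*) \to J(C)$ with $\O_{\PP(E_d^*)}(1) = \O(X_{p_0})$, so that $x$ is the tautological hyperplane class. The projective bundle formula then yields
\[
N^*(C_d) = N^*(J(C))[x] \Big/ \Big( \textstyle\sum_{i=0}^{d-g+1} (-1)^i c_i(E_d^*)\, x^{d-g+1-i} \Big).
\]
A standard Grothendieck--Riemann--Roch computation applied to $\nu \colon C \times J(C) \to J(C)$ and the Poincar\'e bundle $\L_{p_0}$ yields $c(E_d) = e^{-[\Theta]}$, so that $c_i(E_d^*) = [\Theta]^i/i!$ for $0 \leq i \leq g$ and vanishes for $i > g$ (by $[\Theta]^{g+1} = 0$). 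The defining relation thus becomes precisely $\phi_d(\beta\, T^{d-2g+1}) = 0$, and the projective bundle formula gives $\Ker \phi_d = (\beta\, T^{d-2g+1}) = I_d$.

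For $d \leq 2g-2$ I will use the iterated inclusion $j := i_{p_0}^{2g-1-d} \colon C_d \hookrightarrow C_{2g-1}$. The compatibility $i_{p_0}^*(x) = x$, together with the fact that translations act trivially on $N^*(J(C))$, gives $j^* \circ \phi_{2g-1} = \phi_d$, while iterating $[i_{p_0}(C_{d-1})] = x$ yields $j_*(1) = x^{2g-1-d}$ in $N^*(C_{2g-1})$. The crux is to verify
\[
\Ker\bigl(j^* \colon N^*(C_{2g-1}) \to N^*(C_d)\bigr) = \ann_{N^*(C_{2g-1})}\bigl(x^{2g-1-d}\bigr).
\]
The inclusion $\subseteq$ is immediate from the projection formula $j_*(j^*\alpha) = x^{2g-1-d} \alpha$. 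For the reverse inclusion, assume $x^{2g-1-d}\alpha = 0$, and fix $\beta \in N^*(C_d)$ of complementary degree; by surjectivity of $j^*$ write $\beta = j^*\gamma$, so that the projection formula gives
$$
j_*\bigl((j^*\alpha) \cdot \beta\bigr) = \alpha \cdot j_*\beta = \alpha \cdot x^{2g-1-d}\gamma = 0.
$$
Since $j_*$ is injective on top-degree classes, $(j^*\alpha) \cdot \beta = 0$, and non-degeneracy of the intersection pairing on $C_d$ forces $j^*\alpha = 0$. Combined with $\Ker \phi_{2g-1} = (\beta)$ from the previous case, pulling back through $\phi_{2g-1}$ yields $\Ker \phi_d = \phi_{2g-1}^{-1}(\ann(x^{2g-1-d})) = (\beta : T^{2g-1-d}) = I_d$.

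The main technical obstacle is establishing surjectivity of $\phi_d$ for $d < 2g-1$, where the projective bundle argument is unavailable; here one invokes Collino's Chow-theoretic result and passes to numerical equivalence via the canonical quotient. A secondary, computational obstacle is the careful sign-tracking in the projective bundle formula together with the Grothendieck--Riemann--Roch computation of $c(E_d)$, which must be aligned with the specific alternating-sign form of $\beta$ prescribed in the statement.
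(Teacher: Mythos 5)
Your proposal is correct and follows essentially the same route as the paper: surjectivity of $\phi_d$ from Collino's Chow-theoretic presentation, the projective bundle formula plus $c_i(E_d^*)=[\Theta]^i/i!$ for $d\geq 2g-1$, and a reduction to the case $d=2g-1$ via the iterated inclusion $j=i_{d,2g-1}$ for $d\leq 2g-2$. Your identity $\Ker(j^*)=\ann_{N^*(C_{2g-1})}(x^{2g-1-d})$ encodes the same content as the paper's Claims~1 (surjectivity of $j^*$) and~2 (injectivity of $j_*$), the latter of which you re-derive in the guise of the injectivity of $j_*$ on top-degree classes combined with non-degeneracy of the pairing; the only cosmetic difference is that the paper cites Mattuck's computation for $c(E_d)$ where you invoke Grothendieck--Riemann--Roch.
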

\begin{proof}
From the presentation of $\CH^*(C_d)$ in terms of $\CH^*(J(C))$ given in \cite[Thm. 3]{Col} and using that $N^*(C_d)$ (resp. $N^*(J(C))$) is a quotient of  $\CH^*(C_d)\otimes {\bbR}$ (resp. of $\CH^*(J(C))\otimes {\bbR}$), it follows that the morphism $\phi_d$ is surjective.  

It remains to show that the kernel of $\phi_d$ is $I_d$. For $d\geq 2g-1$, this follows by applying the projective bundle formula to $\alpha_d:C_d \cong \PP(E_d^*) \to J(C)$ (see Fact \ref{projbund}\eqref{projbund1}), using that, for any $i\geq 0$, $\displaystyle c_i(E_d^*)=[\Theta]^i / i! \in N^*(J(C))$. The latter follows by combining \cite[\S 6]{Mat2} and the Poincar\'e formula (see the proof of Lemma \ref{L:inters}). 
In order to prove that $\Ker \phi_d=I_d$ for $d\leq 2g-2$, 
consider the inclusion $i_{m,n}:C_m\hookrightarrow C_n$ (for any $m<n$) which 
sends $D\in C_m$ into $D+(n-m)p_0\in C_n$.  


\un{Claim 1:} $i_{m,n}^*:N^*(C_n)\to N^*(C_m)$ is surjective.

This follows from the fact that 
$i_{m,n}^*:\CH^*(C_n)\to \CH^*(C_m)$ 
is surjective on Chow rings by \cite[Thm. 2]{Col} 
and the fact that $N^*(C_n)$ is a quotient of  $\CH^*(C_n)\otimes {\bbR}$ (and similarly for $C_m$). 

\un{Claim 2:} $(i_{m,n})_*:N^*(C_m)\to N^*(C_n)$ is injective. 

Indeed, suppose that $S$ is a cycle on $C_m$ of dimension $k$ such that $(i_{m,n})_*([S])=0\in N_k(C_n)$. Using this and the projection formula, we get that 
$$[S]\cdot i_{m,n}^*([T]) = (i_{m,n})_*([S])\cdot [T]=0 \text{ for any  } [T]\in N^k(C_n). $$
Since $i_{m,n}^*$ is surjective by Claim 1, we deduce that $[S]\cdot [W]=0$ for any $[W]\in N^k(C_m)$, which implies that $[S]=0$ since the intersection product is non-degenerate on $N^*(C_m)$ by definition of 
the ring $N^*(C_m)$. 

We are now ready to prove the equality $\Ker \phi_d=I_d$ for $d\leq 2g-2$. Indeed, we will deduce such an equality from the  equality $\Ker \phi_{2g-1}=(\beta)$ using Claim 2. 
First of all, observe that 
\begin{equation}\label{E:2phi}
\phi_d=i_{d,2g-1}^*\circ \phi_{2g-1},
\end{equation}
 since $\alpha_d^*=i_{d,2g-1}^*\circ \alpha_{2g-1}^*$ (which follows from the equality of morphisms $\alpha_d=\alpha_{2g-1}\circ i_{d, 2g-1}$) 
and $i_{d,2g-1}^*$ sends, with a little abuse of notation, the class $x=\phi_{2g-1}(T)\in N^*(C_{2g-1})$ into $x=\phi_d(T)\in N^*(C_d)$. 
Now if $P\in N^*(J(C))[T]$ we have 
\begin{align*}
& P\in \Ker \phi_d  \Longleftrightarrow  0=\phi_d(P)=i_{d,2g-1}^*(\phi_{2g-1}(P)) & \quad \text{ by } \eqref{E:2phi}, \\
& \Longleftrightarrow 0=(i_{d,2g-1})_*(i_{d,2g-1}^*(\phi_{2g-1}(P))) &  \quad \text{ by Claim 2,} \\
& \Longleftrightarrow 0=x^{2g-1-d}\phi_{2g-1}(P)=\phi_{2g-1}(T^{2g-1-d})\phi_{2g-1}(P) & \quad \text{ by the projection formula and  } \\
& & (i_{d,2g-1})_*([1])=x^{2g-1-d}, \\ 
& \Longleftrightarrow T^{2g-1-d}P\in \Ker \phi_{2g-1}=(\beta)  \Longleftrightarrow P \in I_d.  \\ 
\end{align*}
This shows that $\Ker \phi_d=I_d$ for $d\leq 2g-2$. 
\end{proof}

Using the previous result, we can now give a standard basis of $R^m(C_d)$ for every $m$, and show that the tautological ring satisfies Poincar\'e duality.

\begin{prop}\label{basetaut}
\noindent 
\begin{enumerate}[(i)]
\item \label{basetaut1} For any $0\leq m\leq d$, set $r(m):=\min\{m,d-m, g\}$. Then
$R^m(C_d)$ has dimension $r(m)+1$ and it is freely generated by any subset of $r(m)+1$ monomials belonging to $\{x^m,x^{m-1}\theta, \ldots, x^{m-\min\{m,g\}} \theta^{\min\{m,g\}}\}$.

In particular, the monomials $\{x^m,\ldots, x^{m-r(m)}\theta^{r(m)}\}$ form a basis of $R^m(C_d)$, which is called the \emph{standard basis}. 
\item \label{basetaut2} The intersection product $R^m(C_d)\times R^{d-m}(C_d)\to \R$ is non-degenerate. 
\end{enumerate} 
\end{prop}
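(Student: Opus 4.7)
My approach is to sandwich $\dim R^m(C_d)$ between two matching bounds equal to $r(m)+1$: the lower bound comes from showing that a specific intersection pairing submatrix is non-singular, while the upper bound combines the trivial spanning bound $\min(m,g)+1$ (from $\theta^{g+1}=0$) with the projective bundle description of Fact~\ref{projbund}(i) and a descending induction on $d$. Part (ii) is then immediate from non-singularity of the same matrix, and the ``any $r(m)+1$ monomials'' refinement of (i) follows by replacing the row indices $\{0,1,\ldots,r(m)\}$ with an arbitrary increasing tuple.

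Setting $r=r(m)$, I will consider the square matrix $M$ with $M_{ij}=(x^{m-i}\theta^i)\cdot(x^{d-m-j}\theta^j)=\theta^{i+j}x^{d-i-j}$ for $0\leq i,j\leq r$; by Lemma~\ref{L:inters} this equals the falling factorial $(g)_{i+j}$ when $i+j\leq g$ and vanishes otherwise. The identity $(g)_{i+j}=(g)_i\cdot(g-i)_j$ gives a factorization $M=D\cdot N$ with $D=\operatorname{diag}((g)_0,\ldots,(g)_r)$ and $N_{ij}=(g-i)_j$. Since $(g-i)_j$ is a polynomial in $i$ of degree $j$ with leading coefficient $(-1)^j$, standard Vandermonde-type reasoning yields $\det N=(-1)^{r(r+1)/2}\prod_{k=1}^{r}k!\neq 0$, while $\det D\neq 0$ because $r\leq g$. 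The non-singularity of $M$ forces linear independence of $\{x^{m-i}\theta^i\}_{0\leq i\leq r}$ in $R^m(C_d)$, giving $\dim R^m(C_d)\geq r+1$.

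For the upper bound, the trivial inequality $\dim R^m(C_d)\leq\min(m,g)+1$ already matches $r+1$ when $m\leq d-m$. In the remaining range $m>d/2$ I split by $d$. If $d\geq 2g-1$, Fact~\ref{projbund}(i) identifies $C_d\cong\mathbb{P}(E_d^*)$ over $J(C)$; the projective bundle formula, together with $c_i(E_d^*)=[\Theta]^i/i!$ (as recalled in the proof of Theorem~\ref{T:tautring}), exhibits $R^*(C_d)$ as spanned by the set $\{\theta^i x^j:0\leq i\leq g,\ 0\leq j\leq d-g\}$, whose degree $m$ part has exactly $r(m)+1$ elements by a direct count. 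If $d<2g-1$, I proceed by descending induction on $d$, starting from the case $d=2g-1$: the push-forward $(i_{p_0})_*\colon R^m(C_d)\to R^{m+1}(C_{d+1})$ is injective by Claim 2 in the proof of Theorem~\ref{T:tautring}, and by the projection formula it sends $\theta^a x^b$ to $\theta^a x^{b+1}$, so $\dim R^m(C_d)\leq\dim R^{m+1}(C_{d+1})$; the elementary identity $r(m+1,d+1)=r(m,d)$ for $m>d/2$ then delivers $\dim R^m(C_d)\leq r(m,d)+1$ via the inductive hypothesis.

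Combining the two bounds yields (i), and (ii) follows since the standard bases of $R^m(C_d)$ and $R^{d-m}(C_d)$ pair into the non-singular matrix $M$. The refinement in (i) is obtained by rerunning the lower bound argument with indices $0\leq i_0<\cdots<i_r\leq\min(m,g)$ in place of $0,1,\ldots,r$: the factorization becomes $\operatorname{diag}((g)_{i_k})\cdot((g-i_k)_j)_{k,j}$ and the polynomial-matrix factor now has determinant proportional to the Vandermonde $\prod_{k<\ell}(i_\ell-i_k)\neq 0$. The main obstacle is the upper bound in the range $d<2g-1$ and $m>d/2$: here the trivial bound overshoots $r(m)+1$, and the ideal $I_d=((\beta):T^{2g-1-d})$ from Theorem~\ref{T:tautring} is a saturation that is awkward to handle degree by degree; the descending induction via the injective push-forward is the clean workaround, transporting the problem to the range $d\geq 2g-1$ where the projective bundle structure applies directly.
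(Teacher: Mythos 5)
Your proof is correct, and the lower bound and duality statements follow the same route as the paper: the same intersection matrix $M_{ij}=\theta^{i+j}x^{d-i-j}$, the same diagonal-times-Vandermonde factorization, and the same generalization to arbitrary index tuples $i_0<\cdots<i_r$ to get the ``any $r(m)+1$ monomials'' refinement. The genuine difference is in the upper bound when $d\leq 2g-2$. The paper's proof works directly with the presentation $R^*(C_d)=(\bbR[\theta,x]/(\theta^{g+1}))/J_d$ from Theorem~\ref{T:tautring}, where $J_d=((\alpha)\colon x^{2g-1-d})$, and explicitly constructs $m-(d-m)$ independent relations in $J_d^{(m)}$ by solving the linear system~\eqref{E:eqcoeff} for polynomials $Q(x,\theta)$ that make $Q\alpha$ divisible by $x^{2g-1-d}$. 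You instead sidestep the saturation ideal entirely by pushing forward along $i_{p_0}\colon C_d\hookrightarrow C_{d+1}$: the projection formula gives $(i_{p_0})_*(\theta^a x^b)=\theta^a x^{b+1}$ so the map lands in $R^{m+1}(C_{d+1})$, Claim~2 from the proof of Theorem~\ref{T:tautring} gives injectivity, and the elementary identity $r(m+1,d+1)=r(m,d)$ for $m>d/2$ transports the bound from $d+1$ down to $d$, with base case $d=2g-1$ handled by the projective-bundle spanning set $\{\theta^ix^j:0\le i\le g,\ 0\le j\le d-g\}$. This is cleaner and less computational than the paper's explicit relation construction; what the paper's route buys in exchange is the exact determination of the ideal $J_d$ (the equality in~\eqref{E:ideal-K}), which the authors record as a remark giving the presentation~\eqref{E:newpres-R} of the tautological ring — a byproduct your argument does not produce.
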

\begin{proof}
Set $r:=r(m)$. 
We will start by showing the following

\un{Claim:} Given two increasing sequences $i_\bullet=\{0\leq i_0<i_1<\ldots<i_r\leq \min\{m,g\}\}$ and $j_\bullet=\{0\leq j_0<j_1<\ldots<j_r\leq \min\{d-m,g\}\}$, the intersection matrix 
$$A(i_\bullet, j_\bullet)=\left(x^{m-i_k}\theta^{i_k}\cdot x^{d-m-j_h}\theta^{j_h}\right)_{0\leq k, h\leq r} \text{ is  invertible}.$$  
Indeed, according to Lemma \ref{L:inters}, we have that 
\begin{equation}\label{inters-mat}
A(i_\bullet,j_\bullet)=\left( g(g-1)\ldots (g-i_k-j_h+1)\right)_{0\leq k,h \leq r}.
\end{equation}
By definition of $r$, we 
have that either 
$r=\min\{m,g\}$ or $r=\min\{d-m,g\}$
(or both). 
In the former case we 
have that $i_\bullet=\{0=i_0<1=i_1<\ldots<i_r=r=\min\{m,g\}\}$ while in the latter case we 
have that $j_\bullet=\{0=j_0<1=j_1<\ldots<j_r=r=\min\{d-m,g\}\}$.
Assume that we are in the latter case (the proof being analogous in the former case). 
Then 
$A(i_\bullet, j_\bullet)$ becomes
$$A(i_\bullet, j_\bullet)=B(i_\bullet)=\left( g(g-1)\ldots (g-i_k-h+1)\right)_{0\leq k,h \leq r}
$$
To conclude the proof of the Claim, 
we show that 
$\det B(i_\bullet) \neq 0$.
Since the $k$-th row of $B(i_\bullet)$ is a multiple of  $g(g-1)\ldots (g-i_k+1)$, we get 
$$\det B(i_\bullet)  = \left(\prod_{0\leq k\leq r} g(g-1)\ldots (g-i_k+1) \right) \det C(i_\bullet),$$
where the matrix $C(i_\bullet)$ has the $(k,h)$-entry equal to  
$$C(i_\bullet)_{k,h} := 
\begin{cases} 
1 & \mbox{if} \ h = 0,  \\ 
(g-i_k)\ldots (g-i_k-h+1) & \mbox{if} \ h > 0.
 \end{cases}
$$
Moreover, since $i_k\leq \min\{m,g\}\leq g$ for every $0\leq k\leq r$, $\det B(i_\bullet)  \neq 0$ if and only if $\det C(i_\bullet)  \neq 0$.
A straightforward Vandermonde-type argument, shows that 
$$\det C(i_\bullet) =\prod_{0\leq k<l\leq r} \left[(g-i_l)-(g-i_k) \right]=\prod_{0\leq k<l\leq r} (i_k-i_l)\neq 0.$$ 
Hence $\det C(i_\bullet)  \neq 0$ and we are done.   

\vspace{0.1cm}


The Claim shows that $\dim R^m(C_d)\geq r(m)+1$ and that any  subset of $r(m)+1$ monomials belonging to $\{x^m,x^{m-1}\theta, \ldots, x^{m-\min\{m,g\}} \theta^{\min\{m,g\}} \}$ is linearly independent. In order to conclude the proof of part \eqref{basetaut1}, 
it remains to show that $\dim R^m(C_d)\leq r(m)+1$. 
%

With this aim, let $\R([\Theta])$ be the subring of $N^*(J(C))$ generated by $[\Theta]$. Then Theorem \ref{T:tautring} gives an exact sequence
$$0 \to I_d \cap \R([\Theta])[T] \to  \R([\Theta])[T] \to R^*(C_d) \to 0.$$
Since $\R([\Theta])[T] \cong \bbR[X,Y]/(X^{g+1})$, it follows that $R^*(C_d)$ has the following presentation
\begin{equation}\label{E:pres-R}
R^*(C_d)=\frac{\bbR[\theta,x]/(\theta^{g+1})}{J_d},
\end{equation}
where $J_d$ is the image of $I_d \cap \R([\Theta])[T]$ in $\bbR[\theta,x]/(\theta^{g+1})$. 
Moreover, in $\bbR[\theta,x]/(\theta^{g+1})$, we have:

\begin{equation}\label{E:ideal-K}
J_d\supseteq K_d:=
\begin{cases}
((\alpha)\colon x^{2g-1-d}) & \text{if } d\leq 2g-2, \\ 
(\alpha x^{d-2g+1}) & \text{if } d\geq 2g-1,
\end{cases}
\end{equation}
where $\alpha=\sum_{i=0}^g (-1)^i \frac{\theta^i}{i!} x^{g-i}$ is now considered in the 
$\bbR[\theta,x]/(\theta^{g+1})$. 

We will distinguish the two cases $d\geq 2g-1$ and $d\leq 2g-2$. 

If $d\geq 2g-1$ then  \eqref{E:pres-R} and  \eqref{E:ideal-K} imply that:
\begin{itemize}
\item if $m\leq d-g$ then $R^m(C_d)$ is generated by the elements $\{x^m, x^{m-1}\theta, \ldots, x^{m-\min\{m,g\}}\theta^{\min\{m,g\}} \}$, hence $\dim R^m(C_d)\leq \min\{m,g\}+1=r(m)+1$;
\item if $d-g<m$ (which implies that $g\leq m$) then $R^m(C_d)$ is generated by $\{x^{d-g}\theta^{m-d+g}, x^{d-g-1}\theta^{m-d+g+1}, \ldots, x^{m-g}\theta^{g} \}$, hence $\dim R^m(C_d)\leq d-m+1=r(m)+1$. 
\end{itemize}

If $d\leq 2g-2$ then we will distinguish three further cases.
\begin{itemize}
\item If $m\leq d-m$ (which implies that $m\leq g-1$) then $R^m(C_d)$ is  generated by $\{x^m,x^{m-1}\theta, \ldots, x\theta^{m-1},\theta^m\}$, hence $\dim R^m(C_d)\leq m+1=r(m)+1$.
\item If $d-m<g\leq m$  then  $R^m(C_d)$ is  generated by $\{x^m,x^{m-1}\theta, \ldots, x^{m-g+1}\theta^{g-1},x^{m-g}\theta^g\}$ and 
\eqref{E:pres-R}, \eqref{E:ideal-K} provide the following independent set of relations 
$$\frac{\theta^kx^{m-d+g-1-k}\cdot \alpha}{x^{2g-1-d}}=\sum_{i=0}^{g-k} \frac{(-1)^i}{i!}\theta^{i+k}x^{m-k-i}\in K_d\subseteq J_d \quad \text{ for } 0\leq k \leq m-d+g-1.
$$
Hence $\dim R^m(C_d)\leq g+1-(m-d+g)=d-m+1=r(m)+1$.
\item If $d-m<m<g$ then $R^m(C_d)$ is generated by $\{x^m,x^{m-1}\theta, \ldots, x\theta^{m-1},\theta^m\}$ and we will use 
\eqref{E:pres-R} and \eqref{E:ideal-K} to find $m-(d-m)$ independent relations, which will then show that $\dim R^m(C_d)\leq m+1-[m-(d-m)]=d-m+1=r(m)+1.$ 

With that in mind, consider a polynomial $Q(x,\theta)=\sum_{j=0}^{g-1-(d-m)} \beta_j \theta^jx^{g-1-(d-m)-j}\in \bbR[x,\theta]/(\theta^{g+1})$ of degree $g-1-(d-m)$ in $x$ and $\theta$. The polynomial
$$Q(x,\theta)\cdot \alpha=\sum_{\substack{0\leq i \leq g \\ 0\leq j \leq g-1-(d-m) \\ i+j\leq g}} \frac{(-1)^i \beta_j}{i!} \theta^{i+j}x^{2g-1-(d-m)-i-j}\in \bbR[x,\theta]/(\theta^{g+1})$$
is divisible  by $x^{2g-1-d}$ if and only if 
$$\sum_{\substack{0\leq i \leq g \\ 0\leq j \leq g-1-(d-m) \\ i+j=k}} \frac{(-1)^i \beta_j}{i!}= 0$$
that is if and only if 
\begin{equation}\label{E:eqcoeff}
\sum_{j=0}^{\min\{k,g-1-(d-m)\}} \frac{(-1)^{k-j} \beta_j}{(k-j)!} = 0 \ \text{ for any } \: m+1\leq k \leq g.
\end{equation}
It can be shown that the conditions \eqref{E:eqcoeff} on the coefficients $\beta_j$ of $Q(x,\theta)$ are linearly independent; 
hence the linear space $W$ of all the polynomials $Q(x,\theta)\in \bbR[x,\theta]/(\theta^{g+1})$ of degree $g-1-(d-m)$ in $x$ and $\theta$ whose coefficients $\beta_j$ 
satisfy the conditions \eqref{E:eqcoeff} has dimension $\dim W=g-(d-m)-(g-m)=2m-d=m-(d-m)$. 


The inclusion \eqref{E:ideal-K} implies that the ideal $J_d$ contains the subspace  
$$\left\{\frac{Q(x,\theta)\cdot \alpha}{x^{2g-1-d}} \: : Q(x,\theta) \in W\right\}\subseteq K_d,$$
which has dimension $m-(d-m)$. This concludes our proof. 
\end{itemize}

Finally, part \eqref{basetaut2} follows from part \eqref{basetaut1} and the above Claim. 
\end{proof}



\begin{remark}
It follows from the proof of 
Proposition \ref{basetaut} that the inclusion \eqref{E:ideal-K} is actually an equality. Hence, using \eqref{E:pres-R}, we get the following presentation of the tautological ring 
of $C_d$ (which is the analogue of the presentation of the full ring $N^*(C_d)$ given in Theorem \ref{T:tautring})
\begin{equation}\label{E:newpres-R}
R^*(C_d)=\frac{\bbR[\theta,x]/(\theta^{g+1})}{J_d}
\: \text{ with }  
J_d:=
\begin{cases}
((\alpha)\colon x^{2g-1-d}) & \text{if } d\leq 2g-2, \\ 
(\alpha\cdot x^{d-2g+1}) & \text{if } d\geq 2g-1,
\end{cases}
\end{equation}
where $\alpha=\sum_{i=0}^g (-1)^i \frac{\theta^i}{i!} x^{g-i}\in (\bbR[\theta,x]/(\theta^{g+1}))_g$ and the above ideal-theoretic operation are performed in $\bbR[\theta,x]/(\theta^{g+1})$. 

An explicit set of generators of the ideal $J_d$ in the case $d\leq 2g-2$ is given (over $\bbC$) in \cite[Chap. VII, Ex. B]{GAC1}.
\end{remark}


We end this subsection with the following result that, although it is never used in what follows, it helps in understanding the relevance of the tautological ring. 


\begin{fact}\label{verygen}
If $C$ is a very general  curve over an algebraically closed field $k$, then $R^*(C_d)=N^*(C_d)$ for every $d\geq 1$. 
\end{fact}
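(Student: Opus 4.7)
The plan is to reduce the Fact to a statement about the Jacobian and then invoke results on the numerical ring of a very general abelian variety. By Theorem \ref{T:tautring} the surjective ring homomorphism $\phi_d\colon N^*(J(C))[T]\twoheadrightarrow N^*(C_d)$ restricts to $\alpha_d^*$ on $N^*(J(C))$ and sends $T$ to $x$. Since $R^*(C_d)$ is a subring containing both $x$ and $\theta=\alpha_d^*[\Theta]$, and $\alpha_d^*$ is a ring homomorphism, the equality $R^*(C_d)=N^*(C_d)$ follows as soon as $N^*(J(C))$ is generated as an $\bbR$-algebra by $[\Theta]$. Thus the Fact reduces to showing that, for a very general curve $C$ over the uncountable algebraically closed field $k$, one has $\dim_\bbR N^m(J(C))=1$ for every $0\le m\le g$; the lower bound is automatic from the Poincar\'e formula in Lemma \ref{L:inters}, which gives $[\Theta]^m\ne 0$ in that range.

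In characteristic zero this reduced statement is classical. The monodromy of the universal family of Jacobians $\mathcal J_g\to\mathcal M_g$ acting on $H^1$ is the full $\Sp_{2g}(\bbZ)$, so for very general $C$ the Mumford-Tate group of the weight-one Hodge structure on $H^1(J(C),\bbQ)$ is the full $\mathrm{GSp}_{2g}$. Because the cohomology ring of an abelian variety is $\bigwedge^* H^1$, the $\bbQ$-subring of Hodge classes equals the ring of $\mathrm{GSp}_{2g}$-invariants, which is generated by the polarization class $[\Theta]$. Lieberman's theorem identifies numerical with homological equivalence on abelian varieties, so $N^*(J(C))\subseteq\bbR[\Theta]$ and equality holds.

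The main obstacle is positive characteristic. A direct specialization argument from characteristic zero cannot work, since the numerical ring can only grow under specialization to a special fibre. Instead, one has to study the locus in $\mathcal M_g\otimes\ov{\mathbb F}_p$ on which $\dim_\bbR N^m(J(C))$ jumps above $1$. Using $\ell$-adic cohomology and Tate-type arguments, one shows that along any non-isotrivial family of Jacobians in characteristic $p$ the image of the $\ell$-adic monodromy remains as large as possible, so that the jumping locus is a countable union of proper closed subvarieties; consequently at a very general point the dimension remains at the minimal value $1$. This uniform, characteristic-free argument is exactly the content of the appendix by Moonen, to which I would defer for the complete proof.
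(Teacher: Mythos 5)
Your proposal is correct and follows essentially the same route as the paper: reduce via Theorem \ref{T:tautring} to showing that $N^*(J(C))$ is generated by $[\Theta]$ for a very general curve, then invoke the big-monodromy/invariant-theory argument in characteristic zero (which is the content of the paper's citation to Birkenhake--Lange, Thm.\ 17.5.1) and defer to Appendix \ref{A} for arbitrary characteristic. The only minor inaccuracies are cosmetic: Lieberman's theorem is not actually needed (homological equivalence always implies numerical equivalence), and the appendix does not run a ``jumping-locus'' argument over an arbitrary non-isotrivial family but rather uses Deligne--Mumford irreducibility of $\mathscr{M}_g^{(\ell^i m)}$ to get full geometric monodromy for the universal Jacobian, together with the observation that a very general point of $\mathscr{M}_g$ over $k$ specializes the generic fibre; since you explicitly defer to the appendix for these details, this does not affect the validity of your argument.
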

Note that the statement becomes empty if $k$ is countable. 

\begin{proof}
Theorem \ref{T:tautring} implies that  $N^*(C_d)$ is a quotient of $N^*(\Pic^d(C))[x]$ (for any curve $C$ over any field $k=\ov{k}$).  Therefore, the equality $R^*(C_d)=N^*(C_d)$ holds if and only if $N^*(J(C))$ is generated by  the class $[\Theta]$ of the theta divisor. 
This last property is known to be true for a very general  curve: a proof over the complex numbers is given in  \cite[Thm. 17.5.1]{BL} and a proof  over an arbitrary field (even non algebraically closed) follows by the Appendix \ref{A}.
\end{proof}

\subsection{Push-pull operators}\label{S:pushpull}

The aim of this subsection is to recall the definition of the push and pull operators (following \cite[Chap. VIII, Exercise D]{GAC1}), which allows one to relate the tautological ring of $C_d$ with the one of $C_{d+1}$.
\begin{defi}\label{D:pushpull}
For any $d\geq 1$, consider the diagram 
$$\xymatrix{
& C\times C_d \ar[dl]_{\pi_2}\ar[dr]^{\mu} & \\
C_d & & C_{d+1}
}
$$
where $\pi_2$ is the projection 
and $\mu$ is the addition map sending 
$(p,D)$ into $p+D$.  
\begin{enumerate}
\item The \emph{push operator} $A$ is the following linear map (for any $0\leq m \leq d$) 
$$\begin{aligned}
A: N^m(C_d) & \longrightarrow N^m(C_{d+1}), \\
z & \mapsto \mu_*\pi_2^*(z).
\end{aligned}$$
\item The \emph{pull operator} $B$ is the following linear map (for any $0\leq n \leq d+1$) 
$$\begin{aligned}
B: N_n(C_{d+1}) & \longrightarrow N_n(C_d), \\
w & \mapsto (\pi_{2})_*\mu^*(w).
\end{aligned}$$
\end{enumerate}
\end{defi} 
Note that the above definitions make sense since $\pi_2$ is proper and smooth of relative dimension one, and $\mu$ is finite and flat. 

\begin{remark}\label{R:eff-pp}
A more geometric description of the maps $A$ and $B$ at the level of effective cycles is given as follows. 
\noindent 
\begin{enumerate}[(i)]
\item  \label{R:eff-ppA} For any $Z\subseteq C_d$ integral subvariety of codimension $m$, $A([Z])$ is the class of an effective codimension $m$ cycle $A(Z)$ whose support is the integral codimension $m$ subvariety 
$$\mu(\pi_2^{-1}(Z))=\{D+p\: : D\in Z, p\in C \}=\{E\in C_{d+1}\: : E\geq D\geq 0 \: \text{ for some }D\in Z\}\subseteq C_{d+1},$$
and whose multiplicity is the cardinality of the finite set $\{D\in Z\: : D\leq E \}$ for a generic point $E\in \mu(\pi_2^{-1}(Z))$. 
\item \label{R:eff-ppB} For any $W\subseteq C_{d+1}$ integral subvariety of dimension $n$, $B([W])$ is the class of a dimension $n$ effective cycle $B(W)$ of $C_d$   whose support is the union of the  
irreducible components of dimension $n$ of the reduced subvariety 
$$\pi_2(\mu^{-1}(W))=\{E-p\geq 0 \: : E\in W, p\in C \}=\{D\in C_d\: : D\leq E \: \text{ for some }E\in W\}\subseteq C_d,$$
and such that the multiplicity of a dimension $n$ irreducible component $\pi_2(\mu^{-1}(W))_i$ of $\pi_2(\mu^{-1}(W))$ is the cardinality of the finite set $\{E\in W\: : E\geq D_i\}$ for a generic point $D_i$ of 
$\pi_2(\mu^{-1}(W))_i$.
\end{enumerate}
\end{remark}

The basic properties of the push and pull operators are recalled in the following 

\begin{fact}(\cite[Chap. VIII, Exercise D]{GAC1})\label{F:pushpull}
\noindent 
\begin{enumerate}[(i)]
\item \label{F:pushpull1} For any $z\in N^m(C_d)$ and any $w\in N_m(C_{d+1})$, we have that 
$$A(z)\cdot w=z\cdot B(w).$$
In other words, if we identify $N^m(C_d)^{\vee}$ with $N_m(C_d)$ via the intersection product (and similarly for $C_{d+1}$), then $A$ and $B$ are dual maps.
\item  \label{F:pushpull2} 
The operators $A$ and $B$ preserve the tautological rings and we have that 
$$
\begin{sis}
& A(x^{\alpha}\theta^{\beta})=(d+1-\alpha-2\beta) x^{\alpha} \theta^{\beta} +\beta(g-\beta+1)x^{\alpha+1}\theta^{\beta-1}, \\
& B( x^{\alpha}\theta^{\beta})= \alpha x^{\alpha-1} \theta^{\beta}+\beta(g-\beta+1) x^{\alpha}\theta^{\beta-1}. \\
\end{sis}
$$
\end{enumerate}
\end{fact}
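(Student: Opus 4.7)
The plan is to deduce (i) directly from the projection formula, and then use the explicit geometry of $\mu$ to derive the operator formulas in (ii). For (i), since $\mu$ is finite flat and $\pi_2$ is smooth projective (both are in particular proper), the projection formula applied to each yields
$$A(z)\cdot w=\mu_*(\pi_2^*z)\cdot w=\mu_*\bigl(\pi_2^*z\cdot\mu^*w\bigr)\quad\text{and}\quad z\cdot B(w)=z\cdot(\pi_2)_*(\mu^*w)=(\pi_2)_*\bigl(\pi_2^*z\cdot\mu^*w\bigr).$$
Both sides are proper pushforwards to a point of the same zero-cycle $\pi_2^*z\cdot\mu^*w$ on $C\times C_d$, so they have equal degree.

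For (ii), set $e=\pi_1^*[p_0]\in N^1(C\times C_d)$, the class of the divisor $\{p_0\}\times C_d$; note the elementary facts $e^2=0$, $\mu_*(1)=(d+1)\cdot 1$ (since $\mu$ has degree $d+1$), and $\mu_*(e)=x$ (since $\mu$ restricts to an isomorphism $\{p_0\}\times C_d\isomarrow X_{p_0}$). The key geometric inputs are the two pullback formulas
$$\mu^*(x)=e+\pi_2^*(x),\qquad \mu^*(\theta)=g\,e+\pi_2^*(\theta)+\gamma,$$
where the first comes from the decomposition $\mu^{-1}(X_{p_0})=(\{p_0\}\times C_d)\cup(C\times X_{p_0})$ with generic transversality, and the second comes from factoring $\alpha_{d+1}\circ\mu=m\circ(\alpha_1\times\alpha_d)$ (with $m$ the group law on $J(C)$) and applying the standard theorem-of-the-square expression for $m^*[\Theta]$. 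Here $\gamma$ is the ``mixed'' correction class arising from the Poincar\'e-bundle term in $m^*[\Theta]$, and its key behaviour under $(\pi_2)_*$---in particular $(\pi_2)_*(\gamma)=(\pi_2)_*(e\cdot\gamma)=(\pi_2)_*(\pi_2^*(\xi)\cdot\gamma)=0$ for any $\xi\in R^*(C_d)$ of positive degree, together with $(\pi_2)_*(\gamma^2)=-2\theta$---is a consequence of the Poincar\'e formula on $J(C)$ (the same ingredient used in Lemma~\ref{L:inters}).

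With these inputs in hand, one expands $\mu^*(x^\alpha\theta^\beta)=(e+\pi_2^*x)^\alpha(g\,e+\pi_2^*\theta+\gamma)^\beta$ modulo $e^2=0$, applies $(\pi_2)_*$ using the projection formula together with $(\pi_2)_*(1)=0$ and $(\pi_2)_*(e)=1$, and arrives at $B(x^\alpha\theta^\beta)=\alpha x^{\alpha-1}\theta^\beta+\beta(g-\beta+1)x^\alpha\theta^{\beta-1}$. The formula for $A$ follows by an entirely parallel push-computation---writing $\pi_2^*(x^\alpha\theta^\beta)$ in terms of $\mu^*$-classes and $e$ using $e^2=0$ and then applying $\mu_*$---or, more economically, it may be read off from $B$ via the duality in (i) together with the non-degeneracy of the intersection pairing on $R^*(C_d)$ (Proposition~\ref{basetaut}(ii)) and the explicit intersection numbers of Lemma~\ref{L:inters}. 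The main technical hurdle is the bookkeeping for the correction class $\gamma$, worked out in detail in \cite[Ch.~VIII, Ex.~D]{GAC1}.
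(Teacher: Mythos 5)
The paper states this Fact purely as a citation to \cite[Chap.~VIII, Exercise~D]{GAC1} and offers no proof of its own (the subsequent Remark merely notes that the argument in \emph{loc.~cit.}\ extends from $\bbC$ to arbitrary $k = \bar k$, and that only $\beta \in \{0,1\}$ is actually used). So there is no internal proof to compare with; what you have written is a solution sketch for the cited exercise. Your argument is the standard one and is essentially correct. Part~(i) via the projection formula applied on both sides of the correspondence $C_d \xleftarrow{\pi_2} C \times C_d \xrightarrow{\mu} C_{d+1}$, reducing both $A(z)\cdot w$ and $z\cdot B(w)$ to the degree of the same $0$-cycle $\pi_2^*z \cdot \mu^*w$, is clean and correct. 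For~(ii), the pullback identities $\mu^*x = e + \pi_2^*x$ and $\mu^*\theta = g\,e + \pi_2^*\theta + \gamma$ and the subsequent expansion are indeed the intended route.

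Two points worth tightening. First, for the $B$-computation with general $\beta$ you implicitly need to control $\gamma^k$ for $k\ge 3$; the pushforward statement $(\pi_2)_*(\gamma^2) = -2\theta$ alone does not suffice, but the stronger class-level relation $\gamma^2 = -2\,e\cdot\pi_2^*\theta$ in $N^2(C\times C_d)$ (a standard consequence of the Poincar\'e formula, and what actually underlies your pushforward identity) does: it forces $\gamma^3 = -2\,e\gamma\,\pi_2^*\theta$ and $\gamma^4 = 0$, and then all higher terms die under $(\pi_2)_*$. Second, the ``more economical'' duality route to the $A$-formula is not quite self-contained as stated: knowing $A(x^\alpha\theta^\beta)\cdot w = x^\alpha\theta^\beta\cdot B(w)$ for $w$ ranging over the \emph{tautological} basis of $R_m(C_{d+1})$, together with non-degeneracy of the pairing on $R^*$, pins down $A(x^\alpha\theta^\beta)$ only modulo $R_m(C_{d+1})^\perp \subseteq N^m(C_{d+1})$. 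One must therefore first know that $A(x^\alpha\theta^\beta)$ is itself tautological, which requires the direct push-computation (or some Künneth-type argument). So the ``parallel push-computation'' you mention should be regarded as the primary proof and duality only as a consistency check. Neither issue is a serious gap, but both are exactly the sort of bookkeeping that the paper punts to \cite{GAC1}.
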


\begin{remark}
Even though the proof of the above fact in \cite{GAC1} is over the complex numbers, it is not so difficult to extend it to a curve over any algebraically closed field. Actually, in this paper we only use \eqref{F:pushpull1} and the cases $\beta = 0, 1$ of \eqref{F:pushpull2}.  
\end{remark}
\subsection{Cones of cycles}\label{cones}

Let us introduce the cones of cycles we will be working with. Inside the real vector space $N^m(C_d), 0 \le m \le d$, consider the (convex) cone $\Eff^m(C_d)$ generated by effective codimension $m$ cycles (called the cone of \emph{effective cycles}) and its closure $\Pseff^m(C_d)$ (called the cone of \emph{pseudoeffective cycles}). These cones are salient by \cite[Prop. 1.3]{BFJ}, \cite[Thm. 1.4(i)]{fl1}. The intersection $\Efft^m(C_d):=\Eff^m(C_d)\cap R^m(C_d)$ is called the \emph{tautological effective} cone and its closure  $\Psefft^m(C_d):=\ov{\Efft^m(C_d)}$ is called the \emph{tautological pseudoeffective} cone. Note that there is an inclusion $\Psefft^m(C_d)\subseteq \Pseff^m(C_d)\cap R^m(C_d)$, which a priori could be strict. 

For $0 \le n \le d$ we set $\Eff_n(C_d):=\Eff^{d-n}(C_d)$ and similarly for the other cones. 

The dual of $\Pseff^{d-m}(C_d)$ (respectively of $\Psefft^{d-m}(C_d)$) cone is  the \emph{nef} cone $\Nef^m(C_d) \subset N^m(C_d)$ (resp. the \emph{tautological nef} cone $\Neft^m(C_d)\subset R^m(C_d)$). Note that there is an inclusion $\Nef^m(C_d)\cap R^m(C_d)\subseteq \Neft^m(C_d)$,  which a priori could be strict. 

Note that, by Fact \ref{verygen}, if $C$ is very general then $\Efft^m(C_d)=\Eff^m(C_d)$,  $\Psefft^m(C_d)= \Pseff^m(C_d)$ and $\Nef^m(C_d)=\Neft^m(C_d)$.

A  case where we know a complete description of 
these cones is the case of curves of genus one\footnote{If $g=0$, i.e. $C=\PP^1$ then $C_d=\PP^d$ and all the cones in question become one-dimensional, hence trivial.}, where such a description can be deduced from the work of Fulger \cite{Ful}. 


\begin{example}[Genus $1$]\label{genus1}
If the curve $C$ has genus $1$, then for any $1\leq m \leq d-1$ we have that $N^m(C_d)=R^m(C_d)$ and 
\begin{equation}\label{E:cones-g1}
\Pseff^m(C_d)=\Nef^m(C_d)=\cone\left(x^{m-1}\theta, x^m-\frac{m}{d}x^{m-1}\theta\right) \subset N^m(C_d)_{\R} \cong \R^2.
\end{equation}

Indeed, if $g=1$ then Fact \ref{projbund} implies that $C_d$ (for any $d>0$) is isomorphic to  the projective bundle $\PP(E_d^*)\to \Pic^d C\cong C$, with $E_d^*$ a stable vector bundle over $\Pic^d C$ of degree $1$ and rank $d$. This implies that  
$$N^*(C_d)=R^*(C_d)=\frac{\R[x,\theta]}{(\theta^2=0, x^d- 
x^{d-1} \theta\color{black})},$$
from which we get that $\dim N^m(C_d) = 2$ and it is generated by $\{x^m, x^{m-1}\theta\}$. Now \eqref{E:cones-g1} follows from  \cite[Prop. 1.5 and Lemma 2.2]{Ful}, using that $E_d^*$ is semistable of slope $\mu(E_d^*)=1/d$. 

We can give a new proof of this fact by combining the results of 
this paper and of 
\cite{BKLV}. Indeed,  
$x^m-\frac{m}{d}x^{m-1}\theta$ is proportional to the class of any codimension-$m$ diagonal  by Example \ref{E:r1} and therefore it is an extremal ray of $\Pseff^m(C_d)$ (and hence also of $\Eff^m(C_d)$) by Theorem \ref{T:B}.
On the other hand, 
$x^{m-1}\theta$ is an extremal ray of $\Pseff^m(C_d)$ (and hence also of $\Eff^m(C_d)$) by \cite[Theorem B(1)]{BKLV}. This gives the  description  of $\Pseff^m(C_d)$ and of $\Eff^m(C_d)$. 
$\Nef^m(C_d)$ is easily computed by taking the dual. 
We also get that $\Eff^m(C_d)=\Pseff^m(C_d)$.
\end{example}

\begin{remark}[Tautological pseudoeffective classes of divisors with $2\leq d\leq g$]\label{R:oldresults}
%
%
As we noticed in \S \ref{S:oldresults}, one of the boundary rays of the two-dimensional cone $\Pseff^1(C_d)$ is spanned by the class of the big diagonal $[\Delta]=2(d-1)![(d+g-1)x-\theta]$, and if in addition $2\leq d\leq g$, the other boundary ray is generated by 
$\theta-\mu_d(C)x$, where the pseudoeffective slope $\mu_d(C):= \max\left\{m\geq 0\left|\theta-mx\in \Psefft^1(C_d)\right.\right\}$ is positive.
The 
latter has been extensively studied 
and, although the problem of determining 
it is in general widely open, various interesting results have been obtained:\footnote{For simplicity we state these results for a very general curve over the complex numbers, but many are true, often with the same proof, for a general curve over an arbitrary algebraically closed field.}   

$\bullet$ $\mu_{g}(C)=1$ if $C$ is a very general curve of genus $g$ by \cite[Rmk. 1 after Thm. 5]{Kou}. Indeed,
\null \ \ \ \ \ \ in \cite[Thm. B(3)]{BKLV} we will prove that the same result is true for any curve $C$.

$\bullet$ $\mu_{g-1}(C)=1+\frac{1}{2g-3}$ if $C$ is a very general curve of genus $g$ by \cite[Prop. B]{Mus1}. 

$\bullet$ $\mu_{g-2}(C)=1+\frac{2}{g-2}$ if $C$ is a very general curve of genus $g$ by \cite[Cor. IV]{Mus2}.

$\bullet$ 
$\mu_d(C)\leq g-d+1$ by \cite[\S 4]{Kou}, and equality holds if $C$ is an hyperelliptic curve by

\noindent \null \hskip .7cm \null \cite[Thm. G(i) and Prop. H]{Mus1}.  

$\bullet$ If $C$ is very general then $\mu_d(C)\leq g-\lfloor\sqrt{d-1}\sqrt{g}\rfloor$ by \cite[Thm. 5]{Kou}.

$\bullet$ For a very general curve $C$ (and hence also for an arbitrary curve by \cite[Prop. 3]{Ott2}), and

\noindent \null \hskip .7cm $d\leq g-3$ we have: if $d\leq \sqrt{g}$ then $\mu_d(C)\geq g-(d-1)\sqrt{g}$ by \cite[Thm. 5(2)]{Kou}; if $d\geq \sqrt{g}$ 

\noindent \null \hskip .7cm then $\mu_d(C)\geq \frac{g}{d}$ by \cite[Thm. 1]{Tar}; $\mu_d(C)\geq 1+\frac{g-d}{g^2-dg+(d-2)}$ by \cite[Thm. A]{Mus1}. 
    
$\bullet$ If $d=2$ then $g-\sqrt{g}\leq \mu_2(C)$ for an arbitrary curve and $\mu_2(C)\leq g-\left\lfloor\sqrt{g}\right\rfloor$ for a very 

\noindent \null \hskip .7cm  general curve by \cite[Thm. 2]{Kou}. In particular $\mu_2(C)=g-\sqrt{g}$ if $g$ is a perfect square 

\noindent \null \hskip .7cm  and $C$ is a very general curve (see also \cite[Prop. 3.1]{CK}). 

$\bullet$ Let $C$ be a very general curve genus $g$. Nagata's conjecture on plane curves implies that 

\noindent \null \hskip .7cm  $\mu_2(C)=g-\sqrt{g}$ if $g\geq 10$ by \cite[Prop. 3.2]{CK} (see also \cite[Cor. 1.7]{Ros}). For small $g$ 

\noindent \null \hskip .7cm  it is known that $\mu_2(C)=\frac{4}{3}>3-\sqrt{3}$ if $g=3$ by \cite[Thm. 2(3)]{Kou}, while some bounds 

\noindent \null \hskip .7cm  on $\mu_2(C)$ for  $5\leq g \leq 8$ have been obtained by the first author in \cite[Thm. 1.1]{Bas1} and 

\noindent \null \hskip .7cm  \cite[Thm. 1.7]{Bas2}. For results on $\mu_2(C)$ of a non general curve, see \cite{Cha}.
\end{remark}

\section{Diagonal cone}\label{S:diagcone}

The aim of this section is to study the cone generated by the diagonal $n$-cycles for $1\leq n \leq d$. 

Recall that for any partition $\un a=(a_1,\ldots, a_n)\in \bbN^{n}$ of $d$ in 
$n$ parts, i.e. $a_1\geq a_2\geq \ldots\geq a_n\geq 1$ and $\sum_{i=1}^n a_i=d$, we have the $n$-dimensional diagonal cycle $\Delta_{\un a}$ defined in \eqref{E:diag}.
We start by computing its class $[\Delta_{\un a}]\in R_n(C_d)$ in terms of the standard basis of $R_n(C_d)$ (see Proposition \ref{basetaut}) via the following formula, which simplifies the 
one of \cite[Chap. VIII, Prop. 5.1]{GAC1}.


\begin{prop}\label{diagclass}
For any partition $\un a=(a_1,\ldots, a_n)\in \bbN_{>0}^n$ of  $d$ in  $n$ parts, we have that 
$$[\Delta_{\un a}]=\prod_{i=1}^n a_i \cdot \sum_{\alpha=0}^{r(n)}x^{d-n-\alpha}\theta^{\alpha}\frac{(-1)^{\alpha}}{\alpha!}\sum_{\beta=0}^{\alpha}(-1)^{\beta}\binom{\alpha}{\beta} \sum_{s=0}^{r(n)} Q_{s}^{\un a}(\beta),$$
where
$$
Q_{s}^{\un a}(\beta)=(n-s)!s! \binom{g-\beta}{s}\sigma_s(\un{a-1}),
$$
with $\sigma_s$ is the $s$-th elementary symmetric function (with the convention that $\sigma_0=1$), $\un{a-1}=(a_1-1,\ldots,a_n-1)$ and $r(n):=\min\{n,d-n,g\}$. 
\end{prop}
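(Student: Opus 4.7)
The plan is to compute $[\Delta_{\un a}]\in R_n(C_d)$ using Proposition \ref{basetaut}(ii): writing $[\Delta_{\un a}]=\sum_{\alpha=0}^{r(n)} c_\alpha x^{d-n-\alpha}\theta^\alpha$ in the standard basis, the class is determined by the intersection numbers $I_\beta := [\Delta_{\un a}] \cdot x^{n-\beta}\theta^\beta$ for $0\le \beta\le r(n)$. Lemma \ref{L:inters} gives the triangular system $I_\beta = \sum_\alpha c_\alpha\,g!/(g-\alpha-\beta)!$, while the projection formula rewrites
$$I_\beta = \int_{C^n}\phi_{\un a}^*(x)^{n-\beta}\,\phi_{\un a}^*(\theta)^\beta.$$

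For the pullbacks, analyzing $\phi_{\un a}^{-1}(X_{p_0})$ gives $\phi_{\un a}^*(x)=\sum_i a_i f_i$ with $f_i=\pi_i^*[p_0]$. For $\theta$, one factors $\alpha_d\circ\phi_{\un a}$ as $m^{(n)}\circ([a_1]\times\cdots\times[a_n])\circ u^n$ (up to translation, which is numerically irrelevant), with $u\colon C\to J(C)$ the Abel-Jacobi map. Iterating the defining relation $m^*[\Theta]\equiv p_1^*[\Theta]+p_2^*[\Theta]+[\mathcal P]$ of the Poincar\'e class, using $[a]^*[\Theta]\equiv a^2[\Theta]$, the biadditivity $([a]\times[b])^*[\mathcal P]\equiv ab\,[\mathcal P]$, and Poincar\'e's formula $u^*[\Theta]=g[\mathrm{pt}]$, one obtains
$$\phi_{\un a}^*(\theta)=g\sum_i a_i^2 f_i+\sum_{i<j}a_ia_j\,\gamma_{ij},$$
where $\gamma_{ij}:=\pi_{ij}^*(u\times u)^*[\mathcal P]$. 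A direct computation on $C\times C$ yields $(u\times u)^*[\mathcal P]=f_1+f_2-[\Delta_{C\times C}]$, whence the key relations on $C^n$: $f_i^2=0$, $\gamma_{ij} f_i=\gamma_{ij} f_j=0$, and $\gamma_{ij}^2=-2g f_i f_j$.

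Expanding multinomially and applying these vanishings, each surviving term has the form $(\prod_{k\in T}f_k)(\prod_{\{i,j\}\in\mathcal M}\gamma_{ij})$, with $T$ an independent set of indices and $\mathcal M$ a matching on its complement. Organising by matching size $s$, the elementary symmetric polynomial $\sigma_s(\un{a-1})$ emerges as the combinatorial weight — the shift by $1$ arising when $g a_i^2$ is rewritten $g a_i(a_i-1)+g a_i$ and the linear piece absorbed into the $\phi^*(x)$-contribution — while $\binom{g-\beta}{s}$ tracks the $\gamma_{ij}^2\mapsto -2g f_i f_j$ reductions and $(n-s)!\,s!$ counts the orderings. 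This yields a closed form for $I_\beta$ as $\prod_i a_i$ times an explicit combination of the $Q_s^{\un a}(\beta)$. Inverting the triangular system by the classical finite-difference formula for falling factorials then produces the alternating sum $\frac{(-1)^\alpha}{\alpha!}\sum_{\beta\le\alpha}(-1)^\beta\binom{\alpha}{\beta}$ appearing in the statement; the truncation at $r(n)$ follows from $\theta^{g+1}=0$ together with $\dim R_n(C_d)=r(n)+1$.

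The main obstacle is the combinatorial bookkeeping in the multinomial expansion, specifically matching the coefficient of $\binom{g-\beta}{s}\sigma_s(\un{a-1})$ with the predicted prefactor $(n-s)!\,s!$. A cleaner shortcut is to start from the classical formula of \cite[Chap.~VIII, Prop.~5.1]{GAC1} (expressed as an indexed sum over certain combinatorial data) and show, via Newton's identities relating power sums to elementary symmetric polynomials, that its symmetric-function coefficients collapse to the $\sigma_s(\un{a-1})$'s, yielding the promised simplification.
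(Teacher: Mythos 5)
Your proposal takes a genuinely different route from the paper. The paper starts from the classical class formula \cite[Chap.~VIII, Prop.~5.1]{GAC1} (formula~\eqref{diagGAC}), establishes its validity over an arbitrary algebraically closed field via Macdonald's presentation of $H^*_{\rm et}(C_d,\bbQ_l)$, and then simplifies it by writing $R_{\un a}(t)=P_{\un a}(t)+F_{\un a}(t)$ with $F_{\un a}(t)=\sum_i a_i(a_i-1)t_i$, expanding $R_{\un a}^{g-\beta}/P_{\un a}^{g-\beta-n}=\sum_s\binom{g-\beta}{s}P_{\un a}^{n-s}F_{\un a}^s$, and reading off the coefficient of $t_1\cdots t_n$ directly — this is precisely where $(n-s)!\,s!\prod a_i\,\sigma_s(\un{a-1})$ appears, cleanly and without Newton's identities. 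Your approach instead rebuilds the class from scratch by computing the intersection numbers $[\Delta_{\un a}]\cdot x^{n-\beta}\theta^{\beta}$ on $C^n$ via pullback and the Poincar\'e-class relations $f_i^2=0$, $\gamma_{ij}f_i=0$, $\gamma_{ij}^2=-2g\,f_if_j$. Those relations are correct, and such an argument could in principle work.

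However, there is a genuine gap: the combinatorial core — the claim that ``organising by matching size $s$, the elementary symmetric polynomial $\sigma_s(\un{a-1})$ emerges \ldots while $\binom{g-\beta}{s}$ tracks the $\gamma_{ij}^2$ reductions and $(n-s)!\,s!$ counts the orderings'' — is stated as a narrative, not proved. You yourself flag this as ``the main obstacle''. That bookkeeping (and the subsequent inversion of the triangular system against Lemma~\ref{L:inters}) is the entire content of the proposition; asserting the answer is not the same as deriving it. Note also a technical slip: $(\phi_{\un a})_*[C^n]$ differs from $[\Delta_{\un a}]$ by the generic degree of $\phi_{\un a}$ (the order of the stabilizer of $\un a$ in $S_n$), so the projection formula step $I_\beta=\int_{C^n}\phi_{\un a}^*(x)^{n-\beta}\phi_{\un a}^*(\theta)^{\beta}$ needs this factor, which interacts nontrivially with the $\prod_i a_i$ in the target formula and must be tracked. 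Finally, the ``cleaner shortcut'' you mention at the end is indeed close to what the paper does, but it is not a matter of Newton's identities: the relevant manipulation is the elementary binomial expansion of $(P_{\un a}+F_{\un a})^{g-\beta}/P_{\un a}^{g-\beta-n}$ together with the extraction of the multilinear coefficient, plus Lemma~\ref{binom} to justify truncating the $\alpha$- and $s$-sums at $r(n)$. As written, the proposal is an outline rather than a proof.
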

\begin{proof}
Consider the polynomials $\displaystyle P_{\underline{a}}(t):=1+ \sum_{i=1}^n a_i t_i$ and $\displaystyle R_{\underline{a}}(t):=1+ \sum_{i=1}^n a_i^2 t_i$.
Following  \cite[Chap. VIII, Prop. 5.1]{GAC1}, it can be shown that
\begin{equation}\label{diagGAC}
\left[\Delta_{\underline{a}}\right]=\sum_{\alpha=0}^{\min\{d-n, g\}} x^{d-n-\alpha}\theta^{\alpha}\frac{(-1)^{\alpha}}{\alpha!}\sum_{\beta=0}^{\alpha}(-1)^{\beta}\binom{\alpha}{\beta}
\left[\frac{R_{\underline{a}}(t)^{g-\beta}}{P_{\underline{a}}(t)^{g-\beta-n}}\right]_{t_1t_2\cdots t_n}\,,
\end{equation}
where $[\,\cdot\,]_{t_1t_2\cdots t_n}$ denotes the coefficient of the monomial ${t_1t_2\cdots t_n}$.



To be more precise, the above formula is shown to be true in the singular cohomology ring $H^*(C_d,\bbC)$ in \cite[Chap. VIII, Prop. 5.1]{GAC1} first for a very general complex curve $C$ using that the algebraic part of $H^*(C_d,\bbC)$  is generated by $x$ and $\theta$ (compare with Fact \ref{verygen}), and then it is deduced to be true in $H^*(C_d,\bbC)$ for any complex curve $C$ by a specialization argument. We then get the formula in $N^*(C_d)$ since homological equivalence implies numerical equivalence. We have checked that the same formula is true in the $l$-adic cohomology $H_{\rm et}^*(C_d,\bbQ_l)$ over an arbitrary field $k=\ov k$ (for any prime $l$ different from the characteristic of $k$), which again will imply the result in $N^*(C_d)$.  
To see that, we argue as follows. First of all, the formula \cite[(15.2)]{Mac} of 
Macdonald for the cohomological class  of $\Delta_{\un a}$ holds true in $H^*_{\rm et}(C_d,\bbQ_l)$  because it descends quite formally from the presentation of the cohomology of $C_d$ given in \cite[(6.3)]{Mac} and this presentation holds  true for $H^*_{\rm et}(C_d,\bbQ_l)$). 
Macdonald's formula shows that the class 
$[\Delta_{\un a}]$ in $H^*_{\rm et}(C_d,\bbQ_l)$ is a polynomial in $x$ and $\theta$ (
namely $\eta$ and $\sum_{i=1}^g \sigma_i$ in Macdonald's notation).
Knowing this, we can repeat verbatim the proof of \cite[Chap. VIII, Prop. 5.1]{GAC1} replacing 
$H^*(C_d,\bbC)$ with 
$H^*_{\rm et}(C_d,\bbQ_l)$ and get the same formula. 

\vspace{0.1cm}

We now manipulate the above formula \eqref{diagGAC} a little bit. We set $\displaystyle F_{\underline{a}}(t):=R_{\underline{a}}(t)-P_{\underline{a}}(t)=\sum_{i=1}^n a_i(a_i-1) t_i$ and we compute 
\begin{equation}\label{exp-F}
\begin{aligned}
   \frac{R_{\underline{a}}(t)^{g-\beta}}{P_{\underline{a}}(t)^{g-\beta-n}}  & = \frac{\left(P_{\underline{a}}(t)+F_{\underline{a}}(t)\right)^{g-\beta}}{P_{\underline{a}}(t)^{g-\beta-n}}
    = \frac{1}{P_{\underline{a}}(t)^{g-\beta-n}} \sum_{s=0}^{g-\beta} {{g-\beta}\choose s}\left(P_{\underline{a}}(t)^{g-\beta-s}\cdot F_{\underline{a}}(t)^s\right)= \\
   & = \sum_{s\geq 0} {{g-\beta}\choose s}P_{\underline{a}}(t)^{n-s}F_{\underline{a}}(t)^s.
\end{aligned}
\end{equation}
We now expand each summand in the last equation in power series
\begin{equation}\label{exp-series}
P_{\underline{a}}(t)^{n-s}F_{\underline{a}}(t)^s= \sum_{l\geq 0}\binom{n-s}{l}\left[\sum_{i=1}^na_it_i \right]^{l}\left[\sum_{i=1}^na_i(a_i-1)t_i \right]^s.
\end{equation}
From 
\eqref{exp-series} we see that 
$t_1\ldots t_n$ 
occurs in $P_{\underline{a}}(t)^{n-s}F_{\underline{a}}(t)^s$ if and only if $0\leq l=n-s$ and that 
\begin{equation}\label{coef-t}
\left[P_{\underline{a}}(t)^{n-s}F_{\underline{a}}(t)^s\right]_{t_1t_2\cdots t_n}=
\begin{cases}
(n-s)!\,s!\cdot \prod_{i=1}^n a_i \cdot \sigma_s(\un{a-1}) & \text{if }s\leq n,\\
0 & \text{if }s>n.
\end{cases}
\end{equation}
Combining \eqref{exp-F}, \eqref{exp-series} and \eqref{coef-t}, we get that 
\begin{equation}\label{pol-beta}
\left[ \frac{R_{\underline{a}}(t)^{g-\beta}}{P_{\underline{a}}(t)^{g-\beta-n}} \right]_{t_1\ldots t_n}= \sum_{s=0}^{n} \binom{g-\beta}{s}(n-s)!s!\prod_{i=1}^n a_i  \cdot \sigma_s(\un{a-1})=
\prod_{i=1}^n a_i \cdot \sum_{s=0}^n Q_s^{\un a}(\beta).
\end{equation}
Now, substituting \eqref{pol-beta} into \eqref{diagGAC}, we get the desired formula except for the fact that the summation over $\alpha$ goes until $\min\{d-n, g\}$ and the summation over $s$ goes until $n$. 

But $Q_s^{\un a}(\beta)=0$ for $s>g$ 
since then $\binom{g-\beta}{s}=0$. Moreover,  $Q_s^{\un a}(\beta)=0$ for $s>d-n$ because $\un{a-1}$, being  a partition of  $d-n$,  can have at most $d-n$ non-zero terms, which implies that  $\sigma_s(\un{a-1})=0$ if $s>d-n$. Hence, we can restrict the summation over $s$ until $r(n)$. 

Consequently, $\sum_{s=0}^{r(n)} Q_s^{\un a}(\beta)$ is a polynomial in $\beta$ of degree at most $r(n)$, hence $\sum_{\beta=0}^{\alpha}(-1)^{\beta}\binom{\alpha}{\beta} \sum_{s=0}^{r(n)} Q_{s}^{\un a}(\beta) = 0$ for $r(n)<\alpha$ by Lemma \ref{binom} below. Therefore, we can restrict the summation over $\alpha$ until $r(n)$, and the formula is now proved.
\end{proof}

\begin{lemma}\label{binom}
Let $P(x)=\sum_{n=0}^{d} c_n \binom{x}{n}\in \Z[x]$ be a polynomial (in the Newton expansion). Then  
$$\sum_{j=0}^n (-1)^j\binom{n}{j} P(j)=(-1)^n c_n.$$
In particular, if $Q(x)=b_0+\ldots+ b_n x^n \in \Z[x]$ a polynomial of degree at most $n$ then 
$$\sum_{j=0}^n (-1)^j\binom{n}{j} Q(j)=(-1)^n n! b_n.$$
\end{lemma}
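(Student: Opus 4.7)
The plan is to invoke the classical calculus of finite differences. Define the forward difference operator $\Delta$ by $(\Delta P)(x) = P(x+1) - P(x)$. A straightforward induction on $n$ (using the binomial identity $\binom{n+1}{j} = \binom{n}{j} + \binom{n}{j-1}$) yields the standard formula
\[
(\Delta^n P)(0) = \sum_{j=0}^n (-1)^{n-j} \binom{n}{j} P(j),
\]
so the left-hand side of the identity to be proved is precisely $(-1)^n (\Delta^n P)(0)$.

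Next I would compute $\Delta^n$ on the Newton basis. From the Pascal-type identity $\binom{x+1}{k} - \binom{x}{k} = \binom{x}{k-1}$, one gets $\Delta \binom{x}{k} = \binom{x}{k-1}$, hence by iteration $\Delta^n \binom{x}{k} = \binom{x}{k-n}$, with the convention that $\binom{x}{m}=0$ for $m<0$. Evaluating at $x=0$ gives $\binom{0}{k-n}$, which equals $1$ when $k=n$ and $0$ otherwise (both for $k<n$ and $k>n$). By linearity on the Newton expansion $P(x) = \sum_{k=0}^d c_k \binom{x}{k}$, we obtain $(\Delta^n P)(0) = c_n$. Multiplying by $(-1)^n$ proves the first identity.

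For the "in particular" statement, one only needs to determine the leading Newton coefficient of $Q(x) = b_0 + \cdots + b_n x^n$. Since $\binom{x}{k} = \frac{1}{k!} x^k + (\text{lower degree})$, writing $Q(x) = \sum_{k=0}^n c_k \binom{x}{k}$ forces $c_n = n!\, b_n$ by comparing top-degree coefficients. Applying the first part with $d=n$ then yields $\sum_{j=0}^n (-1)^j \binom{n}{j} Q(j) = (-1)^n n!\, b_n$, as claimed.

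No serious obstacle is expected: both steps are standard manipulations in finite-difference calculus, and the only mild care needed is in handling the boundary convention $\binom{0}{m}=0$ for $m\neq 0$ (including negative $m$) in order to make the evaluation of $\Delta^n \binom{x}{k}$ at $0$ behave as a Kronecker delta.
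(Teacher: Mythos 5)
Your proof is correct. The paper itself supplies no argument for this lemma, deferring to \cite[p.\ 190]{GKP}; your finite-difference derivation (rewriting the alternating sum as $(-1)^n(\Delta^n P)(0)$, computing $\Delta^n$ on the Newton basis via $\Delta\binom{x}{k}=\binom{x}{k-1}$, and extracting the leading Newton coefficient for the second claim) is the standard proof of this identity and matches the cited source.
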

\begin{proof}
See \cite[p.190]{GKP}.
\end{proof}


We now introduce the main object of interest of this section.

\begin{defi}\label{D:diagcone}
For any $1 \le n \le d-1$, the \emph{$n$-dimensional diagonal cone}, which we will denote by $\D_n(C_d)$, is the cone inside $R_n(C_d)$ spanned by the diagonal cycles of dimension $n$. 
\end{defi}
Note that we have an inclusion of cones $\D_n(C_d)\subset \Efft_n(C_d) \subset \Psefft_n(C_d)$ and  similarly for the non-tautological cones of effective and pseudoeffective cycles. 

In the remaining of this section, we want to study the structure of the diagonal cone $\D_n(C_d)$. 
To this aim, let us introduce some notation.
We will denote by $\P_{\leq n}(d-n)$ the partitions of $d-n$ with at most $n$ parts, i.e.
$$\P_{\leq n}(d-n)=\{\un{\lambda}=(\lambda_1, \ldots, \lambda_n)\in \bbN^n\: : \lambda_1\geq \ldots\geq \lambda_n\geq 0, \: \sum_{i=1}^n \lambda_i=d-n\}.$$ 
Note that an element $\un \lambda\in \P_{\leq n}(d-n)$ can have at most $d-n$ non-zero parts; hence, if we set 
$$s(n):=\min\{n,d-n\},$$ 
we have a natural identification $\P_{\leq n}(d-n)\cong \P_{\leq s(n)}(d-n)$ obtained by forgetting the last $n-s(n)$ entries, which are necessarily zero. 
Given a partition $\un \lambda\in \P_{\leq n}(d-n)$, we get a partition of $d$ with exactly $n$ parts by setting $\un{\lambda+1}:=(\lambda_1+1,\ldots, \lambda_n+1)$; hence we have an associated $n$-dimensional diagonal cycle $\Delta_{\un{\lambda+1}}$. 

We now introduce the balanced partitions, (a subset of) which will give rise to extremal rays of $\D_n(C_d)$. For any $1\leq j \leq s(n)$, we can write $d-n=\alpha(j)\cdot j +\rho(j)$ for some unique integers $\alpha(j), \rho(j)\in \bbN$ with the property that  $0\leq \rho(j)<j$ and  $\alpha(j)\geq 1$ (using that $j\leq s(n)\leq d-n$). For any such $j$, define the $j$-th \emph{balanced partition} $\un \lambda^j=(\lambda^j_1,\ldots, \lambda^j_{s(n)})\in \P_{\leq s(n)}(d-n)$ as


\begin{equation}\label{balpart}
\un \lambda^j:=(\underbrace{\alpha(j)+1, \cdots, \alpha(j)+1}_{\rho(j)}, \underbrace{\alpha(j), \cdots, \alpha(j)}_{j-\rho(j)}, 0, \cdots, 0) \in \P_{\leq s(n)}(d-n).
\end{equation}
In other words, $\un \lambda^j$ has only $j$ non-zero entries and they are in the most possible ``balanced"  configuration. The associated $n$-dimensional diagonal will be denoted by $\Delta^{\rm bal}_{j, n}:=\Delta_{\un{\lambda^j+1}}$ and it will be called the  \emph{$j$-th balanced $n$-dimensional diagonal}.   

After 
this notation, we are ready to describe the diagonal cone $\D_n(C_d)$ inside $R_n(C_d)$. Recall from Proposition \ref{basetaut} that $R_n(C_d)$ has dimension $r(n)+1$, where $$r(n):=\min\{n, d-n, g\}$$ 
and note that $r(n)\leq s(n)$.

\begin{thm}\label{T:diagcone}
For $1\leq n\leq d-1$ and $g\geq 1$\footnote{Note that the cases $n=0$, $n=d$ or $g=0$ are trivial since in each of these cases $R^n(C_d)$ has dimension one.}, the $n$-dimensional diagonal cone $\D_n(C_d)$  is a  rational polyhedral cone of dimension $r(n)$ (hence it has codimension one in $\Efft_n(C_d)$ and in $\Psefft_n(C_d)$), whose extremal rays are generated by:
\begin{itemize}
\item $\Delta^{\rm bal}_{1,n}, \ldots,  \Delta^{\rm bal}_{s(n), n}$ if $r(n)\geq 4$; 
\item $\Delta^{\rm bal}_{1,n}, \{ \Delta^{\rm bal}_{j,n}\: :\: j\:  \text{is a $(d-n)$-break} \}, \Delta^{\rm bal}_{s(n), n}$ if $r(n)=3$, where a number $2\leq j \leq s(n)-1$ is called a 
$(d-n)$\emph{-break} \color{black}if 
$$\left\lceil \frac{d-n}{j-1}\right\rceil > \left\lfloor \frac{d-n}{j+1}\right\rfloor+1 \:\left(\text{or equivalently if} \left\lceil \frac{d-n}{j-1}\right\rceil 
\neq\color{black}\left\lfloor \frac{d-n}{j+1}\right\rfloor+1\right)
$$
\item $\Delta^{\rm bal}_{1,n}$ and $\Delta^{\rm bal}_{s(n), n}$ if $r(n)=2$.
\end{itemize}

In particular,  if $s(n)=r(n)$ (which happens if and only if $n\leq g$ or $d-n\leq g$) then $\D_n(C_d)$ is simplicial and each balanced diagonal generates an extremal ray of $\D_n(C_d)$. 
\end{thm}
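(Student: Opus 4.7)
The plan is to exploit the explicit formula of Proposition \ref{diagclass} to reparametrize the diagonal cone in terms of elementary symmetric functions of partitions, reducing the problem to convex geometry in $\R^{r(n)-1}$. Setting $c_{\alpha,s}:=\sum_{\beta=0}^{\alpha}(-1)^{\beta}\binom{\alpha}{\beta}\binom{g-\beta}{s}$ and swapping the order of summation in Proposition \ref{diagclass} one obtains
$$\frac{[\Delta_{\un a}]}{\prod_{i=1}^n a_i} \;=\; \sum_{s=0}^{r(n)} \sigma_s(\un{a-1})\, w_s,\qquad w_s:=(n-s)!\, s!\sum_{\alpha=0}^{s}\frac{(-1)^\alpha c_{\alpha,s}}{\alpha!}x^{d-n-\alpha}\theta^\alpha.$$
By Lemma \ref{binom}, $c_{\alpha,s}=0$ for $\alpha>s$ and $c_{s,s}=1$, so in the standard basis of $R_n(C_d)$ (Proposition \ref{basetaut}) the expansion of $w_s$ is lower-triangular with nonzero diagonal entry $(-1)^s(n-s)!$ on $x^{d-n-s}\theta^s$; hence $\{w_0,\ldots,w_{r(n)}\}$ is a basis of $R_n(C_d)$.

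Since $\sigma_0(\un{a-1})=1$ and $\sigma_1(\un{a-1})=d-n$ are \emph{constant} across all partitions $\un a$ of $d$ into $n$ parts, each normalized class $[\Delta_{\un a}]/\prod a_i$ lies in the $(r(n)-1)$-dimensional rational affine subspace $\cA:=(w_0+(d-n)w_1)+\mathrm{span}(w_2,\ldots,w_{r(n)})$. As there are only finitely many such $\un a$ and every coefficient is rational, $\D_n(C_d)$ is the rational polyhedral cone from the origin over a rational polytope in $\cA$; this already forces it to have dimension exactly $r(n)$, one less than $\dim R_n(C_d)=r(n)+1$, and hence to have codimension one in both $\Efft_n(C_d)$ and $\Psefft_n(C_d)$.

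The problem of computing the extremal rays becomes the purely combinatorial one of identifying the vertices of the polytope
$$P \;=\; \conv\bigl\{\bigl(\sigma_2(\un\lambda),\ldots,\sigma_{r(n)}(\un\lambda)\bigr)\;:\;\un\lambda\in\P_{\le s(n)}(d-n)\bigr\}\subset\R^{r(n)-1}.$$
The basic tool is a \emph{smoothing lemma}: whenever $\un\lambda$ has entries with $\lambda_i\ge\lambda_j+2$, replacing $(\lambda_i,\lambda_j)$ by $(\lambda_i-1,\lambda_j+1)$ preserves the number of nonzero parts and $\sigma_1$, and strictly increases $\lambda_i\lambda_j$, hence every $\sigma_s$ for $2\le s\le r(n)$ (via $\sigma_s(x,y,u)=\sigma_s(u)+(x+y)\sigma_{s-1}(u)+xy\,\sigma_{s-2}(u)$ and positivity of $\sigma_{s-2}(u)$ for partitions with enough nonzero entries). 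Iterating shows that the balanced $\un\lambda^j$ strictly dominates any other partition with exactly $j$ nonzero parts coordinatewise; combining this with affine interpolation between balanced partitions of different lengths rules out every non-balanced partition from being a vertex of $P$.

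It then remains to sift the balanced candidates $\un\lambda^1,\ldots,\un\lambda^{s(n)}$. Since $\sigma_k(\un\lambda^j)=0$ for $k>j$, the images of $\un\lambda^1,\ldots,\un\lambda^{\min\{s(n),r(n)\}}$ lie on a nested flag of coordinate subspaces of $\R^{r(n)-1}$, so coordinate functionals immediately exhibit $\un\lambda^1$ (the origin) and $\un\lambda^{s(n)}$ as extremal, which settles the case $r(n)=2$. For $r(n)=3$ the candidates live in the $(\sigma_2,\sigma_3)$-plane, and a direct computation shows that $\un\lambda^j$ (for $1<j<s(n)$) lies on the segment from $\un\lambda^{j-1}$ to $\un\lambda^{j+1}$ precisely when the arithmetic $(d-n)$-break inequality \emph{fails}; concavity of the whole balanced sequence then ensures that non-collinearity with immediate neighbors implies non-collinearity with any pair of further neighbors, yielding the stated list. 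For $r(n)\ge 4$ one constructs, for each $j\in\{1,\ldots,s(n)\}$, an explicit supporting hyperplane of $P$ separating $\un\lambda^j$ from every other balanced partition, using that the extra dimensions provide enough degrees of freedom. The simplicial conclusion when $s(n)=r(n)$ is then automatic: one has exactly $r(n)$ balanced partitions generating an $r(n)$-dimensional cone. The main obstacle will be this last convex-geometric analysis, in particular the explicit derivation of the $(d-n)$-break criterion in the case $r(n)=3$ and the uniform construction of supporting hyperplanes at every $\un\lambda^j$ when $r(n)\ge 4$.
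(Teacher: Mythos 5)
Your overall strategy is the same as the paper's: introduce the basis $\{w_s\}$, observe that $w_0$- and $w_1$-coordinates are constant across diagonals, and reduce the theorem to identifying the vertices of the polytope $P=\conv\bigl\{(\sigma_2(\un\lambda),\ldots,\sigma_{r(n)}(\un\lambda)):\un\lambda\in\cP_{\le s(n)}(d-n)\bigr\}$. This matches the paper's Lemma~\ref{newbasis} and Corollary~\ref{C:secdiag} precisely. The case split on $r(n)=2,3,\ge 4$ also parallels the paper.

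However, the central step where you rule out non-balanced partitions from being vertices has a genuine gap. Your smoothing lemma correctly shows that replacing a pair $(\lambda_i,\lambda_j)$ with $\lambda_i\ge\lambda_j+2$, $\lambda_j\ge 1$, by a more balanced pair (weakly) increases every $\sigma_s$; iterating, each $\Sigma_{\le r}(\un\mu)$ is coordinatewise dominated by $\Sigma_{\le r}(\un\lambda^j)$ where $j$ is the number of nonzero parts of $\un\mu$. But coordinatewise domination does \emph{not} by itself imply membership in $\conv\{\Sigma_{\le r}(\un\lambda^1),\ldots,\Sigma_{\le r}(\un\lambda^{s(n)})\}$ --- a point dominated by a vertex need not lie below the facets of the convex hull, and ``affine interpolation between balanced partitions of different lengths'' does not close this. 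What is actually needed, and what the paper proves (Sublemma~\ref{posfunct}, a Chebyshev-type result), is that any affine functional $a_0+\sum_{k\ge 2} a_k\sigma_k$ which is nonnegative on all the $\un\lambda^j$ is nonnegative on the whole of $(\bbN^{s})^{\sum=d-n}$; i.e.\ the supporting halfspaces of the simplex spanned by the balanced images contain the whole image of $\Sigma_{\le r}$. This is a positivity statement about \emph{arbitrary} sign patterns of the $a_k$, and it does not follow from monotonicity of the individual $\sigma_k$ along smoothing moves; the paper's proof (minimize an affine combination, show a minimizer can be rebalanced without changing the value) is essentially forced.

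Two further points. In the $r(n)=3$ case your phrase ``concavity of the whole balanced sequence'' needs the quantitative version: the paper's Lemma~\ref{L:vanishdet} computes a $2\times 2$ determinant for arbitrary triples $j_1<j_2<j_3$ and shows its sign is governed exactly by $\lceil(d-n)/j_1\rceil=\lfloor(d-n)/j_3\rfloor+1$, which is what lets you pass from non-collinearity with neighbors to genuine extremality. In the $r(n)\ge 4$ case, the paper does \emph{not} construct a supporting hyperplane at each $\un\lambda^j$ directly; instead it projects to $(\sigma_2,\sigma_3)$ to confine any potential convex dependence to a ``break-free'' segment of consecutive balanced partitions, then projects to $(\sigma_2,\sigma_3,\sigma_4)$ and uses an explicit linear-algebra computation (Lemma~\ref{L:face-r3}) to show those images are affinely a convex-position polygon. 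Your plan of producing an explicit supporting hyperplane for each $j$ in all dimensions is stated as your ``main obstacle'' and is, as far as I can tell, harder to carry out uniformly than the paper's two-projection reduction.
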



In the case $r(n)=1$,  the above theorem says that $\D_n(C_d)$ is one dimensional, which is equivalent to say that all the diagonals have proportional classes. 
This can be easily checked. 

\begin{example}\label{E:r1}(Diagonal cone for $r(n)=1$)
If $r(n)=1$ (which happens if and only $g=1$ and $1\leq n\leq d-1$ or $g\geq 2$ and $n=1$ or $n=d-1$),
the class of any $n$-dimensional diagonal $\Delta_{\un a}$ is equal to (using Proposition \ref{diagclass}) 
$$[\Delta_{\un a}]=\prod\limits_{i=1}^n a_i (n-1)! \left\{[n+g(d-n)]x^{d-n}-(d-n)x^{d-n-1}\theta\right\}. 
$$ 
In particular, the big diagonal (i.e. the unique diagonal of codimension one) has class
$$[\Delta_{(2,1,\ldots,1)}]=2(d-2)!  \left\{[d-1+g]x-\theta \right\},$$
while the class of the small diagonal (i.e. the unique diagonal of dimension one) is 
$$[\Delta_{(d)}]=d\left\{[1+g(d-1)]x^{d-1}-(d-1)x^{d-2}\theta\right\}.
$$ 
In genus $g=1$, all the diagonals have proportional classes: explicitly, we have 
$$[\Delta_{\un a}]=\prod\limits_{i=1}^n a_i (n-1)! \left[ d x^{d-n}-(d-n) x^{d-n-1}\theta\right].$$
\end{example}
To prove Theorem \ref{T:diagcone} 
when $r(n)\geq 2$, we will  introduce another basis of 
$R_n(C_d)$, with respect to which the classes of the diagonals of dimension $n$ can be written in a very simple form. 

\begin{lemma}\label{newbasis}
For any $0\leq n\leq d$, the elements
$$w_s:=s!(n-s)! \sum_{\alpha=0}^{s}x^{d-n-\alpha}\theta^{\alpha} \frac{(-1)^{\alpha}}{\alpha!}\sum_{\beta=0}^{\alpha} (-1)^{\beta}\binom{\alpha}{\beta}  \binom{g-\beta}{s}\in R_n(C_d) \quad \text{ for any } \: 0\leq s\leq r(n),$$
form a basis of $R_{n}(C_d)$. For any partition  $\un \lambda\in \P_{\leq n}(d-n)$, the  class of the associated diagonal cycle $\Delta_{\un{\lambda+1}}$, suitably normalized, can be written as  
$$\wt \delta_{\un{\lambda+1}}:=\frac{[\Delta_{\un{\lambda+1}}]}{\prod_{i=1}^n (\lambda_i+1)}=\sum_{s=0}^{r(n)} \sigma_s(\un{\lambda})w_s=w_0+(d-n)w_1+\sum_{s=2}^{r(n)} \sigma_s(\un{\lambda})w_s.
$$
\end{lemma}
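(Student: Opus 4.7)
The plan is to treat both statements as essentially one computation, obtaining the expansion of $\wt\delta_{\un{\lambda+1}}$ first and then reading off linear independence of $\{w_s\}$ from the shape of the resulting change-of-basis matrix.

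First I would prove the second assertion by reordering the formula from Proposition~\ref{diagclass}. Setting $\un a=\un{\lambda+1}$ so that $\un{a-1}=\un\lambda$ and $\prod_i a_i=\prod_i(\lambda_i+1)$, and substituting $Q_s^{\un a}(\beta)=(n-s)!\,s!\,\binom{g-\beta}{s}\sigma_s(\un\lambda)$ into the formula, one can swap the order of summation so that the sum over $s$ is outermost. This gives
\[
\wt\delta_{\un{\lambda+1}}=\sum_{s=0}^{r(n)}\sigma_s(\un\lambda)\cdot s!(n-s)!\sum_{\alpha=0}^{r(n)}x^{d-n-\alpha}\theta^{\alpha}\frac{(-1)^{\alpha}}{\alpha!}\sum_{\beta=0}^{\alpha}(-1)^{\beta}\binom{\alpha}{\beta}\binom{g-\beta}{s}.
\]
The point is that $\binom{g-\beta}{s}$ is a polynomial in $\beta$ of degree exactly $s$, so by Lemma~\ref{binom} the inner $\beta$-sum vanishes whenever $\alpha>s$. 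Hence the $\alpha$-sum truncates at $\alpha=s$ and the bracket becomes exactly $w_s$, proving $\wt\delta_{\un{\lambda+1}}=\sum_{s=0}^{r(n)}\sigma_s(\un\lambda)w_s$. The final simplification uses $\sigma_0(\un\lambda)=1$ and $\sigma_1(\un\lambda)=\sum_i\lambda_i=d-n$.

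For the first assertion, by Proposition~\ref{basetaut}, $R_n(C_d)=R^{d-n}(C_d)$ has dimension $r(n)+1$ with standard basis $\{x^{d-n-\alpha}\theta^{\alpha}\}_{\alpha=0}^{r(n)}$, so it suffices to show linear independence of the $r(n)+1$ elements $w_0,\ldots,w_{r(n)}$. The defining formula shows that $w_s$ is a linear combination of $x^{d-n-\alpha}\theta^{\alpha}$ only for $0\le\alpha\le s$, so the change-of-basis matrix expressing the $w_s$ in the standard basis is triangular. To conclude it suffices to check that its diagonal entries are nonzero. The coefficient of $x^{d-n-s}\theta^s$ in $w_s$ equals
\[
s!(n-s)!\cdot\frac{(-1)^s}{s!}\sum_{\beta=0}^{s}(-1)^{\beta}\binom{s}{\beta}\binom{g-\beta}{s},
\]
and applying Lemma~\ref{binom} to the degree-$s$ polynomial $\binom{g-\beta}{s}$, whose leading coefficient in $\beta$ is $(-1)^s/s!$, the inner sum equals $(-1)^s s!\cdot(-1)^s/s!=1$. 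Therefore the diagonal entry is $(-1)^s(n-s)!$, which is nonzero because $s\le r(n)\le n$.

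The whole argument is routine algebra once Proposition~\ref{diagclass} and Lemma~\ref{binom} are in place; the only delicate point is the double use of Lemma~\ref{binom}: once to justify truncating the $\alpha$-sum at $s$ (degree-drop argument, $\alpha>s$) and once to evaluate the leading coefficient in the triangular matrix (leading-coefficient identity, $\alpha=s$). I expect no genuine obstacle.
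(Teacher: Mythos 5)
Your proposal is correct and follows essentially the same argument as the paper: the basis claim is established by checking the change-of-basis matrix is triangular with diagonal entry $(-1)^s(n-s)!$, computed via the leading-coefficient case of Lemma~\ref{binom}; and the expansion of $\wt\delta_{\un{\lambda+1}}$ is obtained from Proposition~\ref{diagclass} by swapping the $\alpha$- and $s$-sums and using Lemma~\ref{binom} to truncate the $\alpha$-sum at $\alpha=s$. The only difference is cosmetic (you prove the two assertions in the opposite order).
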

\begin{proof}
Using Lemma \ref{binom}, we get that the coefficient of $x^{d-n-s}\theta^s$ in $w_s$ is equal to 
$$s!(n-s)!\frac{(-1)^s}{s!}\sum_{\beta=0}^s(-1)^{\beta} \binom{s}{\beta}\binom{g-\beta}{s}=(-1)^s(n-s)!\neq 0.$$
Hence the matrix that gives the coordinates of the $w_s$'s with respect to the standard basis of $R_n(C_d)$ (see Proposition \ref{basetaut}(i)) is triangular with non-zero entries on the diagonal, hence it is invertible.
This implies that 
$\{w_s\}_{0\leq s \leq r(n)}$ 
is a basis of $R_n(C_d)$. 

The formula for $[\Delta_{\un{\lambda+1}}]$ follows directly from Proposition \ref{diagclass} (and the easy facts that $\sigma_0(\un{\lambda})=1$ and $\sigma_1(\un{\lambda})=d-n$), observing that fixing $s$ in the summation, the terms with  $\alpha>s$ vanish by Lemma \ref{binom} since $Q_s^{\un{\lambda+1}}(\beta)$ is a polynomial of degree $s$. 
\end{proof}


Lemma \eqref{newbasis} allows us to explicitly describe a polytope which is a bounded section of $\D_n(C_d)$. 

We first introduce 
some notation.
Fix three natural numbers $t\geq s\geq r\geq 2$. 
Let $(\bbN^s)^{\sum=t}$ be the finite subset of $\bbN^s$ 
of all 
$\un x=(x_1,\ldots, x_s)$ such that $\sum_{i=1}^s x_i=t$ and consider the map
\begin{equation*}
\begin{aligned}
\Sigma_{\leq r}: (\bbN^s)^{\sum=t} & \longrightarrow \bbN^{r-1}\subset \R^{r-1}\\
 \un x & \mapsto (\sigma_2(\un x), \ldots \sigma_r(\un x)).
 \end{aligned}
\end{equation*}
We will denote by $\Pi(t,s,r)$ the (integral) \emph{polytope} in $\bbR^{r-1}$ which is the convex hull of the image of $\Sigma_{\leq r}$. Note that the set $\P_{\leq s}(t)$ of partitions of $t$ with at most $s$ parts is naturally identified with the subset of $(\bbN^s)^{\sum=t}$  consisting of all the vectors whose entries are in non-increasing order. Since 
$\Sigma_{\leq r}$ is invariant under permutation of the entries of the vectors of $(\bbN^s)^{\sum=t}$, the polytope $\Pi(t,s,r)$ is also the convex hull of the image of the elements of $\P_{\leq s}(t)$ via 
$\Sigma_{\leq r}$. 


\begin{cor}\label{C:secdiag}
If $r(n)\geq 2$, then 
$\Pi(d-n,s(n),r(n))$ is a bounded section of $\D_n(C_d)$.\color{black}
\end{cor}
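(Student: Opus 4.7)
The plan is to read off a bounded section of $\D_n(C_d)$ directly from the expression of the diagonal classes in the basis $\{w_0,\ldots,w_{r(n)}\}$ of $R_n(C_d)$ furnished by Lemma \ref{newbasis}. First I would replace the generators $[\Delta_{\un{\lambda+1}}]$ of $\D_n(C_d)$ with their positive rescalings $\wt\delta_{\un{\lambda+1}}=[\Delta_{\un{\lambda+1}}]/\prod_i(\lambda_i+1)$, which span the same rays; so $\D_n(C_d)$ is the conical hull of the finite set $\{\wt\delta_{\un{\lambda+1}}:\un\lambda\in\P_{\leq n}(d-n)\}$.

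The key observation is then the formula of Lemma \ref{newbasis}: in the basis $\{w_s\}$, the class $\wt\delta_{\un{\lambda+1}}$ has coordinates $(1,\,d-n,\,\sigma_2(\un\lambda),\,\ldots,\,\sigma_{r(n)}(\un\lambda))$. Since the first two coordinates are constant in $\un\lambda$, every generator lies on the $(r(n)-1)$-dimensional affine subspace $A=\{w_0=1,\,w_1=d-n\}$, which moreover misses the origin. Standard convex geometry then gives that $\D_n(C_d)\cap A$ is a compact polytope meeting every ray of $\D_n(C_d)$, so it is a bounded section of the cone and coincides with the convex hull of the points $\wt\delta_{\un{\lambda+1}}$.

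To finish, I would identify $A$ with $\bbR^{r(n)-1}$ via the projection onto the coordinates $(w_2,\ldots,w_{r(n)})$, so that $\wt\delta_{\un{\lambda+1}}$ is sent to $\Sigma_{\leq r(n)}(\un\lambda)$. Using the natural bijection $\P_{\leq n}(d-n)\cong \P_{\leq s(n)}(d-n)$ obtained by forgetting trailing zeros (which is harmless because any partition of $d-n$ has at most $d-n\geq s(n)$ non-zero parts), our generating set is carried onto $\Sigma_{\leq r(n)}(\P_{\leq s(n)}(d-n))$, whose convex hull is by definition $\Pi(d-n,s(n),r(n))$. There is no real obstacle here: the substance is already packaged in Lemma \ref{newbasis}, and the only thing to notice is that the first two coordinates of $\wt\delta_{\un{\lambda+1}}$ in the basis $\{w_s\}$ are independent of $\un\lambda$, which is exactly what makes a hyperplane section available.
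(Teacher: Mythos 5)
Your proof is correct and follows essentially the same route as the paper: both use the coordinate expression from Lemma \ref{newbasis} to observe that the normalized diagonal classes $\wt\delta_{\un{\lambda+1}}$ all lie on the affine slice $\{L_{w_0}=1,\,L_{w_1}=d-n\}$, conclude that $\D_n(C_d)$ is the cone over their convex hull, and identify that convex hull with $\Pi(d-n,s(n),r(n))$ via the projection to the $(w_2,\ldots,w_{r(n)})$-coordinates together with the identification $\P_{\leq n}(d-n)\cong\P_{\leq s(n)}(d-n)$. The only small imprecision is in your parenthetical justifying that identification: what one needs is that a partition of $d-n$ with at most $n$ parts has at most $\min\{n,d-n\}=s(n)$ nonzero parts (it has $\leq n$ parts and $\leq d-n$ of them can be positive), so the forgotten trailing entries are all zero; but since this bijection is already established in the paper before the corollary, citing it is fine.
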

\begin{proof}
With notation as in Lemma \ref{newbasis}, for any $0\leq s \leq r(n)$ let $L_{w_s}$ be the dual of $w_s$, i.e. the linear functional on $N_n(C_d)$ which is $1$ on $w_s$ and $0$ on $w_p$ for $p\neq s$. 
From Lemma \ref{newbasis}, it follows that all the 
$\wt \delta_{\un{\lambda+1}}$ lie on the codimension two affine subspace $\{L_{w_0}=1\}\cap \{L_{w_1}=d-n\}$ of $N_n(C_d)$. Therefore, $\D_n(C_d)$ is the cone over the polytope
$$\conv\left(\wt \delta_{\un{\lambda+1}}, \un \lambda \in \P_{\leq n}(d-n)\right)\subset \aff\left(\wt \delta_{\un{\lambda+1}}, \un \lambda \in \P_{\leq n}(d-n) \right) \subseteq \{L_{w_0}=1\}\cap \{L_{w_1}=d-n\}.$$
 Lemma \ref{newbasis}, together with the fact that $\P_{\leq n}(d-n)\cong \P_{\leq s(n)}(d-n)$ (as observed above), implies that  $\conv\left(\wt \delta_{\un{\lambda+1}}, \un \lambda \in \P_{\leq n}(d-n)\right)$ can be identified with 
$\Pi(d-n,s(n),r(n))$. 
\end{proof}


Using the above corollary, the proof of Theorem \ref{T:diagcone} for $r(n)\geq 2$ follows straightforwardly from the following result describing the dimension and the vertices of the polytopes $\Pi(t,s,r)$ in terms of balanced partitions. 
Recall that, for any $1\leq j \leq s$, if we write 
\begin{equation}\label{E:expr}
t=\alpha(j)\cdot j +\rho(j) \text{ for some } \alpha(j), \rho(j)\in \bbN \text{ with the property that  } 0\leq \rho(j)\leq j,
\end{equation}
then the $j$-th  \emph{balanced partition}  is defined as
 \begin{equation}\label{balpart2}
 \un \lambda^j:=(\underbrace{\alpha(j)+1, \cdots, \alpha(j)+1}_{\rho(j)}, \underbrace{\alpha(j), \cdots, \alpha(j)}_{j-\rho(j)}, 0, \cdots, 0) \in  \P_{\leq s}(t)\subset (\bbN^s)^{\sum=t},
\end{equation}
Notice that the expression \eqref{E:expr} is unique if $j$ does not divide $t$, while if $j$ divides $t$ there are two such expressions, namely $\displaystyle t=\frac{t}{j} \cdot j+0$ and $\displaystyle t=\left(\frac{t}{j}-1\right) j+j$. However the definition \eqref{balpart2} does not depend on the chosen expression.  


\begin{prop}\label{P:vertpoly}
Fix three natural numbers $t\geq s\geq r\geq 2$. Then the polytope $\Pi(t,s,r)$ has dimension $r-1$ and its vertices are:
\begin{enumerate}[(A)]
\item $ \Sigma_{\leq r}(\un \lambda^1)=0$ and $ \Sigma_{\leq r}(\un \lambda^s)$ if $r=2$; 
\item $\Sigma_{\leq r}(\un \lambda^1)=0, \{ \Sigma_{\leq r}(\un \lambda^j)\: :\: j\:  \text{ is a $t$-break} \},  \Sigma_{\leq r}(\un \lambda^s)$ if $r=3$, where, as in Theorem \ref{T:diagcone}, a number $2\leq j \leq s-1$ is called a $t$-break if 
$$\left\lceil \frac{t}{j-1} \right\rceil>\left\lfloor \frac{t}{j+1}\right\rfloor +1 \: \left(\text{or equivalently if } \left\lceil \frac{t}{j-1} \right\rceil\neq \left\lfloor \frac{t}{j+1}\right\rfloor +1\right);$$
\item $\Sigma_{\leq r}(\un \lambda^1)=0, \ldots,   \Sigma_{\leq r}(\un \lambda^s)$ if $r\geq 4$.
\end{enumerate}
In particular 
$\Pi(t,r,r)$ is a $(r-1)$-dimensional simplex with vertices 
$\Sigma_{\leq r}(\un \lambda^1), \ldots,   \Sigma_{\leq r}(\un \lambda^r)$.
\end{prop}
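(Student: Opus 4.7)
My plan proceeds in three stages: (1) proving the dimension claim and reducing the vertex analysis to balanced partitions, (2) handling the small cases $r=2$ and $r=3$ by direct calculation, and (3) treating the generic case $r\ge 4$ via a moment-curve argument.

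For (1), the affine independence of $\Sigma_{\le r}(\underline{\lambda}^1),\ldots,\Sigma_{\le r}(\underline{\lambda}^r)$ is immediate: since $\underline{\lambda}^j$ has exactly $j$ nonzero entries, $\sigma_k(\underline{\lambda}^j)=0$ for $k>j$ and $\sigma_k(\underline{\lambda}^j)>0$ for $2\le k\le j$, so the matrix $(\sigma_k(\underline{\lambda}^j))_{2\le k,j\le r}$ is lower triangular with positive diagonal, yielding $\dim\Pi(t,s,r)=r-1$. To eliminate non-balanced partitions, I fix $\underline{\mu}\in\mathcal{P}_{\le s}(t)$ with $j$ nonzero entries and $\underline{\mu}\neq\underline{\lambda}^j$, pick two nonzero entries with $\mu_i-\mu_\ell\ge 2$ (they exist by non-balancedness), and define $\underline{\nu}^\pm\in\mathcal{P}_{\le s}(t)$ by replacing $(\mu_i,\mu_\ell)$ with $(\mu_i\mp 1,\mu_\ell\pm 1)$. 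A direct expansion of $\prod_l(1+\mu_l y)$ yields
$$
\sigma_k(\underline{\nu}^\pm)-\sigma_k(\underline{\mu})=\pm(\mu_i-\mu_\ell\mp 1)\,\sigma_{k-2}(\widehat{\underline{\mu}}),
$$
where $\widehat{\underline{\mu}}$ is $\underline{\mu}$ with the entries $\mu_i,\mu_\ell$ deleted. The two displacement vectors $\Sigma_{\le r}(\underline{\nu}^\pm)-\Sigma_{\le r}(\underline{\mu})$ are thus nonzero parallel vectors of opposite signs, exhibiting $\Sigma_{\le r}(\underline{\mu})$ as a strict convex combination of $\Sigma_{\le r}(\underline{\nu}^+)$ and $\Sigma_{\le r}(\underline{\nu}^-)$, so $\underline{\mu}$ contributes no vertex of $\Pi(t,s,r)$.

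For (2), the $r=2$ case is immediate since $\sigma_2(\underline{\lambda}^j)=\tfrac12(t^2-\sum_\ell(\lambda_\ell^j)^2)$ is strictly increasing in $j$, giving the segment with endpoints $P_1=0$ and $P_s$. For $r=3$, both coordinates of $P_j=(\sigma_2(\underline{\lambda}^j),\sigma_3(\underline{\lambda}^j))\in\mathbb{R}^2$ are monotone in $j$ and the consecutive slopes are non-decreasing, so $P_1,P_s$ are always vertices while an interior $P_j$ (with $2\le j\le s-1$) is a vertex if and only if $P_{j-1},P_j,P_{j+1}$ are not collinear. I would substitute $t=\alpha(j)j+\rho(j)$ into the explicit formula for $\sigma_k(\underline{\lambda}^j)$, compute the second difference $P_{j+1}-2P_j+P_{j-1}$, and check that its vanishing is equivalent to $\lceil t/(j-1)\rceil=\lfloor t/(j+1)\rfloor+1$; this is the negation of the $t$-break condition, and the ``or equivalently'' clause of the statement follows from the general inequality $\lceil t/(j-1)\rceil\ge\lfloor t/(j+1)\rfloor+1$, which is easy.

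For (3), the continuous balanced vector $(t/j,\ldots,t/j)$ gives $\sigma_k=\tfrac{t^k}{k!}\prod_{i=1}^{k-1}(1-i/j)$, a polynomial of degree $k-1$ in $u:=1/j$; so the continuous curve $u\mapsto(\sigma_2,\ldots,\sigma_r)$ becomes the standard moment curve $u\mapsto(u,u^2,\ldots,u^{r-1})$ in $\mathbb{R}^{r-1}$ after a triangular change of coordinates. For the moment curve in $\mathbb{R}^d$ with $d\ge 2$, every point is a vertex of the convex hull of any finite subset via the degree-two separator $-(y_1-c)^2$. In dimension $d=r-1\ge 3$, the additional coordinates $\sigma_4,\ldots,\sigma_r$ should guarantee that the same property persists for the discrete $P_j$'s despite the $\rho(j)$-correction, by providing enough $2\times 2$ minors to prevent any three discrete balanced points from being collinear; this contrasts sharply with $r=3$, where only the single minor in $(\sigma_2,\sigma_3)$ is available and can indeed vanish. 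The main obstacle is precisely this rigorous transfer of convex-position from the continuous moment curve to the discrete $P_j$'s: I expect it to reduce to a determinantal positivity statement for the exact values $\sigma_k(\underline{\lambda}^j)$, provable by induction on $r$ combined with the generating-function manipulations from stage (1).
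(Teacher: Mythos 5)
Your Stage (1) argument is sound and is in fact a pleasant alternative to the paper's route: you exhibit $\Sigma_{\le r}(\underline\mu)$ directly as a strict convex combination of $\Sigma_{\le r}(\underline\nu^+)$ and $\Sigma_{\le r}(\underline\nu^-)$ via the identity $\sigma_k(\underline\nu^\pm)-\sigma_k(\underline\mu)=\pm(\mu_i-\mu_\ell\mp1)\,\sigma_{k-2}(\widehat{\underline\mu})$, whereas the paper proves the same reduction (its Lemma \ref{elemfunc}) through an integral Chebyshev-type optimization statement (Sublemma \ref{posfunct}); the two are closely related (both hinge on the quadratic $z\mapsto f(z,c-z,\ldots)$) but your packaging is cleaner and dispenses with the separate $s>r$ projection step. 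Stage (2) is also viable, modulo a small slip: for $r=3$ the vertex criterion is collinearity of $P_{j-1},P_j,P_{j+1}$, i.e.\ the vanishing of $\det(P_j-P_{j-1},\,P_{j+1}-P_j)$, not the vanishing of the second-difference vector; the two happen to coincide here, but this should be made precise. The actual computation is a real case analysis (the paper carries it out via Newton's identities in Lemma \ref{L:vanishdet}), which you defer but correctly identify.

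Stage (3), however, has a genuine gap. You describe the moment-curve heuristic (the continuous balanced points trace a moment curve in $(\sigma_2,\ldots,\sigma_r)$ coordinates, so are in convex position), and then say the "rigorous transfer" to the discrete $P_j$'s "should reduce" to a determinantal positivity you have not proved. The transfer is precisely where the difficulty lies, and the mechanism you sketch does not close it: the claim that the extra coordinates $\sigma_4,\ldots,\sigma_r$ "prevent any three discrete balanced points from being collinear" is (a) not established, and (b) not logically sufficient — a point can lie in the convex hull of four coplanar neighbours without any triple being collinear, so absence of collinear triples does not give convex position. Indeed the discrete deviations $\rho(j)$ genuinely produce degenerate configurations (that is what the $t$-break phenomenon is for $r=3$), so the extra coordinates cannot be expected to put the points into general position. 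What the paper actually does is more structural (Lemma \ref{L:face-r3} and the surrounding projection argument): among indices $j$ sharing the same $\lfloor t/j\rfloor$, the points $\Sigma_{\le 4}(\underline\lambda^j)$ all lie in a common affine \emph{plane} of $\mathbb R^3$, and form the vertices of a convex polygon there (this is computed explicitly using Newton's identities and $\tau(j)$); then the $r=4$ case follows by projecting to $r=3$, using the break-point analysis to localize which $\lambda_j$'s can appear, and applying this planar polygon lemma, and $r\ge5$ follows by a further projection. This planar-degeneracy-plus-polygon structure is the missing ingredient your proposal does not supply.
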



We start by computing the dimension of 
$\Pi(t,s,r)$ and bounding the number of its vertices.  

\begin{lemma}\label{elemfunc}
Fix three natural numbers $t\geq s\geq r\geq 2$. Then the polytope $\Pi(t,s,r)$ has dimension  $r-1$ and its  vertices belong to the subset  $\{\Sigma_{\leq r}(\un \lambda^1)=0, \Sigma_{\leq r}(\un \lambda^2), \ldots, \Sigma_{\leq r}(\un \lambda^s)\}$. 
\end{lemma}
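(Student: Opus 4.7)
My plan is to prove the two assertions—dimension equals $r-1$ and vertices are restricted to the $s$ balanced partitions—independently.

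For the dimension, I would show that the $r$ points $\Sigma_{\leq r}(\un\lambda^1)=0,\Sigma_{\leq r}(\un\lambda^2),\ldots,\Sigma_{\leq r}(\un\lambda^r)$ are affinely independent. Their differences from the origin form an $(r-1)\times(r-1)$ matrix $M$ with $M_{j,k}=\sigma_k(\un\lambda^j)$ for $2\leq j,k\leq r$. Since $\un\lambda^j$ has exactly $j$ non-zero parts, $\sigma_k(\un\lambda^j)=0$ whenever $k>j$, while $\sigma_j(\un\lambda^j)>0$ as the product of $j$ positive integers. Thus $M$ is triangular with nonzero diagonal, hence invertible, and so $\dim\Pi(t,s,r)=r-1$.

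For the vertex claim, my approach is a smoothing argument based on the following linearity observation. Given two entries $a,b$ of a partition and writing rest for the remaining entries, one has
\[
\sigma_m((a',b',\text{rest}))=\sigma_m^{\text{rest}}+(a'+b')\,\sigma_{m-1}^{\text{rest}}+a'b'\cdot\sigma_{m-2}^{\text{rest}}.
\]
Fixing $S=a+b$ and letting $a',b'\geq 0$ vary subject to $a'+b'=S$, only the product $a'b'$ changes, so the images $\Sigma_{\leq r}((a',b',\text{rest}))$ trace out a segment in $\R^{r-1}$ parametrized linearly by $a'b'\in[0,\lceil S/2\rceil\lfloor S/2\rfloor]$; the direction of this segment has first coordinate $\sigma_0^{\text{rest}}=1$, so it is non-degenerate.

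Now if $\un\lambda\in\P_{\leq s}(t)$ is not balanced, then (selecting $a=\lambda_1$ and $b=\lambda_j$ where $j$ is the number of non-zero parts of $\un\lambda$) we find two non-zero parts with $a\geq b+2$ and $b\geq 1$, so $S\geq 4$ and $0<ab<\lceil S/2\rceil\lfloor S/2\rfloor$. Hence $\Sigma_{\leq r}(\un\lambda)$ is a strict convex combination of the two distinct endpoints $\Sigma_{\leq r}((S,0,\text{rest}))$ and $\Sigma_{\leq r}((\lceil S/2\rceil,\lfloor S/2\rfloor,\text{rest}))$, both coming from partitions in $\P_{\leq s}(t)$, so it cannot be a vertex of $\Pi(t,s,r)$. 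Since a partition whose non-zero parts all differ by at most one is by definition some balanced $\un\lambda^j$, this forces every vertex of $\Pi(t,s,r)$ to lie in $\{\Sigma_{\leq r}(\un\lambda^j):1\leq j\leq s\}$. I anticipate no serious obstacle: once the linearity of $\sigma_m$ in $a'b'$ is spotted, the whole question collapses to a one-dimensional convexity statement on a non-degenerate line segment.
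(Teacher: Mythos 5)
Your proposal is correct. The dimension argument (triangular matrix with nonzero diagonal entries $\sigma_j(\un\lambda^j)>0$) is essentially the paper's. For the vertex claim, however, you take a genuinely different route. The paper first handles the case $s=r$, where the $r$ balanced points are affinely independent and thus span a simplex $P$; it then proves $P=\Pi(t,r,r)$ by an H-representation argument: $P$ is an intersection of half-spaces $\{f_i\geq 0\}$, and Sublemma \ref{posfunct} shows that a symmetric function of the form $a_0+a_2\sigma_2+\cdots+a_r\sigma_r$ that is non-negative on all balanced partitions is non-negative everywhere. The case $s>r$ is then reduced to $s=r$ by observing that $\Sigma_{\leq r}$ factors through $\Sigma_{\leq s}$ followed by a coordinate projection. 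You instead argue directly on the V-representation: a non-balanced partition has two nonzero entries $a\geq b+2$, and since each $\sigma_m$ is affine in $ab$ once $a+b$ is fixed, the image $\Sigma_{\leq r}(\un\lambda)$ is a strict interior point of a non-degenerate segment whose endpoints are images of valid elements of $(\bbN^s)^{\sum=t}$, hence not a vertex. The underlying one-variable linearity (the paper writes it as $F(z)=\alpha z(c-z)+\beta$ in the proof of Sublemma \ref{posfunct}) is the same in both arguments, but you use it once, on the source side, whereas the paper bundles it into a separate positivity statement and needs a further projection step. Your version is more direct and avoids invoking the $s=r$ simplex structure, at the cost of not reproducing that extra piece of information; both are correct.
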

\begin{proof}
We will distinguish two cases, according to whether $s=r$ or $s>r$. 

\un{Case I}: $s=r$. 
For any $1\leq j \leq r$, the vector $\Sigma_{\leq r}(\un \lambda^j)$ has only the first $(j-1)$ coordinates different from zero. This shows that the points $\{\Sigma_{\leq r}(\un \lambda^1)=0, \Sigma_{\leq r}(\un \lambda^2), \ldots, \Sigma_{\leq r}(\un \lambda^r)\}$ are affinely independent; hence, their convex hull $P:=\conv(\{\Sigma_{\leq r}(\un \lambda^j)\}_{1\leq j \leq r})$ is a $(r-1)$-dimensional  simplex  whose vertices are exactly the points $\{\Sigma_{\leq r}(\un \lambda^j)\: : 1\leq j \leq r\}$.  Clearly $P$ is contained inside the convex hull of the image of $\Sigma_{\leq r}$, and we will be done if we show that it is equal to it. 

Now $P$ can be written as the intersection of $r$ half spaces $\{l_i\geq 0\}_{i=1,\ldots, r}$, where $H_i:= \{l_i=0\}$ are the supporting affine hyperplanes of $P$, i.e. those affine hyperplanes that cut out the $r$ facets of $P$.  
To show that the convex hull of the image of $\Sigma_{\leq r}$ is contained in $P$ (hence it is equal to it), it is enough to show that the image of $\Sigma_{\leq r}$ is contained in $\{l_i\geq 0\}$ for any $1\leq i \leq r$. Fix any such 
$H_i$ and note that its inverse via 
$\Sigma_{\leq r}$ is defined by a function $f_i=a_0+a_2\sigma_2+\ldots+a_r\sigma_r$, for some $a_j\in \R$.   
Since $\Sigma_{\leq r}(\un \lambda^j)\in P$ for any $1\leq j \leq r$, we get that $f_i(\un \lambda^j)\geq 0$ for any $1\leq j \leq r$. We conclude that $f_i\geq 0$ on the entire  space $(\bbN^s)^{\sum=t}$ by the next result. 

\begin{sublemma}\label{posfunct}
Consider a symmetric real valued function of the form  
\begin{equation*}
\begin{aligned}
f: (\bbN^r)^{\sum=t} & \longrightarrow \R\\
 \un x & \mapsto f(\un x)=a_0 +a_2\sigma_2(\un x)+\ldots+a_r\sigma_r(\un x),
 \end{aligned}
\end{equation*}
If $f(\un \lambda^j)\geq 0$ for every $1\leq j \leq r$ then $f$ is non-negative on the entire domain. 
\end{sublemma} 
\begin{proof}
This 
is an integral version of an old result over the real numbers due originally to Chebyshev \cite{Che}, and rediscovered (and generalized) many times since then (see \cite{KKR} and the references therein).
Our proof is a variant of the one in \cite[Lemma 2.4]{Mit}.

Observe that the elements $\un \lambda^j$, and their conjugates under the action of the symmetric group $S_{r}$ permuting the coordinates of $(\bbN^r)^{\sum=t}$, are exactly those elements of $(\bbN^r)^{\sum=t}$
such that any pair of non-zero entries is ``balanced", i.e. it is formed by two integers that are either equal or consecutive. Therefore, it is enough to show 
that any function $f$ as in the statement achieves its minimum (which exists since $(\bbN^r)^{\sum=t}$ is a finite set) on one of those elements. 

With this aim, take a minimum $\un w=(w_1, \ldots, w_r)$ of $f$. We can assume that there are at least two non-zero coefficients, say $w_1$ and $w_2$ up to permuting the indices (note that $f$ is a symmetric function), such that $|w_1-w_2|\geq 2$, for otherwise  we are trivially done. Set $
c\color{black}:=w_1+w_2$ 
(observe that $
c\color{black}\geq 3$) and consider the function
$$
\begin{aligned} 
F: \bbN\cap [0, 
c\color{black}] & \longrightarrow \R, \\
z & \mapsto f(z,  
c\color{black}-z, w_3, \ldots, w_r). 
\end{aligned}
$$
Now $F$ admits a very simple description. Indeed, since $f$ is an affine combination of elementary symmetric functions, we can write $f(x_1,\ldots, x_r)=a_1+(x_1+x_2)g(x_3,\ldots x_r)+ x_1x_2h(x_3,\ldots, x_r)$ for some functions $g$ and $h$ not depending on $x_1$ and $x_2$. 
Hence the function $F$ can be written as $F(z)=\alpha z(
c\color{black}-z)+\beta$ for some $\alpha, \beta \in \R$. We distinguish three cases:

\begin{enumerate}
\item if $\alpha=0$ then $F$ is constant;
\item if $\alpha>0$ then the only minimum points of $F$ are $0$ and $
c\color{black}$;
\item if $\alpha<0$ then $
F\color{black}$ achieves its minimum at one (or both) of the two points 
(with possibly equal values) \color{black}$\{\lfloor 
c\color{black}/2\rfloor, \lfloor 
c\color{black}/2\rfloor+1\}$    
\end{enumerate}
Note that by construction $F$ achieves its minimum at $w_1$. Since $1\leq w_1<w_1+w_2 
=c\color{black}$, case (2) cannot occur. Moreover, since $|w_1-w_2|\geq 2$, then case (3) cannot occur either. Hence, case (1) must occur, i.e. $F$ must be constant.  Therefore, the point 
$\un w':=(\lfloor 
c\color{black}/2\rfloor, 
c\color{black}-\lfloor 
c\color{black}/2\rfloor, w_3,\ldots, w_r)$ is such that $f(\un w')=f(\un w)$, which implies that 
also $\un w'$ is \color{black}a minimum for $f$. We have then constructed a new minimum of $f$ by leaving untouched the last $r-2$ coordinates and replacing the first two by a ``balanced" couple. Iterating this construction over all the pairs of non ``balanced" couple in a minimum, we finally arrive at a minimum for which every pair of non-zero elements is balanced. This proves Sublemma \ref{posfunct}.
\end{proof}

\un{Case II}: $s>r$. Observe that the map $\Sigma_{\leq r}$ is the composition of $\Sigma_{\leq s}$ with the projection $\pi:\R^{s-1}\to \R^{r-1}$ obtained by forgetting the last $s-r$ coordinates. Using the (previously proved) Case I  for the map $\Sigma_{\leq s}$, it follows that 
$$\conv(\Im \Sigma_{\leq r})=\pi(\conv(\Im \Sigma_{\leq s}))=\pi\big(\conv(\{\Sigma_{\leq s}(\un \lambda^j)\}_{1\leq j \leq s})\big)=\conv(\{\Sigma_{\leq r}(\un \lambda^j)\}_{1\leq j \leq s}).$$
It remains to observe that the first $r$ vectors $\{\Sigma_{\leq r}(\un \lambda^1)=0, \Sigma_{\leq r}(\un \lambda^2), \ldots, \Sigma_{\leq r}(\un \lambda^r)\}$ are affinely independent (same proof as in the previous case), which forces  
the polytope $\conv(\{\Sigma_{\leq r}(\un \lambda^j)\}_{1\leq j \leq s})$ to have dimension $(r-1)$ and we are done.
\end{proof}

Before giving a proof of the Proposition \ref{P:vertpoly}, we record in the following remarks two facts about elementary symmetric functions that we are going to use several times during the proof.  


\begin{remark}\label{R:schur}
It is easy to check that the balanced partitions form a chain under majorization partial order (in the sense of \cite[Chap. I, A.1]{MOA}):
$$\un \lambda^s\prec\ldots \prec \un \lambda^1.$$ 

Since, for any $2\leq k \leq n$,  the function $\sigma_k$ is strictly Schur-concave (see \cite[Chap. III, F.1]{MOA}) and the function $\frac{\sigma_k}{\sigma_{k-1}}$ is Schur-concave (see \cite[Chap. III, F.3]{MOA}), it follows that
\begin{equation}\label{E:inc-coord}
0= \sigma_k(\un \lambda^1)<\sigma_k(\un \lambda^2) \ldots <\sigma_k(\un \lambda^s),
 \end{equation}
\begin{equation}\label{E:inc-slope}
 \frac{\sigma_k(\un \lambda^{k-1})}{\sigma_{k-1}(\un \lambda^{k-1})}\leq \ldots \leq \frac{\sigma_k(\un \lambda^s)}{\sigma_{k-1}(\un \lambda^s)}.
\end{equation}
\end{remark}


\begin{remark}\label{R:Newt}
In what follows, we will have to compute the difference $\sigma_k(\un \lambda^{j_1})-\sigma_k(\un \lambda^{j_2})$ for small values of $k$, namely $k=2,3,4$. A convenient way to achieve this is to use Newton's identities 
between elementary and power sum symmetric polynomials
$$k\sigma_k=\sum_{i=1}^k (-1)^{i-1}\sigma_{k-i}p_i,$$
where 
each \color{black}$p_i(x_1,\ldots,x_s):=\sum_{j=1}^s x_j^i$ 
is the $i$-th power sum polynomial. Using the above formula, we can explicitly write 
\begin{equation}\label{E:Newtrel}
\sigma_1=p_1, \ \sigma_2=\frac{p_1^2-p_2}{2}, \ \sigma_3=\frac{p_1^3-3p_1p_2+2p_3}{6}, \ \sigma_4 = \frac{p_1^4-6p_1^2p_2+8p_1p_3+3p_2^2-6p_4}{24}.
\end{equation}


The advantage of the power sum polynomials is that they are easy to compute for the balanced partitions. Indeed, writing $t=\alpha(j) j+\rho(j) $ as in \eqref{E:expr} and setting $\t(j):=\r(j)(\a(j)+1)$, we can easily evaluate the $k$-th power sum polynomial 
on the balanced partition $\un \lambda^j$ of  \eqref{balpart2} 
\begin{equation}\label{E:p-lambda}
\begin{aligned}
& p_k(\underline{\lambda}^j) = \rho(j) (\alpha(j) +1)^k+(j-\rho(j))\alpha(j)^k 
= j \alpha(j)^k+\rho(j) [(\alpha(j)+1)^k-\alpha(j)^k]\\
& =(t-\rho(j)) \alpha(j)^{k-1} + \rho(j)[(\alpha(j)+1)^k-\alpha(j)^{
k\color{black}}]
= t\alpha(j)^{k-1}+[(\alpha(j)+1)^{k-1}-\alpha(j)^{k-1}]\tau(j).
\end{aligned}
\end{equation}
\end{remark}


We can now give a proof of Proposition \ref{P:vertpoly} distinguishing between the three cases $r=2$, $r=3$ and $r\geq 4$. The case $r=2$ is very easy. 

\begin{proof}[Proof of Proposition \ref{P:vertpoly} for $r=2$]
In this case, $\Pi(t,s,2)$ is, by Lemma \ref{elemfunc}, a $1$-dimensional polytope, i.e. a segment, and hence it has two vertices. Using \eqref{E:inc-coord} for $k=2$, we conclude that the two vertices are $ \sigma_2(\un \lambda^ 1)=\Sigma_{\leq 2}(\un \lambda^1)=0$ and $\sigma_2(\un \lambda^s)= \Sigma_{\leq 2}(\un \lambda^s)$.
\end{proof}


The proof of Proposition \ref{P:vertpoly} in the case $r=3$ is based on the following result.

\begin{lemma}\label{L:vanishdet}
Consider three natural numbers $1\leq j_1<j_2<j_3\leq s$. Then 
\begin{equation}\label{E:det>0}
D(j_1,j_2,j_3):=\det \begin{pmatrix}
\sigma_2(\un \lambda^{j_2})-\sigma_2(\un \lambda^{j_1}) & \sigma_2(\un \lambda^{j_3})-\sigma_2(\un \lambda^{j_2}) \\
\sigma_3(\un \lambda^{j_2})-\sigma_3(\un \lambda^{j_1}) & \sigma_3(\un \lambda^{j_3})-\sigma_3(\un \lambda^{j_2}) \\
\end{pmatrix}\geq 0
\end{equation}
with equality if and only if
$\displaystyle \left\lceil \frac{t}{j_1} \right\rceil=\left\lfloor \frac{t}{j_3}\right\rfloor +1$.
\end{lemma}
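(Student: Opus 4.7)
First I would recast the determinant in terms of the power sums $p_k(\un\lambda^j)$, where the convexity structure of the problem becomes transparent. By Newton's identities (Remark~\ref{R:Newt}), $2\sigma_2 = t^2 - p_2$ and $6\sigma_3 = t^3 - 3tp_2 + 2p_3$, so elementary column operations rewrite the determinant as
$$D(j_1,j_2,j_3) = \frac{1}{6}\bigl[\Delta_{23}p_2\cdot\Delta_{12}p_3 - \Delta_{12}p_2\cdot\Delta_{23}p_3\bigr],$$
where $\Delta_{ik}f := f(\un\lambda^{j_k}) - f(\un\lambda^{j_i})$. Since $p_2$ is strictly Schur-convex and $\un\lambda^{j_1}\succ\un\lambda^{j_2}\succ\un\lambda^{j_3}$ in majorization (Remark~\ref{R:schur}), both $\Delta_{12}p_2$ and $\Delta_{23}p_2$ are negative, so $D(j_1,j_2,j_3)\geq 0$ reduces to the discrete convexity inequality $\Delta_{12}p_3/\Delta_{12}p_2 \geq \Delta_{23}p_3/\Delta_{23}p_2$ for the curve $j\mapsto(p_2(\un\lambda^j),p_3(\un\lambda^j))$.

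To establish this convexity, I would partition $\{1,\ldots,s\}$ into the level sets $J_\alpha := \{j : \alpha(j)=\alpha\}$ and use the explicit formulas $p_2(\un\lambda^j) = t\alpha_j + \tau_j$ and $p_3(\un\lambda^j) = t\alpha_j^2 + (2\alpha_j+1)\tau_j$ from Remark~\ref{R:Newt}. Within each $J_\alpha$ only $\tau_j$ varies (linearly in $j$), so the curve moves on a straight line of slope exactly $2\alpha+1$. For a transition from $j\in J_\alpha$ to $j+1\in J_{\alpha'}$ with $\alpha'<\alpha$, substituting $\tau_j=(t-j\alpha)(\alpha+1)$ and $\tau_{j+1}=(t-(j+1)\alpha')(\alpha'+1)$ and expanding yields the two identities
\begin{align*}
\Delta p_3 - (2\alpha'+1)\Delta p_2 &= (\alpha'-\alpha)\bigl[t(\alpha-\alpha'-1) + 2\tau_j\bigr], \\
\Delta p_3 - (2\alpha+1)\Delta p_2 &= (\alpha'-\alpha)\bigl[t(\alpha'-\alpha-1) + 2\tau_{j+1}\bigr];
\end{align*}
combined with $t\geq j\alpha$, $t<(j+1)(\alpha'+1)$, and $\alpha\geq\alpha'+1$, these give the transition slope bounds $2\alpha'+1 \leq \Delta p_3/\Delta p_2 \leq 2\alpha+1$, with the lower bound tight iff $\tau_j=0$ and $\alpha'=\alpha-1$ (equivalently, $j\mid t$ and blocks are consecutive), and the upper bound never tight (equality would force $\rho_{j+1}=j+1$, violating $\rho_{j+1}<j+1$). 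Since $\alpha(j)$ is non-increasing in $j$, concatenating the within-block and transition slopes gives a non-increasing consecutive slope sequence $s_k := \Delta_{k,k+1}p_3/\Delta_{k,k+1}p_2$, and a standard positive-weight averaging argument (viewing $\Delta_{12}p_3/\Delta_{12}p_2$ and $\Delta_{23}p_3/\Delta_{23}p_2$ as averages of the $s_k$ over $[j_1,j_2-1]$ and $[j_2,j_3-1]$ respectively) then yields $D(j_1,j_2,j_3)\geq 0$.

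For the equality condition, $D=0$ forces all $s_k$ with $j_1\leq k<j_3$ to equal a common value $c$. A case analysis, distinguishing whether $j_1=\max J_{\alpha(j_1)}$ and $j_3=\min J_{\alpha(j_3)}$, and which blocks contribute within-block slopes, shows equality occurs exactly in two scenarios: (i) $\alpha_1=\alpha_3$ with all three indices in $J_{\alpha_1}$, which forces $j_1\nmid t$ (otherwise $j_1=\max J_{\alpha_1}$, contradicting $j_3>j_1$ in the same block); or (ii) $\alpha_1=\alpha_3+1$ with $j_1\mid t$ and $j_2,j_3\in J_{\alpha_3}$, where the transition slope at $j_1$ hits the tight lower bound $2\alpha_3+1$ and matches the within-$J_{\alpha_3}$ slopes. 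Non-consecutive cases $\alpha_1>\alpha_3+1$ are ruled out because any intermediate block $J_\beta$ either contributes a within-block slope $2\beta+1\neq c$ or, in a chain of pure transitions across single-element intermediate blocks, forces $c$ into an empty intersection of nested open intervals $(2\beta_i+1,2\beta_{i-1}+1)$. Translating via $\alpha(j)=\lfloor t/j\rfloor$, with $\lceil t/j\rceil = \alpha(j)+1$ when $j\nmid t$ and $=\alpha(j)$ when $j\mid t$, scenarios (i) and (ii) are jointly equivalent to the stated condition $\lceil t/j_1\rceil = \lfloor t/j_3\rfloor+1$. The main obstacle will be the equality case: ruling out the non-consecutive scenarios requires carefully tracking both the within-block slopes and the strict open-interval bounds for transitions across skipped or single-element blocks.
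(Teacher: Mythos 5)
Your proof is correct, but it takes a genuinely different route from the paper's after the shared first step (passing via Newton's identities to the power sums and reducing to the sign of $\Delta_{23}p_2\cdot\Delta_{12}p_3-\Delta_{12}p_2\cdot\Delta_{23}p_3$). The paper normalizes the decompositions $t=\alpha_i j_i+\rho_i$ so that the condition $\lceil t/j_1\rceil=\lfloor t/j_3\rfloor+1$ becomes $\alpha_1=\alpha_2=\alpha_3$, expands $6D$ completely as a polynomial in $m:=\alpha_1-\alpha_2$, $n:=\alpha_2-\alpha_3$, $t$, $\tau_i$, and finishes with a short three-way case analysis ($m=0$, $n=0$, $m,n\geq 1$) in which each term is separately signed. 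You instead interpret $D\geq 0$ as a discrete chord-slope convexity statement for the piecewise curve $j\mapsto (p_2(\un\lambda^j),p_3(\un\lambda^j))$ and prove that the consecutive-slope sequence $s_k$ is non-increasing, with within-block slope exactly $2\alpha+1$ and transition slope lying in $[2\alpha'+1,2\alpha+1)$; the chord inequality then follows by a weighted-average argument with strictly positive weights $\Delta_{k,k+1}p_2/\Delta_{12}p_2$. This buys a more geometric explanation of why $D\geq 0$ and makes the equality case visible as ``all $s_k$ on a common value,'' whereas the paper's computation is shorter on the equality case. I checked your two transition identities and the resulting bounds; they are correct, and the subtle boundary issue ($\alpha'=0$ forcing $\tau_{j+1}=t$, which could make the upper bound tight) is excluded because $j+1\leq s\leq t$ forces $\alpha(j+1)\geq 1$.

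Two small caveats. First, the phrase ``nested open intervals'' in the equality discussion should be ``adjacent, disjoint half-open intervals'': the intervals $[2\beta_i+1,2\beta_{i-1}+1)$ for consecutive transitions share only a boundary point, and the left one is closed where the right one is open, so two consecutive transition slopes can never coincide; that is the mechanism ruling out $\alpha_1>\alpha_3+1$. Second, the equality case is sketched rather than carried out, but the skeleton (at most one transition can occur in $[j_1,j_3)$, it must be at $k_0=j_1$, and it must be tight, which with your convention $\alpha(j)=\lfloor t/j\rfloor$ gives exactly your scenarios (i) and (ii), both translating to $\lceil t/j_1\rceil=\lfloor t/j_3\rfloor+1$) is sound and fills in correctly.
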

\begin{proof}
By the assumptions on the $j_i$'s, we can find three expressions as in \eqref{E:expr} that satisfy 
\begin{equation}\label{E:3divi}
\begin{sis}
t=\alpha_1 j_1+\rho_1 & \text{ for some } 0<\rho_1\leq j_1 , \\
t=\alpha_2 j_2+\rho_2 & \text{ for some } 0\leq\rho_2\leq j_2 , \\
t=\alpha_3 j_3+\rho_3 & \text{ for some } 0\leq \rho_3< j_3 , \\
\end{sis} 
\end{equation}
where we have set, for simplicity, $\a_h:=\alpha(j_h)$ and $\r_h:=\rho(j_h)$ for any $h=1,2,3$. For later use, we also set  $\tau_h:=\r_h(\a_h+1)$ and observe that the inequalities on the $\r_h$'s in \eqref{E:3divi}  are equivalent to the following inequalities on the $\t_h$'s:
\begin{equation}\label{E:ineqtau}
\begin{sis}
 0<\t_1=\r_1(\a_1+1)\leq j_1\a_1+\r_1=t, \\
 0\leq \t_2=\r_2(\a_2+1)\leq j_2\a_2+\r_2=t, \\
0\leq \t_3=\r_3(\a_3+1)< j_3\a_3+\r_3=t. \\
\end{sis} 
\end{equation}
In terms of the above decompositions \eqref{E:3divi}, it is easy to see that 
\begin{equation*}
\left\lceil \frac{t}{j_1} \right\rceil=\left\lfloor \frac{t}{j_3}\right\rfloor +1
\Longleftrightarrow \a_1=\a_2=\a_3. 
\end{equation*}
Therefore, we have to show that $D(j_1,j_2,j_3)\geq 0$ with equality  exactly when  $\alpha_1=\alpha_2=\alpha_3$. 

We compute $D(j_1,j_2,j_3)$.
Since $\sigma_1(\un \lambda^{j_i})=p_1(\un \lambda^{j_i})=t$ for 
$i=1,2,3$, by subtracting  to the second row the first row multiplied by $t$ and using 
\eqref{E:Newtrel}, we get that $D(j_1,j_2,j_3)$ is equal to 
\begin{equation}\label{E:det-p}
-\frac{1}{6}  \det \begin{pmatrix}
p_2(\un \lambda^{j_2})-p_2(\un \lambda^{j_1}) & p_2(\un \lambda^{j_3})-p_2(\un \lambda^{j_2}) \\
p_3(\un \lambda^{j_2})-p_3(\un \lambda^{j_1}) & p_3(\un \lambda^{j_3})-p_3(\un \lambda^{j_2}) \\
\end{pmatrix}= -\frac{1}{6}\sum_{h=1}^3 \left[p_2(\un \lambda^{j_h})p_3(\un \lambda^{j_{h+1}})- p_2(\un \lambda^{j_{h+1}})p_3(\un \lambda^{j_h})\right],
\end{equation} 
with the modulo $3$ convention $j_4:=j_1$. 
Using formulas \eqref{E:p-lambda}, we compute (for any $h=1,2,3$)
\begin{equation*}
p_2(\un \lambda^{j_h})p_3(\un \lambda^{j_{h+1}})- p_2(\un \lambda^{j_{h+1}})p_3(\un \lambda^{j_h})=t^2\a_h\a_{h+1}(\a_{h+1}-\a_h)+t\t_{h+1}\a_h(2\a_{h+1}-\a_h+1)+
\end{equation*}
$$-t\t_h\a_{h+1}(2\a_h-\a_{h+1}+1)+2\t_h\t_{h+1}(a_{h+1}-\a_h). 
$$
Substituting the above formulas for each value of $h$  into \eqref{E:det-p} and grouping together all the terms with $t^2$, $t\r_h$ and $\r_h\r_k$, we reach the  formula 
\begin{equation}\label{E:fordet}
6 D(j_1,j_2,j_3) = t^2(\a_1-\a_2)(\a_2-\a_3)(\a_1-\a_3)+ t\t_1(\a_2-\a_3)(2\a_1-\a_2-\a_3+1)+
\end{equation}
$$  -t\t_2(\a_1-\a_3)(2\a_2-\a_1-\a_3+1)+t\t_3(\a_1-\a_2)(2\a_3-\a_1-\a_2+1)+$$
$$+2\t_1\t_2(\a_1-\a_2)-2\t_1\t_3(\a_1-\a_3)+2\t_2\t_3(\a_2-\a_3).$$ 

\vspace{0.1cm}

We now use \eqref{E:fordet} to conclude. 
First, 
it is clear from 
\eqref{E:fordet} that $D(j_1,j_2,j_3)=0$ when $\a_1=\a_2=\a_3$. So we will assume that either $\a_1>\a_2$ or $\a_2>\a_3$ and prove that $D(j_1,j_2,j_3)>0$.

If we set $m:=\a_1-\a_2\geq 0$ and $n:=\a_2-\a_3\geq 0$, then 
\eqref{E:fordet} becomes
\begin{equation}\label{E:newfor}
6 D(j_1,j_2,j_3):= nm(m+n)t^2+n(2m+n+1)t\t_1+[m(m-1)-n(n+1)]t\t_2-m(m+2n-1)t\t_3+
\end{equation}
$$
+2m\t_1\t_2-2(n+m)\t_1\t_3+2n\t_2\t_3.
$$
To prove the positivity of $D(j_1,j_2,j_3)$, we will distinguish three cases:

\begin{enumerate}

\item If $m=0$ (and hence $n\geq 1$) then 
\eqref{E:newfor} becomes
$$6 D(j_1,j_2,j_3)=n(n+1)t\t_1-n(n+1)t\t_2-2n\t_1\t_3+2n\t_2\t_3=n[(n+1)t-2\t_3](\t_1-\t_2).
$$
Then we get that $D(j_1,j_2,j_3)>0$ because $\t_1>\t_2$ since $\a_1=\a_2$ and $\r_1>\r_2$ (using that $j_1<j_2$) and $t>\t_3$ by  \eqref{E:ineqtau}.

\item If $n=0$ (and hence $m\geq 1$) then 
\eqref{E:newfor} becomes
$$6 D(j_1,j_2,j_3)=m(m-1)t\t_2-m(m-1)t\t_3+2m\t_1\t_2-2m\t_1\t_3=m(\t_2-\t_3)[(m-1)t+2\t_1].
$$
Then we get that $D(j_1,j_2,j_3)>0$  because $\t_2>\t_3$ since $\a_2=\a_3$ and $\r_2>\r_3$ (using that $j_2<j_3$) and $\t_1>0$ by \eqref{E:ineqtau}.

\item If $m\geq 1$ and  $n\geq 1$ then we factorize 
\eqref{E:newfor} as follows
$$
6 D(j_1,j_2,j_3)=n(mt-\t_2)[(n+1)t-2\t_3]+m(m-1)(nt+\t_2-\t_3)t+[n(2m+n+1)t-2(n+m)\t_3]\t_1+
$$
$$
+2m\t_1\t_2.
$$
We now examine the non-negativity or positivity of each of the four terms appearing in the above formula (using the inequalities \eqref{E:ineqtau}):
\begin{itemize}
\item $n(mt-\t_2)[(n+1)t-2\t_3]\geq 0$ since $t\geq \t_2, \t_3$. 
\item $m(m-1)(nt+\t_2-\t_3)t\geq 0$ since $t\geq \t_3$.
\item $[n(2m+n+1)t-2(n+m)\t_3]\t_1>0$ since $n(2m+n+1)\geq 2(n+m)$ and $t>\t_3$.
\item $2m\t_1\t_2\geq 0$ since $\t_1,\t_2\geq 0$. 
\end{itemize}
It follows that $D(j_1j_2,j_3)$ is positive also in this case.
\end{enumerate}
\end{proof}

\begin{proof}[Proof of Proposition \ref{P:vertpoly} for $r=3$]
In this case $\Pi(t,s,3)$ is, by Lemma \ref{elemfunc}, a $2$-dimensional polytope in $\bbR^2$, i.e. a polygon. 
Observe that the inequalities \eqref{E:inc-coord} for $k=2,3$ and \eqref{E:inc-slope} for $k=3$ imply that the points $P^j:=\Sigma_{\leq 3}(\un \lambda^j)\in \bbR^2$, for $2\leq j\leq s$, have strictly increasing positive coordinates and non-decreasing slopes.
From this it follows easily that the 
two points $\{P^1=0, P^s\}$ \color{black}are always vertices of $\Pi(t,s,3)$. 

It remains to show that if 
$s\geq 3$ and $2\leq j\leq s-1$ \color{black}then the point $P^j$ is a vertex of $\Pi(t,s,3)$ if and only if $j$ is a $t$-break. 

If $j$ is not a $t$-break, \color{black} we have that
$\left\lceil \frac{t}{j-1} \right\rceil=\left\lfloor \frac{t}{j+1}\right\rfloor +1$,
hence Lemma \ref{L:vanishdet} implies that the slope of the vector $\overrightarrow{P^{j-1}P^j}$ is equal to the slope of the vector $\overrightarrow{P^jP^{j+1}}$. 
Since the 
points $\{P^{j-1}, P^j, P^{j+1}\}$ have increasing coordinates, we 
get that  $P^j$ lies in the segment $\left[ P^{j-1}, P^{j+1}\right]$, hence it is not a vertex of $\Pi(t,s,3)$.

Assume now that $j$ is a $t$-break. Since both coordinates of $P^j$ are increasing with $j$, to see that $P^j$ is a vertex of $\Pi(t,s,3)$ it is enough to show that for any $h<j<k$ the slope of the vector $\overrightarrow{P^hP^j}$ is strictly less than the slope
of $\overrightarrow{P^jP^k}$. This follows by Lemma \ref{L:vanishdet} and
$$\left\lceil \frac{t}{h} \right\rceil>\left\lfloor \frac{t}{k}\right\rfloor +1$$
which holds because $j$ is a $t$-break. 
\end{proof}

 


The proof of Proposition \ref{P:vertpoly} in the case $r\geq 4$ is based on the following result.


\begin{lemma}\label{L:face-r3}
Consider $l\geq 3$ natural numbers $1\leq j_1< \ldots <j_l\leq s$ such that 
$$\left\lceil \frac{t}{j_1} \right\rceil=\left\lfloor \frac{t}{j_l} \right\rfloor+1.$$ 
Then the points $\{\Sigma_{\leq  4}(\un \lambda^{j_1}), \ldots, \Sigma_{\leq 4}(\un \lambda^{j_l})\}$ are the vertices of a polygon 
in an affine plane of $\bbR^3$.
\end{lemma}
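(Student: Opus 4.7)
The plan is to show that the $l$ points $\Sigma_{\leq 4}(\un\lambda^{j_h})$ for $h=1,\dots,l$ all lie on a parabolic arc inside some affine plane of $\bbR^3$, and then invoke the elementary fact that $l\geq 3$ distinct points on a parabolic arc are automatically in strictly convex position (any three consecutive ones are non-collinear), so that they form the vertices of a convex polygon.

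The first step is to extract from the hypothesis that a single value of $\alpha$ is attached to every $j_h$. By the same argument used at the start of the proof of Lemma \ref{L:vanishdet}, the condition $\lceil t/j_1\rceil=\lfloor t/j_l\rfloor+1$ forces $\alpha(j_1)=\alpha(j_2)=\cdots=\alpha(j_l)$ (one can choose the decompositions \eqref{E:expr} with the same boundary conventions as in \eqref{E:3divi}); call this common value $\alpha$. Since the $j_h$ are pairwise distinct and bounded above by $s\leq t$, one also checks that $\alpha\geq 1$. Setting $\rho_h:=t-\alpha j_h$ and $\tau_h:=\rho_h(\alpha+1)$, the sequence $\tau_1>\tau_2>\cdots>\tau_l$ is then strictly monotone, so the $\tau_h$ are pairwise distinct.

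The core is then a direct computation. Specializing \eqref{E:p-lambda} gives
\begin{equation*}
p_k(\un\lambda^{j_h}) = t\,\alpha^{k-1} + \bigl[(\alpha+1)^{k-1}-\alpha^{k-1}\bigr]\,\tau_h,
\end{equation*}
so for each fixed $k$ the map $h\mapsto p_k(\un\lambda^{j_h})$ is affine-linear in $\tau_h$, with slope $B_k:=(\alpha+1)^{k-1}-\alpha^{k-1}$; note in particular $B_2=1\neq 0$. Substituting into Newton's identities \eqref{E:Newtrel}, and using that $p_1=t$ is independent of $h$, one finds that both $\sigma_2(\un\lambda^{j_h})=\tfrac{t^2-p_2}{2}$ and $\sigma_3(\un\lambda^{j_h})=\tfrac{t^3-3tp_2+2p_3}{6}$ are affine-linear (with non-zero slopes) in $\tau_h$, whereas $\sigma_4(\un\lambda^{j_h})=\tfrac{t^4-6t^2p_2+8tp_3+3p_2^2-6p_4}{24}$ is a \emph{genuine quadratic} polynomial in $\tau_h$, the coefficient of $\tau_h^2$ coming from the $3p_2^2$ term and being equal to $3B_2^2/24=1/8\neq 0$.

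Consequently, all the points $\Sigma_{\leq 4}(\un\lambda^{j_h})$ satisfy one non-trivial affine-linear relation between their $\sigma_2$- and $\sigma_3$-coordinates, so they lie in a common affine plane $\Pi\subset\bbR^3$. Inside $\Pi$, the parametrization $\tau_h\mapsto\Sigma_{\leq 4}(\un\lambda^{j_h})$ traces the graph of a non-degenerate quadratic function (linear in $\sigma_2$, quadratic in $\sigma_4$), i.e.\ a parabolic arc; since the $\tau_h$ are distinct, the $l\geq 3$ points are distinct points on a parabola and hence in strictly convex position, proving the lemma. The one slightly delicate point is the initial extraction of a common $\alpha$ for all $l$ indices (a straightforward generalization of the three-index argument in Lemma \ref{L:vanishdet}); the rest is essentially a one-variable computation with Newton's identities.
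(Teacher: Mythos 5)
Your proof is correct and follows essentially the same route as the paper: both extract a common $\alpha=\lfloor t/j_l\rfloor$ from the hypothesis, use \eqref{E:p-lambda} and Newton's identities to express $\sigma_2,\sigma_3$ affine-linearly and $\sigma_4$ quadratically in the strictly decreasing parameter $\tau_h$, and then conclude convexity (the paper via the monotone-slope computation in \eqref{E:sig-diff}--\eqref{E:difvec}, you via the equivalent observation that distinct points on a parabola are in strictly convex position). The only minor quibble is that you claim the affine-linear map $\tau_h\mapsto\sigma_3(\un\lambda^{j_h})$ has non-zero slope, which need not hold (e.g.\ $t=2,\alpha=1$); this is harmless since coplanarity only requires the $\sigma_2$-slope to be non-zero, which it is.
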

\begin{proof}
Set $\alpha=\lfloor \frac{t}{j_l}\rfloor$. By the assumptions on the $j_k$'s we have expressions as in \eqref{E:expr} of the form
\begin{equation}\label{E:manydiv}
t=\alpha j_k+\r_k \text{ such that } \r_k\in 
\begin{cases}
(0,j_1] & \text{if } k=1, \\
(0,j_k) & \text{if } 1<k<l, \\
[0,j_j) & \text{if } k=l.
\end{cases}
\end{equation}

Set now $Q^k:=\Sigma_{\leq 4}(\un \lambda^{j_k})=(\sigma_2(\un \lambda^{j_k}), \sigma_3(\un \lambda^{j_k}), \sigma_4(\un \lambda^{j_k}))\in \bbR^3$ and let us compute the vector $\overrightarrow{Q^{h}Q^{k}}=Q^{k}-Q^{h}=(\sigma_2(\un \lambda^{j_k})-\sigma_2(\un \lambda^{j_h}) , \sigma_3(\un \lambda^{j_k})-\sigma_3(\un \lambda^{j_h}), \sigma_4(\un \lambda^{j_k})-\sigma_4(\un \lambda^{j_h}))\in \bbR^3$ for $1\leq h<k\leq l$. Using formula \eqref{E:p-lambda} and setting $\t_k:=\r_k(\a_k+1)$ as usual, we get that 
$$
\begin{sis}
p_n(\underline{\lambda}^{j_k})-p_n(\underline{\lambda}^{j_h}) &= f_n(\alpha)(\tau_k-\tau_h) \: \text{ with } f_n(\alpha)=(\alpha+1)^{n-1}-\alpha^{n-1}\: \text{ for any } n\geq 2,\\
p_2(\underline{\lambda}^{j_k})^2-p_2(\underline{\lambda}^{j_h})^2 &=(2t\alpha+\tau_k+\tau_h)(\tau_k-\tau_h).
\end{sis}
$$
From these formulas and Newton's identities \eqref{E:Newtrel}, we obtain 
\begin{equation}\label{E:sig-diff}
\begin{sis}
\sigma_2(\underline{\lambda}^{j_k})-\sigma_2(\underline{\lambda}^{j_h}) &={1\over 2}(\tau_h-\tau_k),\\
\sigma_3(\underline{\lambda}^{j_k})-\sigma_3(\underline{\lambda}^{j_h}) &= 
             \left[{1\over 2}t-{1\over 3}f_3(\alpha)\right](\tau_h-\tau_k), \\
\sigma_4(\underline{\lambda}^{j_k})-\sigma_4(\underline{\lambda}^{j_h}) &=
\left[{1\over 4}t^2-{1\over 3}tf_3(\alpha)-{1\over 8}(2t\alpha+\tau_k+\tau_h)+{1\over 4}f_4(\alpha)\right](\tau_h-\tau_k) .
\end{sis}
\end{equation}
Hence we compute 
\begin{equation}\label{E:difvec}
\overrightarrow{Q^{h}Q^{k}}=Q^{k}-Q^{h}=(\t_h-\t_k) \left(a, b, c-\frac{\t_h+\t_k}{8} \right) \: \text{ for any }  1\leq h<k\leq l,
\end{equation}
where $a={1\over 2}$, $ b={1\over 2}t-{1\over 3}f_3(\alpha)$, $c={1\over 4}t^2-{1\over 3}tf_3(\alpha)-{1\over 4}t\alpha+ {1\over 4}f_4(\alpha)$.
From the above expression, we deduce that the points $Q^k$ lie in an affine plane of $\bbR^3$ which is a translate of the linear plane $\{bx-ay=0\}$, where we denote by $(x,y,z)$ the three coordinates of $\bbR^3$. If we take $\{x,z\}$ as the coordinates of this plane, then the slope of the vector  $\overrightarrow{Q^{h}Q^{k}}$ is equal to 
$$\text{slope}(\overrightarrow{Q^{h}Q^{k}})=\frac{1}{a}\left(c-\frac{\t_h+\t_k}{8}\right).$$
Since $\tau_1>\tau_2>\ldots>\tau_l$, we 
find that the slope of $\overrightarrow{Q^{h}Q^{k}}$ is an increasing function of $k\in [h+1,l]$. This is enough to conclude that the 
$Q^k$'s are the vertices of a polytope in their affine span. 
\end{proof}


\begin{proof}[Proof of Proposition \ref{P:vertpoly} for $r\geq 4$]
We have to show that all the points $Q_r^j:=\Sigma_{\leq r}(\un \lambda^j)$ with $1\leq j \leq s$ are vertices of the $(r-1)$-dimensional polytope $\Pi(t,s,r)$ in $\bbR^{r-1}$. Since we know that the convex hull of 
$\{Q_r^j\}_{1\leq j \leq s}$  is 
$\Pi(t,s,r)$ by Lemma \ref{elemfunc}, it remains to show  that the 
$\{Q_r^j\}_{1\leq j\leq s}$ are convex independent, i.e. no one of them is in the convex hull of the remaining ones. 

Let us first prove this assertion for  $r=4$. Assume by contradiction that there is a point $Q_4^h$ which is in the convex hull of the points $\{Q_4^j\}_{j\neq h}$, i.e.
\begin{equation}\label{E:convrel1}
Q_4^h=\sum_{j\neq h} \lambda_j Q_4^j \text{ with } \lambda_j\geq 0 \text{ and } \sum_{j\neq h} \lambda_j = 1.
\end{equation}
Consider the projection $\pi:\bbR^ 3\to \bbR^2$ onto the first two coordinates, which clearly sends $Q_4^j$ into $Q_3^j$. It follows that $Q_3^h$ is in the convex hull of the points $\{Q_3^j\}_{j\neq h}$ and more precisely that 
$$Q_3^h=\sum_{j\neq h} \lambda_j Q_3^j.$$
From  Proposition \ref{P:vertpoly} in the case $r=3$ (which was proved above), it follows that $h$ is different from $1$ and $s$ and it is not a $t$-break point. Denote by $h^-$ the biggest $t$-break point smaller than $h$ (note that $h^-\geq 2$ since $2$ is always a $t$-break point), and denote by $h^+\leq s$ the smallest $t$-break point greater than $h$ or $s$ if there is not such a break point. From the proof of Proposition \ref{P:vertpoly} it follows that $Q_3^h$ lies in the segment $[Q_3^{h^-}, Q_3^{h^+}]$ which is en edge of the polygon $\Pi(t,s,3)$, and moreover that the unique points $Q_3^j$ that lie on the edge $[Q_3^{h^-}, Q_3^{h^+}]$ are those belonging to the set $\{Q_3^j\}_{h^-\leq j \leq h^+}$.
From this it follows that $\lambda_j=0$ if $j$ does not belong to the interval $[h^-, h^+]$, and hence we can rewrite \eqref{E:convrel1} as
\begin{equation}\label{E:convrel2}
Q_4^h=\sum_{h\neq j\in [h^-,h^+]} \lambda_j Q_4^j \text{ with } \lambda_j\geq 0 \text{ and } \sum_{h\neq j\in [h^-,h^+]} \lambda_j = 1.
\end{equation}
However, by construction, the points $h^-$ and $h^+$ satisfy the equality 
$\left\lceil \frac{t}{h^-} \right\rceil=\left\lfloor \frac{t}{h^+} \right\rfloor+1$.
Hence Lemma \ref{L:face-r3} implies that 
$\{Q_4^{h^-}, \ldots, Q_4^{h^+}\}$ are convex independent and this contradicts \eqref{E:convrel2}.

Consider finally the case $r\geq 5$. The projection $\pi:\bbR^{r-1}\to \bbR^3$ into the first $3$ coordinates sends each $Q_r^j$ into $Q_4^j$. Since the points $\{Q_4^j\}_{1\leq j\leq s}$ are convex independent from what proved above, we infer that also the points $\{Q_r^j\}_{1\leq j\leq s}$ are convex independent.
\end{proof}


Then the proof of Theorem \ref{T:diagcone} can be summarized as follows.
\begin{proof}[Proof of Theorem \ref{T:diagcone}]
The assertion for $r(n)=1$ follows from Example \ref{E:r1} ensuring that if $r(n)=1$, then all the diagonals are proportional, so that $\dim \D_n(C_d)=r(n)=1$.

When instead $r(n)\geq 2$, we can endow the tautological space $R_n(C_d)$ with the basis $\{w_1,\ldots,w_{r(n)}\}$ introduced in Lemma \ref{newbasis}.
Then Corollary \ref{C:secdiag} ensures that the polytope $\Pi(d-n,s(n),r(n))$ is a bounded section of $\D_n(C_d)$, which is thus polyhedral and has dimension $r(n)$ by Proposition \ref{P:vertpoly}.
Finally, Lemma \ref{newbasis} shows that for any $1\leq j\leq s(n)$, the coordinates of the point $\Sigma_{\leq r(n)}(\un \lambda^j)$ correspond to the coordinates of the class of the $j$-th balanced diagonal $\wt \delta_{\un{\lambda^j+1}}$ with respect to the basis $\{w_1,\ldots,w_{r(n)}\}$, and hence the assertion follows from the description of the vertices of $\Pi(d-n,s(n),r(n))$ given by Proposition \ref{P:vertpoly}.
\end{proof}

It would be interesting to determine the combinatorial structure (i.e. the structure of its face lattice) of the polytope $\Pi(t,s,r)$. 
When $r=3$ it is a polygon, whence a cyclic polytope. When $r \ge 4$ there is one case where we can determine it. 

\begin{example}\label{exa:rbigs}




If $4 \le r\leq s\leq t$ 
and  $t$ is a multiple of every element $2\leq j \leq s$,  then it can be shown that $\Pi(t,s,r)$ is combinatorially equivalent to  a cyclic polytope of dimension $r-1$ and with $s$ vertices.

This follows from the ensuing result on convex geometry (the proof of which is omitted): if $3\le r \leq s\leq t$ and if 
 \begin{equation*}
 \un \lambda^j_{\Q}:=(\underbrace{\frac{t}{j}, \cdots, \frac{t}{j}}_{j}, 0, \cdots, 0) \in  \P_{\leq s}(t)\subset (\bbN^s)^{\sum=t}\: \:\text{ for } 1\leq j \leq s,
\end{equation*}
 then   the points $P^j:=(\sigma_2(\un \lambda^j_{\Q}), \ldots, \sigma_{r}(\un \lambda^j_{\Q}))$ are the vertices of a  cyclic polytope in $\R^{r-1}$.
\end{example} 

Note however that from Lemma \ref{L:face-r3} it is easy to construct examples of polytopes $\Pi(t,s,r)$ with $r\geq 4$ that are not simplicial, hence 
not combinatorially equivalent to cyclic polytopes. 

\section{Diagonal cone as a face}\label{S:diag-face}

The aim of this section is to study the extremality properties of the diagonal cone inside the (tautological) (pseudo-)effective cone.
The key result  is the following 

\begin{thm}\label{T:eta} 
Let $C$ be a smooth irreducible curve of genus $g \ge 1$. For $1\leq n \leq d-1$ consider the class 
$$\eta_{n,d} 
:=\color{black}\frac{dg}{n} x^n-\theta x^{n-1}.$$ 
Then
\begin{enumerate}[(i)]
\item \label{T:eta1} if $S$ is a $n$-dimensional diagonal in $C_d$, then $[S]\cdot \eta_{n,d}=0$;
\item \label{T:eta2} 
there exists a constant $c_{n,d}\geq 0$, which is furthermore positive if $g\geq 2$, such that for any  $n$-dimensional irreducible subvariety $S$ of $C_d$ which is  not a diagonal we have that 
$$(\eta_{n,d}-c_{n,d}x^n) \cdot [S] \geq 0.$$
In particular $\eta_{n,d}$ is a nef $(d-n)$-cycle for $g \ge 1$ and it is also big for $g \ge 2$.
\end{enumerate}
\end{thm}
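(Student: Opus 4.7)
Part (i) calls for a direct intersection-theoretic check. In the basis $\{w_s\}_{0\leq s \leq r(n)}$ of $R_n(C_d)$ provided by Lemma~\ref{newbasis}, the normalized diagonal class reads
$\wt\delta_{\un{\lambda+1}} = w_0 + (d-n)w_1 + \sum_{s=2}^{r(n)}\sigma_s(\un\lambda) w_s$.
Since the functions $1, \sigma_2, \ldots, \sigma_{r(n)}$ are linearly independent on partitions $\un\lambda \in \P_{\leq n}(d-n)$, the identity $[\Delta_{\un{\lambda+1}}] \cdot \eta_{n,d} = 0$ reduces to the finite list $(w_0 + (d-n)w_1)\cdot\eta_{n,d} = 0$ and $w_s \cdot \eta_{n,d} = 0$ for $2 \leq s \leq r(n)$, each of which follows by expanding $w_s$ in terms of $x^{n-\alpha}\theta^\alpha$ and using the intersection numbers $\theta^i x^{d-i} = g(g-1)\cdots(g-i+1)$ from Lemma~\ref{L:inters}. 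A more conceptual route is available: Theorem~\ref{T:diagcone} says $\D_n(C_d)$ has dimension exactly $r(n)$ in the $(r(n)+1)$-dimensional space $R^n(C_d)$, so its perpendicular in $R^{d-n}(C_d)$ under the perfect pairing of Proposition~\ref{basetaut} is one-dimensional; it then suffices to verify $\eta_{n,d}$ pairs to zero against the balanced diagonals $\Delta^{\rm bal}_{j,n}$, which span $\D_n(C_d)$.

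For part (ii), my strategy is to exhibit $\eta_{n,d} - c_{n,d}x^n$ as a non-negative combination of classes of $(d-n)$-dimensional diagonals $\Delta_{\un b}$, i.e.\ as an element of the complementary diagonal cone $\D_{d-n}(C_d) \subset R^n(C_d)$, for a suitable $c_{n,d} \geq 0$ that is strictly positive when $g \geq 2$. Granting such a decomposition $\eta_{n,d} - c_{n,d}x^n = \sum \lambda_{\un b}[\Delta_{\un b}]$ with $\lambda_{\un b} \geq 0$, for any irreducible $n$-dimensional $S$ not contained in any of the $\Delta_{\un b}$'s the intersections $[\Delta_{\un b}] \cdot [S]$ are proper (expected dimension $n + (d-n) - d = 0$) and hence $\geq 0$ as intersection numbers of distinct effective cycles; the remaining case of containment is dealt with by a moving argument, since each $(d-n)$-diagonal sits in a positive-dimensional family obtained by varying its defining points in $C$. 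As a consistency check and a base case, for $n=1$ a direct comparison with the big diagonal class $[\Delta]=2(d-1)!((d+g-1)x-\theta)$ gives $\eta_{1,d} - (d-1)(g-1)\,x = [\Delta]/(2(d-1)!)$, yielding $c_{1,d} = (d-1)(g-1) \geq 0$ with strict positivity exactly when $g\geq 2$; the dual case $n = d-1$ yields $c_{d-1,d} = (g-1)/(d-1)$ by an analogous computation with $[\delta]$ from Example~\ref{E:r1}.

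The hard part will be the positivity in Part (ii). Existence of \emph{some} decomposition of $\eta_{n,d} - c_{n,d}x^n$ in the linear span of $\D_{d-n}(C_d)$ is forced by a dimension count (this span is a hyperplane in $R^n(C_d)$ by Theorem~\ref{T:diagcone}, complementary to $\R x^n$), but pinning down non-negativity of all the coefficients $\lambda_{\un b}$ and of $c_{n,d}$ simultaneously is a delicate convex-geometric assertion that requires the detailed structure of balanced partitions developed in Section~\ref{S:diagcone}. A fallback route, should the direct combinatorial approach prove unwieldy, is to exploit the clean push-pull identity $A(\eta_{n,d}) = (d-n)\eta_{n,d+1}$ (a short calculation from Fact~\ref{F:pushpull}) to propagate positivity between consecutive values of $d$, reducing to the stable range $d \geq 2g-1$; there one invokes the projective-bundle presentation $C_d \cong \PP(E_d^*)$ from Fact~\ref{projbund} and deduces the desired inequality from ampleness and $[\Theta]$-stability of the Picard bundle $E_d^*$ via Fulton--Lazarsfeld-type positivity for Schur polynomials in Chern classes of ample bundles, with strict positivity for $g\geq 2$ reflecting the non-vanishing discriminant of a strictly stable bundle.
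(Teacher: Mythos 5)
Your treatment of part (i) is plausible: expanding $\eta_{n,d}$ against the basis $\{w_s\}$ of Lemma \ref{newbasis} and verifying the finitely many vanishings $w_s\cdot\eta_{n,d}=0$ for $s\geq 2$ together with $(w_0+(d-n)w_1)\cdot\eta_{n,d}=0$ is a legitimate direct route, different from the paper's, which instead proves (i) by induction on $d$ using the push--pull operators $A,B$, starting from the big diagonal in $C_{n+1}$. The paper's route has the advantage of setting up machinery (the identity $A(\eta_{n,m})=(m-n)\eta_{n,m+1}$ and the behaviour of $B$ on diagonals) that is reused heavily in part (ii).

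Part (ii), however, has a fatal gap. Your central move is to write $\eta_{n,d}-c_{n,d}x^n$ as a non-negative combination of classes of $(d-n)$-dimensional diagonals $[\Delta_{\un b}]$ and then argue that intersecting with an irreducible non-diagonal $n$-fold $S$ is automatically $\geq 0$. Such a decomposition does exist --- the paper even records the one-term version $\eta_{n,d}=\frac{(d-n)(g-1)}{n}x^n+\frac{1}{(n+1)(d-n-1)!}[\Delta_{\un b}]$ in \eqref{big} --- but it only shows that $\eta_{n,d}-c_{n,d}x^n$ is \emph{effective}, not that it is \emph{nef}, and these are very different. The claim that two distinct irreducible subvarieties of complementary dimension intersect non-negatively is false unless the intersection is proper, and the intersection is not proper precisely when one is contained in the other (or, more generally, when the scheme-theoretic intersection has excess dimension). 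The paper in fact observes, as a corollary of Theorem \ref{T:eta}(i) and \eqref{big}, that $[\Delta_{\un a}]\cdot[\Delta_{\un b}]<0$ for $g\geq 2$: diagonals are not nef. So the positivity you need is exactly the non-trivial content of the theorem, and cannot be obtained for free from the decomposition.

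Your patch for the containment case --- a ``moving argument, since each $(d-n)$-diagonal sits in a positive-dimensional family obtained by varying its defining points in $C$'' --- does not exist. A diagonal $\Delta_{\un b}$ is the image of $\phi_{\un b}\colon C^{d-n}\to C_d$, $(p_1,\ldots,p_{d-n})\mapsto\sum b_i p_i$; the ``defining points'' already sweep out all of $C$, so there is nothing left to vary, and $\Delta_{\un b}$ is a single rigid subvariety, not a member of a covering family. Finally, the fallback via Fulton--Lazarsfeld positivity of Schur polynomials on $\PP(E_d^*)$ is too vague: $\eta_{n,d}$ is not a Schur polynomial in the Chern classes of an ample bundle, the direction of your reduction (decreasing $d$ via $A$/$B$) is not justified, and even if a general positivity theorem gave nefness of $\eta_{n,d}$, it could not yield the sharper statement that $\eta_{n,d}-c_{n,d}x^n$ (with $c_{n,d}>0$ for $g\geq 2$) is non-negative on every non-diagonal irreducible $S$, with equality exactly on the diagonals. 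The paper's actual proof of (ii) is a careful double induction: first Proposition \ref{P:eta-div} handles the case $n=d-1$ by intersecting $S$ with the big diagonal, applying $B$, and invoking induction on $d$; then the general case is reduced to it via $(d-m)$ iterations of $B$, with the key geometric input being Lemma \ref{L:Bcycle}, which controls the diagonal components appearing in $B^{d-m}(S)$ through the length invariant $l(S)$ and a combinatorial analysis of ordered diagonals in $C^d$. Nothing of this sort appears in your proposal.
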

\begin{proof}[Proof of Theorem \ref{T:eta}\eqref{T:eta1}]
We will prove the theorem by induction on $d$. We will distinguish two cases, the first of which gives also the base of the induction $d=2$. 

\un{CASE I:} $n=d-1\geq 1$. 

In this case, there is only one $n$-dimensional diagonal in $C_{n+1}$, namely the big diagonal $\Delta_{(2,1,\ldots,1)}$.   Let us compute the intersection of $[\Delta_{(2,1,\ldots,1)}]$ with $\eta_{n, n+1}$:
\begin{eqnarray}
& [\Delta_{(2,1,\ldots,1)}] \cdot \eta_{n,n+1}=  2(n-1)![(n+g)x-\theta] \cdot \left[\frac{(n+1)g}{n}x^n-x^{n-1}\theta\right] \quad \text{ using Example }\ref{E:r1}\nonumber\\
&= 2(n-1)!\left[\frac{(n+g)(n+1)g}{n}-(n+g)g-\frac{(n+1)g^2}{n}+g(g-1) \right]=0 \quad \text{using Lemma } \ref{L:inters}.\nonumber
\end{eqnarray}

This concludes the proof in this case.

\un{CASE II:} $d\geq n+2\geq 3$. 

We 
use the push operator $A$ of \S \ref{S:pushpull}. Using Fact \ref{F:pushpull}\eqref{F:pushpull2}, we compute (for any $1\leq m \leq e-1$):
\begin{equation}\label{A-geneta}
A(\eta_{m,e})=\frac{eg}{m}(e+1-m) x^m-\left[(e+1-(m-1)-2)x^{m-1}\theta+gx^m \right]= (e-m)\eta_{m,e+1}.
\end{equation}
In particular,  by iterating the above formula, we get 
\begin{equation}\label{A-eta}
A^{d-n-1}(\eta_{n,n+1})=(d-n-1)!\eta_{n,d}.
\end{equation}
Consider now an $n$-dimensional diagonal $\Delta_{\un a}$.  Using \eqref{A-eta} and Fact \ref{F:pushpull}\eqref{F:pushpull1}, we get that 
\begin{equation}\label{eta-diag}
\eta_{n,d}\cdot [\Delta_{\un a}]=\frac{1}{(d-n-1)!}A^{d-n-1}(\eta_{n,n+1}) \cdot [\Delta_{\un a}]= \frac{1}{(d-n-1)!} \eta_{n, n+1}\cdot B^{d-n-1}([\Delta_{\un a}]).
\end{equation}
Now, observe that  Remark \ref{R:eff-pp} implies that $B(\Delta_{\un a})$ is a non-negative linear combination of $n$-dimensional diagonals  $\{\Delta_{\un{a-\epsilon_i}}\}_{a_i\geq 2}$ in $C_{d-1}$,  where we denote by $\un{a-\epsilon_i}$ a non-increasing re-arrangement of the $n$-tuple $(a_1,\ldots, a_{i-1}, a_i-1, a_{i+1}, \ldots, a_n)$. By iterating this observation, we get that $B^{d-n-1}([\Delta_{\un a}])$ is a multiple of the big diagonal in $C_{n+1}$. Hence the vanishing of the right hand side of \eqref{eta-diag} follows from Case I, and we are done.
\end{proof}

Let us now prove Theorem  \ref{T:eta}\eqref{T:eta2} in the case of divisors, i.e. for $n=d-1$. 

\begin{prop}\label{P:eta-div}
Let $d \ge 2, g \ge 1$ and consider an irreducible divisor $S$ of $C_d$. Then
\begin{enumerate}[(i)]
\item\label{P:eta-diva}  $\eta_{d-1,d}\cdot [S] \geq 0$.
\item \label{P:eta-divb} If $S$ is different from the big diagonal $\Delta_{(2,1,\ldots,1)}$ we have 
$$\left(\eta_{d-1,d}-\frac{g}{d-1}\frac{2(g-1)}{d(d-1)+2(g-1)} x^{d-1}\right)\cdot [S] \geq 0.$$
\end{enumerate}
\end{prop}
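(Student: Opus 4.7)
Since $c_{d-1,d}\ge 0$, with equality iff $g=1$, part (i) is implied by (ii) together with Theorem \ref{T:eta}\eqref{T:eta1} (which already settles the case $S=\Delta$, where $\eta_{d-1,d}\cdot[\Delta]=0$). The plan is therefore to prove (ii) by exploiting the projective bundle description $\alpha_d\colon C_d\simeq \bbP(E_d^*)\to \Pic^d(C)$ of Fact \ref{projbund}, which is available whenever $d\ge 2g-1$. In that range, any effective divisor class on $C_d$ decomposes as $[S]=ax+\alpha_d^*D$ with $a\in\bbZ_{\ge 0}$ and $D\in N^1(\Pic^d(C))$. Using the projection formula and the identity $(\alpha_d)_*(x^i)=[\Theta]^{i-d+g}/(i-d+g)!$ for $i\ge d-g$ (which is equivalent to $c(E_d^*)=\exp([\Theta])$ and follows from Lemma \ref{L:inters}), I compute
\begin{equation*}
\eta_{d-1,d}\cdot[S]=\frac{ag}{d-1}+\frac{d+g-1}{(d-1)(g-1)!}\,[\Theta]^{g-1}\cdot D, \qquad x^{d-1}\cdot[S]=a+\frac{[\Theta]^{g-1}\cdot D}{(g-1)!}.
\end{equation*}
Effectiveness of $S$ amounts to the non-vanishing $H^0(\Pic^d(C),\Sym^aE_d^*\otimes \O(D))\ne 0$. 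Combined with the stability of $E_d^*$ (Fact \ref{projbund}\eqref{projbund4}) and a cohomological non-vanishing criterion on the Jacobian, this should yield a sharp lower bound on $\deg_\Theta(D):=[\Theta]^{g-1}\cdot D$ in terms of $a$; substituting into the formulas above then produces exactly the constant $c_{d-1,d}$, with equality achieved at $S=\Delta$ (whose class is $2(d-1)![(d+g-1)x-\theta]$).

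For $d<2g-1$ the projective bundle description is unavailable, and I reduce to the range $d\ge 2g-1$ by means of the push operator $A$ of Definition \ref{D:pushpull}, using the adjunction $A\dashv B$ from Fact \ref{F:pushpull}\eqref{F:pushpull1} together with the identities $B(\eta_{d-1,d})=(d-2)\eta_{d-2,d-1}$ and $B(x^{d-1})=(d-1)x^{d-2}$, both derived from Fact \ref{F:pushpull}\eqref{F:pushpull2}.

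The principal obstacle is to derive the sharp lower bound on $\deg_\Theta(D)$. A naive slope-based estimate from stability of $E_d^*$, namely $\deg_\Theta(D)\ge -a\,g!/(d+1-g)$, is not tight enough to reproduce the specific constant $c_{d-1,d}=\frac{g}{d-1}\cdot \frac{2(g-1)}{d(d-1)+2(g-1)}$; a sharper non-vanishing criterion for sections of $\Sym^aE_d^*\otimes\O(D)$ on the Jacobian --- essentially a Clifford-type input adapted to Picard bundles --- seems to be required. A secondary difficulty is that the push-forward reduction does not preserve the sharp constant on the nose, so the small-$d$ range may require a direct, $d$-by-$d$ treatment.
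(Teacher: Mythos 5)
Your proposal takes a genuinely different route from the paper, but it has a real unresolved gap that you yourself identify, so it does not constitute a proof. The paper argues by induction on $d$ with base case $d=2$: the key move is to intersect $S$ with the big diagonal $\Delta_{(2,1,\ldots,1)}$ to obtain a codimension-two cycle $T$, then apply the pull operator $B$ to get an effective \emph{divisor} $B(T)$ on $C_{d-1}$, and to feed the (weaker) nefness of $\eta_{d-2,d-1}$ from the inductive hypothesis into the identities of Fact~\ref{F:pushpull} together with the relation $g(g-1)x^{d-1}-2(g-1)\theta x^{d-2}+\theta^2 x^{d-3}=0$ in $R^{d-1}(C_d)$. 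After an algebraic manipulation this gives exactly the inequality with the constant $c_d = \frac{2g(g-1)}{(d-1)(d(d-1)+2g-2)}$. No projective-bundle picture, no stability of $E_d^*$, and no separation into the ranges $d\ge 2g-1$ and $d<2g-1$ is needed.

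There are two concrete problems with your route. First, and most importantly, you concede that the naive slope bound coming from stability of $E_d^*$ is too weak and that ``a sharper non-vanishing criterion\dots seems to be required''; this is the entire content of the inequality, so without it the argument is incomplete. I am also skeptical that any single clean ``Clifford-type'' bound on $\deg_\Theta(D)$ reproduces $c_{d-1,d}$: the paper's constant is generated by an inductive recursion (nefness at level $d-1$ plus intersection with $\Delta$ at level $d$) and does not have a closed geometric meaning that a one-shot cohomological non-vanishing criterion would obviously recover. Second, the reduction for $d<2g-1$ via the push--pull operators is not coherent as sketched. $B$ preserves dimension; applied to a divisor class $[S]\in N_{d-1}(C_d)$ it lands in $N_{d-1}(C_{d-1})=N^0(C_{d-1})$, which is trivial. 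To use the adjunction $A(z)\cdot w=z\cdot B(w)$ in the direction you want (from $C_d$ with $d$ small up to a $C_{d'}$ with $d'\ge 2g-1$) you would need to exhibit $[S]$ as $B^{d'-d}(w)$ for an effective $w$ on $C_{d'}$, a lifting statement you do not address. The paper's trick of first cutting $S$ down to codimension two via the diagonal, precisely so that $B$ produces a divisor one level down, is the device you are missing; once you intersect with $\Delta$, the push--pull machinery works smoothly.

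Your intermediate computations are correct: with $[S]=ax+\alpha_d^*D$ and $d\ge 2g-1$, one indeed has
\begin{equation*}
\eta_{d-1,d}\cdot[S]=\frac{ag}{d-1}+\frac{d+g-1}{(d-1)(g-1)!}\,[\Theta]^{g-1}\cdot D,
\qquad
x^{d-1}\cdot[S]=a+\frac{[\Theta]^{g-1}\cdot D}{(g-1)!},
\end{equation*}
and the reduction of (i) to (ii) is the same as the paper's. But the missing nonvanishing criterion and the unresolved small-$d$ reduction mean the proposal, as written, does not establish the Proposition.
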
 
\begin{proof}
Observe  that part \eqref{P:eta-diva} follows from part \eqref{P:eta-divb} using that  $\eta_{d-1,d}\cdot [\Delta_{(2,1,\ldots,1)}] =0$
by Theorem \ref{T:eta}\eqref{T:eta1} and that $x^{d-1}$ is a nef class.

We will prove part \eqref{P:eta-divb} by induction on $d$. Set $\displaystyle c_d:=\frac{g}{d-1}\frac{2(g-1)}{d(d-1)+2(g-1)}$.

For $d=2$, if $S$ is irreducible and different from $\Delta_{(2)}$, then the curves $S$ and $\Delta_{(2)}$ will intersect in a finite number of points and hence $[S]\cdot [\Delta_{(2)}]\geq 0$. The result follows since 
$$\eta_{1,2}-c_2x=(2gx-\theta)-(g-1)x=(g+1)x-\theta=\frac{[\Delta_{(2)}]}{2}.$$

We now assume that \eqref{P:eta-divb} is true for $d-1$ and we prove \eqref{P:eta-divb} for $d\geq 3$. 
Using an idea from the proof \cite[Thm. 3]{Kou}, let $T$ be the  effective cycle associated to the scheme-theoretic intersection $S\cap \Delta_{(2,1,\ldots,1)}$.
Since $S$ and $\Delta_{(2,1,\ldots,1)}$ are distinct (Cartier) divisors on $C_d$, 
$S\cap \Delta_{(2,1,\ldots,1)}$  has pure codimension two and hence 
$[T]=[S]\cdot [\Delta_{(2,1,\ldots,1)}]\in N_{d-2}(C_d)$ (see \cite[Example 2.6.5]{Fult}).
Then $B(T)$ is an effective divisor in $C_{d-1}$ (that may contain the  big diagonal of $C_{d-1}$ as a component) and by induction we get in particular that $\eta_{d-2,d-1}$ is nef, hence
\begin{equation}\label{E:indcd-1}
{(d-1)g \over d-2} \;  [B(T)]\cdot x^{d-2}\geq [B(T)]\cdot  x^{d-3}\theta  \, .  
\end{equation}
Using Fact \ref{F:pushpull}, we compute 
\begin{equation}\label{E:interBT}
\begin{sis}
& [B(T)]\cdot x^{d-2}= [T]\cdot A(x^{d-2}) =2 [T]\cdot x^{d-2} = 2 [S] \cdot  [\Delta_{(2, 1, \ldots, 1)}] \cdot x^{d-2}, \\
& [B(T)]\cdot x^{d-3}\theta = [T] \cdot A(x^{d-3}\theta)= [T]\cdot (gx^{d-2}+x^{d-3}\theta ) = [S]\cdot [\Delta_{(2,1,\ldots, 1)}] \cdot  
(gx^{d-2}+x^{d-3}\theta).
\end{sis}
\end{equation}
Using that $[\Delta_{(2,1,\ldots,1)}]=2(d-2)![(d+g-1)x-\theta]$ by Example \ref{E:r1} and the relation 
$$g(g-1) x^{d-1}-2(g-1) \theta x^{d-2} + \theta^2 x^{d-3} = 0 \ \mbox{in} \ R^{d-1}(C_d)$$ 
which can be obtained either by Proposition \ref{basetaut} and Lemma \ref{L:inters} or by applying the operator $A$ to the relation $g(g-1)x^{d-1}-x^{d-3}\theta^2=0$ in $R^{d-1}(C_{d-1})$ (which holds by Lemma \ref{L:inters}),
we get 
\begin{equation}\label{E:expSD}
\begin{sis}
& \frac{1}{(d-2)!} [S] \cdot  [\Delta_{(2, 1, \ldots, 1)}] \cdot x^{d-2} =2(d+g-1)[S]\cdot x^{d-1}-2[S]\cdot x^{d-2}\theta,  \\
&  \frac{1}{2(d-2)!} [S]\cdot [\Delta_{(2,1,\ldots, 1)}] \cdot  (gx^{d-2}+x^{d-3}\theta)= \\ & = g(d+g-1)[S]\cdot x^{d-1}+(d-1)[S]\cdot x^{d-2}\theta-[S]\cdot x^{d-3} \theta^2= \\
& = g(d+2g-2) [S]\cdot x^{d-1} +(d-2g+1) [S]\cdot x^{d-1}\theta. 
\end{sis}
\end{equation}
Using \eqref{E:interBT} and \eqref{E:expSD} and setting $a=[S]\cdot x^{d-1}$ (so that $a>0$) and $b=[S]\cdot x^{d-2} \theta$, the inequality \eqref{E:indcd-1} becomes 
\[
\begin{aligned}
{(d-1)g\over d-2}\; ( 2(d+g-1)a-2b )  \; \geq  \; &    g (d+2(g-1))a + (d-2g+1)b \Longleftrightarrow   \\
2g(d-1)(d+g-1)a -2g(d-1)b \; \geq \;  &   g(d-2)(d+2(g-1))a+(d-2)(d-2g+1)b   \Longleftrightarrow \\
g [ 2(d-1)(d+g-1)-(d-2) (d+ & 2(g-1)) ]a \; \geq \;    [(d-2)(d-2g+1)+2g(d-1)]b    \Longleftrightarrow \\
g(d^2+2g-2)a \; \geq \;   (d^2+2g-2-d)b & \; \;  \Longleftrightarrow \;\;
{g(d^2+2g-2) \over d^2+2g-2-d} \; \geq \;  {b\over a} \, .
\end{aligned}
\]
We then have 
\[
\begin{aligned}
{dg\over d-1}-{b\over a}\geq & {dg\over d-1} -{g(d^2+2g-2)\over d^2+2g-2-d } = 
g\left({d\over d-1}-{d^2+2g-2\over d^2+2g-2-d }\right) =
\end{aligned}
\]
\[
\begin{aligned}
= & g\left[1+{1\over d-1}-\left(1+{d\over d^2+2g-2-d}\right)\right]=g\left({1\over d-1}-{d\over d(d-1)+2g-2}\right)\\
= &   {2g(g-1) \over (d-1)(d(d-1)+2g-2)}=c_d.  
\end{aligned}
\]

The above inequality is equivalent to 
$$\eta_{d-1,d}\cdot [S]= \frac{dg}{d-1}  x^{d-1}\cdot [S]-x^{d-2}\theta\cdot [S] \geq c_d x^{d-1}\cdot [S],
$$
and this concludes the proof.
\end{proof}


\begin{remark}\label{dleqg}
When $d\leq g$, there is a more geometric (and easier) proof of a weaker form of the above proposition: if $S$ is an irreducible divisor different from the big diagonal $\Delta_{(2,1,\ldots,1)}$ then 
$$\left(\eta_{d-1,d}-{g-d+1\over d-1} x^{d-1}\right)\cdot [S] \geq 0.$$
Indeed, we can write $\eta_{d-1,d}={g-d+1\over d-1}x^{d-1}+((g+1)x-\theta)x^{d-2}$. The second summand is, up to a multiple,  the class of the members of a $(d-2)$-dimensional covering family of the diagonal $\Delta_{(2,1,\ldots,1)}$ by curves; namely, the members of the family are  the curves $\Gamma_{D_{d-2}} = \{ 2p+D_{d-2} \}$ as $D_{d-2}$ varies in  $C_{d-2}$. To calculate the class of these curves, we take the embedding $i=i_{D_{d-2}}: C_2\to C_d$ defined by  $i (p+q):=p+q+D_{d-2}$. We have $i_* [C_2]= x^{d-2}$ and  $i^*x=x$ (for geometric reasons) and $i^*\theta =\theta $ (because $i$ commutes with the Abel-Jacobi maps). 
Therefore we get
\begin{eqnarray*}
[\Gamma_{D_{d-2}}] & =  & i_*([\Delta_{(2)}])=2 i_*((g+1)x-\theta)=2  i_*i^* ((g+1)x-\theta)=2 i_*[C_2] \cdot ((g+1)x-\theta)=  \\
& = & 2 ((g+1)x-\theta)x^{d-2}. 
\end{eqnarray*}
Since  $S \neq \Delta_{(2,1,\ldots,1)}$
we have that $[S]\cdot ((g+1)x-\theta)x^{d-2} \geq 0$, and the result follows.
\end{remark}

Before proving Theorem \ref{T:eta}\eqref{T:eta2} in the general case, let us introduce the following important concept.

\begin{defi}\label{D:lenght}
Given an $n$-dimensional irreducible subvariety $S$ in $C_d$, the \emph{length of $S$}, denoted by $l(S)$, is the length of the support of the general point  $D$ of $S$, i.e. the number of points in the support of the divisor  $D$.
\end{defi}

We note the following straightforward properties of the length function.

\begin{remark}\label{R:lenght}
Given an $n$-dimensional irreducible subvariety $S$ in $C_d$, the length of $S$ is such that  $n\leq l(S)\leq d$.  
Moreover, the length of $S$ can be characterized as the smallest integer $m$ such that $S$ is contained in a diagonal of dimension $m$. Furthermore, there is a unique diagonal $\Delta_{\un a(S)}$ of dimension $l(S)$ containing $S$: the length-$l(S)$ partition $\un a(S)$ of $d$ is specified by the multiplicity of the $l(S)$ points appearing in the general element of $S$. 

Two extremal cases are: $l(S)=\dim S=n$ which happens if and only if $S$ is a diagonal, and $l(S)=\dim C_d=d$ which happens if and only if $S$ is not contained in a proper diagonal (i.e. different from $C_d$).
\end{remark}

The crucial geometric ingredient  in the proof of the general case of Theorem \ref{T:eta}\eqref{T:eta2}   is the following lemma, describing the behavior of the pull map $B$ applied to an irreducible subvariety of $C_d$. We will use the notation introduced in Remark \ref{R:eff-pp}.

\begin{lemma}\label{L:Bcycle}
Let $1 \le n \le d-1$. Let $S$ be an irreducible subvariety of $C_{d}$ of dimension $n$ and length $m:=l(S)$. 
\begin{enumerate}[(a)]
\item \label{L:Bcycle1} For any irreducible component $V$ of  $B(S)$ we have that $l(S)-1\leq l(V)\leq l(S)$.
\item \label{L:Bcycle2} Assume that $n<m:=l(S) < d$ (or equivalently that $S$ is not a diagonal and $S \subset \Delta_{(2,1,\ldots,1)}$) and write $B^{d-m}(S)=W+\sum_{\un b}\nu_{\un b}\Delta_{\un b}$, where $W$ is an effective $n$-dimensional cycle in $C_m$ 
with no component being a diagonal
and the summation runs over all partitions $\un b$ of $m$ in $n$ parts (so that the $\{\Delta_{\un b}\}$ are exactly the $n$-dimensional diagonals of $C_m$).  Then there exists a constant $K_{n,m,d}$ (independent of $S$) such that 
$$K_{n,m,d}\cdot ([W]\cdot x^n)\geq \max\{\nu_{\un b}\}.$$
\end{enumerate}
\end{lemma}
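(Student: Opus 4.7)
\textbf{Part (a)} is immediate from the geometric description in Remark~\ref{R:eff-pp}(ii). A general point of any irreducible component $V$ of $B(S)$ has the form $D = E - p$ for a general $E \in S$ and some $p \in \supp E$. Since $l(S) = m$, a general $E \in S$ has exactly $m$ distinct support points, so we may write $E = \sum_{i=1}^m a_i p_i$ with $a_i \ge 1$. Removing $p = p_i$ yields the divisor $(a_i - 1)p_i + \sum_{j \neq i} a_j p_j$, whose number of distinct support points is $m$ if $a_i \ge 2$ and $m - 1$ if $a_i = 1$. Hence $l(V) \in \{m-1, m\}$.

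\textbf{Part (b).} The plan is to decompose $B^{d-m}(S)$ explicitly via the iterated diagram
$$\xymatrix{ & C^{d-m} \times C_m \ar[dl]_{\pi_2} \ar[dr]^{\mu} & \\ C_m & & C_d }$$
with $\mu(q_1, \dots, q_{d-m}, D) = q_1 + \cdots + q_{d-m} + D$, so that $B^{d-m}([S]) = (\pi_2)_* \mu^*[S]$. Writing $E = \sum_{i=1}^m a_i p_i$ for a general element of $S$ (using that $S \subset \Delta_{\un a(S)}$, with $\un a(S) = (a_1, \dots, a_m)$), the components of $\mu^{-1}(S)$ over the generic point of $S$ are naturally indexed by tuples $\un c = (c_1, \dots, c_m)$ with $0 \le c_i \le a_i$ and $\sum c_i = d - m$: such a component maps under $\pi_2$ to the locus of divisors $\sum (a_i - c_i) p_i$ in $C_m$, a component of length $|\{i : c_i < a_i\}|$. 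The extremal pattern $\un c = (a_1 - 1, \dots, a_m - 1)$ (which is admissible since $\sum (a_i - 1) = d - m$) always produces a length-$m$ contribution to $W$ — so $W$ is non-zero whenever any $\nu_{\un b} > 0$ — whereas a length-$n$ diagonal $\Delta_{\un b}$ (with $\un b = (b_1, \dots, b_n)$ a partition of $m$ into $n$ parts) receives contributions only from the patterns with exactly $n$ indices satisfying $c_i < a_i$ and whose multiset $\{a_i - c_i : c_i < a_i\}$ equals $\un b$.

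The bound will then follow by a pairing argument. For each $\un c$ contributing to $\Delta_{\un b}$, modify $\un c$ to $\un c'$ by choosing an index $j$ with $c_j = a_j$ (fully subtracted) and replacing $c_j$ by $a_j - 1$ while simultaneously increasing some $c_i < a_i$ by $1$. The modified pattern $\un c'$ has exactly $n + 1$ surviving indices and produces a length-$(n+1)$ component $V_{\un c'}$ of $W$; the natural rational ``collapse'' map $V_{\un c'} \dashrightarrow \Delta_{\un b}$ (obtained by coalescing the reactivated point with a neighbour) has generic degree bounded purely in terms of $\un a$ and $\un b$. Combining this with the projection formula Fact~\ref{F:pushpull}(i) and the ampleness of $x$ on $C_m$ yields an inequality of the form
$$\nu_{\un b} \; \le \; K(\un a, \un b, \un c') \cdot [V_{\un c'}] \cdot x^n \; \le \; K(\un a, \un b, \un c') \cdot [W] \cdot x^n ,$$
uniformly in $S$. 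Since $\un a$ ranges over the finite set of length-$m$ partitions of $d$ and $\un b$ over the finite set of length-$n$ partitions of $m$, the maximum of the constants $K(\un a, \un b, \un c')$ furnishes the desired $K_{n,m,d}$.

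The main obstacle will be to carry out the pairing step uniformly: one needs to guarantee that the modified pattern $\un c'$ genuinely yields a length-$(n+1)$ component (rather than one of smaller dimension, which could occur if $S$ is in special position inside $\Delta_{\un a(S)}$), and to compute the generic degree of $V_{\un c'} \dashrightarrow \Delta_{\un b}$ without explicit knowledge of $S$. This will likely force a passage to the finite cover $\phi_{\un a(S)} : C^m \to \Delta_{\un a(S)}$, lifting $S$ to an equivariant cycle $\widetilde{S} \subset C^m$, and re-expressing $B^{d-m}([S])$ via intersection data intrinsic to $\widetilde{S}$ so that all coverings degrees become visible as generic degrees of projections $C^m \to C_m$ restricted to $\widetilde{S}$.
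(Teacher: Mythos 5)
Part (a) is correct and matches the paper's argument verbatim.

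For part (b), your setup is right in spirit --- lift $S$ to the ordinary product and decompose the preimage into pieces indexed by ``subtraction patterns'' --- but the crucial bounding step is missing, and your sketched ``pairing argument'' would not close the gap as stated. Two concrete problems. First, the ``collapse map'' $V_{\un c'} \dashrightarrow \Delta_{\un b}$ does not exist naturally: $V_{\un c'}$ is an $n$-dimensional irreducible subvariety of $C_m$ of length at least $n+1$, and $\Delta_{\un b}$ is an $n$-dimensional diagonal of $C_m$; there is no natural dominant rational map from one to the other. Even in the ambient geometry, coalescing two points of a divisor gives a map from an $(n+1)$-dimensional diagonal $\Delta_{\un{b'}}$ to its boundary, but this is not a map $V_{\un c'} \to \Delta_{\un b}$, and any surjection you might cook up would have degree depending on $S$, not only on $(\un a, \un b, \un c')$. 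Second, even granting such a map, it is unclear why its degree would bound the coefficient $\nu_{\un b}$ (a multiplicity in a pushforward cycle) rather than just a fiber count of $V_{\un c'}$. You identified these obstacles yourself in the final paragraph, which is honest, but they are not incidental --- they are exactly where the content of the proof lies.

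The paper's proof resolves precisely this by working in $C^d$ rather than in $C^{d-m}\times C_m$, so that all lifts $S_\lambda$ of $S$ (one for each tableau $\lambda$ of shape $\un a(S)$) sit inside ordered diagonals $\Delta(\lambda) \cong C^m$ and are permuted transitively and isomorphically by the $\bS_d$-action. It fixes one distinguished tableau $\lambda_0$ (with first column $\{1,\dots,m\}$) for which $\pi|_{\Delta(\lambda_0)}$ is an isomorphism, so $T := (\mu_m)_*\pi_*(S_{\lambda_0})$ is an honest component of $W$. For any $\lambda$ contributing to a diagonal $\Delta_{\un b}$, the coefficient $\nu'_\lambda$ is the degree of a generic fiber of $\pi|_{S_\lambda}$; transferring by $\sigma_*$ (with $\lambda = \lambda_0^\sigma$) turns this into the intersection number $[\pi_*(S_{\lambda_0})] \cdot x_I$ for an explicit square-free monomial $x_I$ of degree $n$ in the pullback divisors $x_i$ on $C^m$. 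The final inequality $[\pi_*(S_{\lambda_0})] \cdot x_I \le \frac{1}{m!}[T]\cdot x^n$ comes from the decomposition $\mu_m^*(x^n) = m!\, x_I + (\text{nef classes})$. Your ``passage to the finite cover $\phi_{\un a(S)}$'' suggestion points in the right direction (it is essentially the identification $\Delta(\lambda) \cong C^m$), but the mechanism --- $\bS_d$-equivariance plus the comparison $x_I \le \frac{1}{m!}\mu_m^*(x^n)$ --- is the missing idea, and the pairing/collapse route does not supply it.
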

Consider the diagram 
$$\xymatrix{
 C^m \ar[d]_{\mu_m} & C^{d} \ar[d]^{\mu_d}\ar[l]_{\hskip 0.3cm \pi}\\
C_m & C_{d}
}
$$
where $\mu_d : C^d \to C_d$ is the addition  map sending $(p_1,\ldots,p_d)$ into $\sum_{i=1}^d p_i$ (and similarly for $\mu_m$), and $\pi:C^d\to C^m$ is the projection onto the first $m$ factors.
We get a map 
\begin{equation}\label{map-Bwt}
\wt B=(\mu_m)_*\circ (\pi_*)\circ (\mu_d)^*: N_n(C_d) \longrightarrow N_n(C_m)
\end{equation}
which is easily seen to be equal to $m!B^{d-m}$. Hence we can prove the lemma with $B^{d-m}$ replaced by $\wt B$. 
Before doing that, we need the following digression into  the behavior of the diagonals under the addition map $\mu_d$ (and hence $\mu_m$) and the projection map $\pi$.  

\begin{nota}[\textbf{Diagonals in the ordinary product}]\label{digr-diag}

A \emph{partition} ${\un a}=(a_1,\ldots,a_m)$  of $d$ of length $m$ (which means that $a_1\geq \ldots \geq a_m\geq 1$ and $\sum_{i=1}^m a_i=d$) can be represented by the \emph{Young diagram} $Y(\un a)$ with $m$ rows and whose $i$-th row has exactly $a_i$ boxes. The association $\un a\mapsto Y(\un a)$ establishes a bijective correspondence between partitions of $d$ and Young diagrams with $d$ boxes. We will denote by $r(\un a)=r(Y(\un a))$ the number of parts of  $\un a$, which is equal to the number of rows of $Y(\un a)$. Given two partitions $\un a$ and $\un b$ of $d$, we say that $\un a\geq \un b$ (or that $Y(\un a)\geq Y(\un b)$) if $Y(\un b)$ can be obtained by $Y(\un a)$ via an iteration of the following operation: join two rows of $Y(\un a)$  to form a unique row and then rearrange the rows in  non-increasing cardinality. We say that $\un a >\un b$ if $\un a\geq \un b$ but $\un a\neq \un b$.  It is easy to see that $\un a\geq \un b$ if and only if 
$\Delta_{\un a}\supseteq \Delta_{\un b}$, where $\Delta_{\un a}\subset C_d$ is the $r(\un a)$-dimensional diagonal associated to $\un a$ (see \S \eqref{E:diag}).

A \emph{tableaux} $\lambda$ on $d$ numbers consists of a Young diagram $Y(\un a)$ with $d$ boxes (called the shape of $\lambda$) and an equivalence class of fillings of $Y(\un a)$ with the numbers  $\{1, \ldots, d\}$ without repetitions,
where two fillings of $Y(\un a)$  are equivalent  if one can be obtained from the other  by iterating the following two operations: reordering the numbers on a given row and exchanging two rows of the same cardinality\footnote{Note that a tableaux in our sense is not a Young tableaux (in the usual sense) because we do not require any non-decreasing property of the fillings along the  columns.}.
We will  denote by $\un{\lambda}$ the partition corresponding to the shape of $\lambda$, by $r(\lambda)$ the number of rows of the shape of $\lambda$ (so that $r(\lambda)=r(\un{\lambda})$) and we will denote by $T(\un a)$ the set of all tableaux $\lambda$ such that $\un{\lambda}=\un a$. 
The symmetric group $\bS_d$ acts on the set of all tableaux on $d$ numbers by permuting the filling: given a tableaux $\lambda$ and an element $\sigma \in \bS_d$ we will denote by $\lambda^{\sigma}$ the resulting tableaux.
Note that the action of the symmetric group $\bS_d$  preserves the shape of a tableaux, so that it induces an action on $T(\un a)$ which is easily seen to be transitive. Given two tableaux $\lambda$ and $\mu$ on $d$ numbers, we say that $\lambda\geq \mu$ if $\mu$ can be obtained from $\lambda$ via an iteration of the following operation: join two rows of $\lambda$ and their fillings to form a unique row with a specified filling and then rearrange the rows in  non-increasing cardinality. As above, we say that $\lambda>\mu$ if $\lambda\geq \mu$ but $\lambda\neq\mu$. The poset of tableaux on $d$ numbers with respect to the above partial order $\geq $ is graded with respect to the function $\lambda\mapsto r(\lambda)$ (in particular, if $\lambda\geq \mu$ then $r(\lambda)\geq r(\mu)$ with strict inequality unless $\lambda=\mu$)
and it admits meets: given two tableaux $\lambda$ and $\mu$, the meet of $\lambda$ and $\mu$ (i.e. the greatest lower bound) is the tableaux $\lambda\wedge \mu$ which is obtained by merging all the rows of $\lambda$  and  of $\mu$ (and their fillings) that have some number in common.  
Note that we have the following compatibility properties among the poset structures on the set of partitions of $d$ and on the set of tableaux on $d$ numbers: 
\begin{itemize}
\item  if $\lambda \geq \mu$ then $\un \lambda\geq \un \mu$;
\item if $\un a\geq  \un b$ and $\lambda$ is such that $\un \lambda=\un a$ then there exists a (not necessarily unique) $\mu$ such that $\un \mu=\un b$ and $\lambda\geq \mu$;
\item if $\un a\geq  \un b$ and $\mu$ is such that $\un \mu=\un b$ then there exists a (not necessarily unique)  $\lambda$ such that $\un \lambda=\un a$ and $\lambda\geq \mu$. 
\end{itemize}

To any tableaux $\lambda$ on $d$ numbers, we can associate an \emph{ordered diagonal} $\Delta(\lambda)$ in the ordinary product $C^d$ of dimension $r(\lambda)$ consisting of all the elements of $C^d$  such that the coordinates corresponding to the numbers lying on the same row of $\lambda$ are equal. Note that $\Delta(\lambda)\cong C^{r(\lambda)}$. The action of the symmetric group $\bS_d$ induces a permutation of the ordered diagonals which is compatible with the action of $\bS_d$ on the tableaux on $d$ numbers: in symbols, for any $\sigma \in \bS_d$ we have that $\sigma_*:\Delta(\lambda)\stackrel{\cong}{\longrightarrow} \Delta(\lambda^{\sigma})$, where $\sigma_*:C^d \stackrel{\cong}{\longrightarrow} C^d$ is the permutation isomorphism 
induced by $\sigma$.  The poset structure on the set of tableaux  introduced above coincides with the poset structure on the ordered diagonals coming from the inclusion: $\lambda\geq \mu$ if and only if  $\Delta(\lambda) \supseteq \Delta(\mu)$ and $\Delta(\lambda)\cap \Delta(\mu)=\Delta(\lambda\wedge \mu)$. 

With these notations, we can describe the relationship between diagonals in the symmetric product $C_d$ and ordered diagonals in the 
ordinary \color{black}product $C^d$ via the addition map $\mu_d:C^d\to C_d$. Indeed, for any  tableaux $\lambda$ on $d$ numbers and any partition $\un a$ of $d$, we have 
\begin{equation}\label{pullb-diag}
\begin{sis}
& \mu_d(\Delta(\lambda))=\Delta_{\un \lambda}, \\
& \mu_d^{-1}(\Delta_{\un a})=\bigcup_{\lambda\in T(\un a)} \Delta(\lambda).
\end{sis}
\end{equation}
Consider now the projection map $\pi:C^d\to C^m$ onto the first $m$ factors. 
Given a tableaux $\lambda$ on $d$ numbers, we will denote by $\pi(\lambda)$ the tableaux on $m$ numbers obtained by first erasing the boxes labeled by $\{m+1,\ldots, d\}$ and then rearranging the rows in non increasing cardinality. If we denote by $\bS_m\times \bS_{d-m}\subset \bS_d$ the subgroup consisting of the permutations that preserve the partition $\{1,\ldots, d\}=\{1,\ldots, m\} \coprod \{m+1,\ldots, d\}$, then  the surjective map $\lambda\mapsto \pi(\lambda)$ is $\bS_m$-equivariant and $\bS_{d-m}$-invariant. The above map admits a section: given a tableaux $\nu$ on $m$ letters, let $\pi^{-1}(\nu)$ be the tableaux on $d$ letters obtained from $\nu$ by inserting $d-m$ rows of cardinality one filled with the numbers $\{m+1,\ldots, d\}$. Clearly the maps $\pi$ and $\pi^{-1}$ preserve the partial order $\geq $, namely if 
$\lambda\geq \mu$ then $\pi(\lambda)\geq \pi(\mu)$ and $\pi^{-1}(\lambda)\geq \pi^{-1}(\mu)$.
The  ordered diagonals are preserved under  the projection map $\pi$; more precisely, we have  the following:
\begin{equation}\label{proj-diag}
\begin{sis}
& \pi(\Delta(\lambda)))=\Delta(\pi(\lambda)),\\
& \pi^{-1}(\Delta(\nu))=\Delta(\pi^{-1}(\nu)).
\end{sis}
\end{equation} 
Note the map $\pi_{| \Delta(\lambda)}: \Delta(\lambda)\twoheadrightarrow \Delta(\pi(\lambda))$ can be identified, 
after the identifications $\Delta(\lambda)\cong C^{r(\lambda)}$ and $\Delta(\pi(\lambda))\cong C^{r(\pi(\lambda))}$, with some projection map $C^{r(\lambda)}\to C^{r(\pi(\lambda))}$. 
\end{nota}

\begin{proof}[Proof of Lemma \ref{L:Bcycle}]
Part \eqref{L:Bcycle1} is immediate from the geometric description of $B(S)$  given in Remark \ref{R:eff-pp}\eqref{R:eff-ppB}.

Part \eqref{L:Bcycle2}. As explained above, it is enough to prove the result for  the map $\wt B$ of \eqref{map-Bwt}. 

Let us first describe the $n$-dimensional cycle $\wt B([S])$. According to Remark \ref{R:lenght}, there exists a unique partition $\un a(S)$ of $d$ of length $m=l(S)$ such  $S\subset \Delta_{\un a(S)}$.  This implies that the $n$-dimensional cycle $\wt B([S])$ will be supported  on the subvariety $\mu_m(\pi(\mu_d^{-1}(\Delta_{\un a(S)})))$. Let us describe more carefully this inclusion, using the notations and results of \S \ref{digr-diag}. 

First of all, using the inclusion $S\subset \Delta_{\un a(S)}$ and \eqref{pullb-diag}, we can write 
\begin{equation}\label{E:pullb-S}
\mu_d^{*}[S]=[\mu_d^{-1}(S)]=\sum_{\lambda\in T({\un a(S)})}  [S_{\lambda}],
\end{equation}
where $S_{\lambda}$ is the union of all the irreducible components of $\mu_d^{*}[S]$ (necessarily of dimension $n$) contained in   $\Delta(\lambda)$. Note that the above description \eqref{E:pullb-S} is well-defined because there cannot be irreducible components $V$ of $\mu_d^{-1}(S)$ contained in two ordered diagonals: indeed if there exist $\lambda\neq \mu\in T(\un a(S))$ such that $V\subseteq \Delta(\lambda)\cap \Delta(\mu)=\Delta(\lambda\wedge \mu)\subsetneq \Delta(\lambda), \Delta(\mu)$ then $S=\mu_d(V)\subseteq\mu_d(\Delta(\lambda\wedge \mu))=\Delta_{\un{\lambda\wedge \mu}}$ and $\Delta_{\un{\lambda\wedge \mu}}\subsetneq\Delta_{\un a(S)}$, contradicting the minimality of $\Delta_{\un a(S)}$. 
 
Observe moreover that since the map $\mu_d$ is $\bS_d$-invariant,  the permutation isomorphism $\sigma_*:C^d\stackrel{\cong}{\longrightarrow} C^d$ induced by any element $\sigma\in \bS_d$ will send $S_{\lambda}$ isomorphically onto $S_{\sigma(\lambda)}$. This defines an action of $\bS_d$ on the set $\{S_{\lambda}\}$ which is transitive since the action of $\bS_d$ on $T(\un a(S))$ is transitive.  

Next, using \eqref{pullb-diag} and \eqref{proj-diag}, we get that the push-forward   $(\mu_m)_*\pi_*([S_{\lambda}])$ is the class of an $n$-dimensional cycle with support contained in the diagonal $\mu_m((\pi(\Delta(\lambda))))=\Delta_{\un{\pi(\lambda)}}\subset C_m$, for any $S_{\lambda}$ appearing in \eqref{E:pullb-S}. We claim that the following is true:
\begin{equation}\label{E:lenght-pS} l(V)=r(\un{\pi(\lambda)}) \quad \text{ for any irreducible component }V \text{ of }(\mu_m)_*\pi_*([S_{\lambda}]).
\end{equation}

Indeed, since $V$ is contained in $\Delta_{\un{\pi(\lambda)}}$, we have that $l(V)\leq  r(\un{\pi(\lambda)})$. By contradiction, 
assume that $l(V)< r(\un{\pi(\lambda)})$.  Then Remark \ref{R:lenght} implies that there exists a partition $\un c< \un{\pi(\lambda)}$ of $m$ such that $V\subseteq \Delta_{\un c} \subsetneq \Delta_{\un{\pi(\lambda)}}$. 
By definition of $(\mu_m)_*\pi_*([S_{\lambda}])$, there exists an irreducible component $V'$  of the cycle $\pi_*([S_{\lambda}])$
such that $\mu_m(V')=V$. Now, on one hand, since $S_{\lambda}\subset \Delta(\lambda)$, we must have that $V'\subseteq \pi(\Delta(\lambda))=\Delta(\pi(\lambda))$. On the other hand, since $\mu_m(V')=V\subseteq \Delta_{\un c}$, we must have by \eqref{pullb-diag} that $V'\subseteq \Delta(\xi)$ for some $\xi\in T(\un c)$. 
By combining these two properties, we get that $V'\subseteq \Delta(\pi(\lambda))\cap \Delta(\xi)=\Delta(\nu:=\pi(\lambda)\wedge \xi)\subsetneq \Delta(\pi(\lambda))$.  
By definition of $\pi_*([S_{\lambda}])$, there exists an irreducible component $V''$ of $S_{\lambda}$ such that $\pi(V'')=V'$. 
Since $S_{\lambda}\subset \Delta(\lambda)$ and $V'\subset \Delta(\nu)$, we have that $V''\subseteq \Delta(\lambda)\cap \pi^{-1}(\Delta(\nu))=\Delta(\mu:=\lambda\wedge \pi^{-1}(\mu))$. Moreover, since $\Delta(\nu)\subsetneq \Delta(\pi(\lambda))$, we have that $\Delta(\mu)\subsetneq \Delta(\lambda)$. Now taking the image via $\mu_d$, we get that $S=\mu_d(V'')\subseteq \Delta_{\un \mu}\subsetneq \Delta_{\un \lambda}=\Delta_{\un a(S)}$ and this contradicts the minimality of $\un a(S)$. This proves \eqref{E:lenght-pS}. 

Fix now a  tableaux $\lambda_0$, with shape $\underline{a}(S)$, which contains in the first column (which has size $m$)  all the numbers $1,\ldots, m$. The tableaux $\lambda _0$ has the property that $r(\pi(\lambda_0))=r(\lambda_0)=m$ and the map $\pi_{|_{\Delta(\lambda_0)}} : \Delta (\lambda_0) \to \Delta (\pi(\lambda_0))$ is an isomorphism  (both spaces are isomorphic to $C^m$).   Therefore $T:=(\mu_m)_*(\pi_*(S_{\lambda_0}))$ is supported in some irreducible components of $W$ of length equal to $m$ by \eqref{E:lenght-pS}.  Moreover,  \eqref{E:lenght-pS} implies that the only 
cycles $S_{\lambda}$ \color{black}from the decomposition \eqref{E:pullb-S} which contribute to the sum $\Sigma_{\underline{b}} \nu_{\underline{b}}\Delta_{\underline{b}}$ are those $S_{\lambda}$ such that $r(\pi(\lambda))=n$. For any such $\lambda$, we must have that $(\mu_m)_*(\pi_*(S_{\lambda}))=\nu_{\lambda}\Delta_{\un{\pi(\lambda)}}$ for some $\nu_{\lambda}\in \bbN$. Since there are a finite number (which can be bounded only in terms of $d$) of these $\lambda$ and $x^n\cdot [W] \geq x^n\cdot [T]$ since $x$ is ample and $W=T+F$ with $F$ effective, the statement will follow from the following 

\vspace{0.1cm}

\un{Claim} : We have that $[T]\cdot x^n\geq \nu_{\lambda}$ for any $\lambda\in T(\un a(S))$ such that $r(\pi(\lambda))=n$.

\vspace{0.1cm}

To prove the Claim, observe that $\pi_*(S_{\lambda})=\nu'_{\lambda}\Delta(\pi(\lambda))$ for some $\nu'_{\lambda}\in \bbN$ and that we have 
\begin{equation}\label{E:nu1}
\nu_{\lambda}=\nu'_{\lambda}\cdot \deg (\mu_m)_{|\Delta(\pi(\lambda))}\leq \nu'_{\lambda} \cdot \deg \mu_m=\nu'_{\lambda}\cdot m!. 
\end{equation} 
In terms of the projection map   $\pi_{|_{\Delta (\lambda)}} : \Delta (\lambda )\cong C^m \to \Delta (\pi(\lambda )) 
\cong C^n$, the coefficient $\nu_{\lambda}'$ can be expressed as the following intersection number inside $\Delta(\lambda)\cong C^m$
\begin{equation}\label{E:nu2}
\nu_{\lambda}'=[S_{\lambda}] \cdot \pi_{|_{\Delta(\lambda)}}^{*}(p),
\end{equation}
for any point $p\in \Delta(\pi(\lambda))$.  Consider now a permutation $\sigma\in \bS_d$ such that $\lambda=\lambda_0^{\sigma}$ and the induced isomorphism 
$$\psi:= \sigma_* \circ (\pi_{|_{\Delta(\lambda_0 )}})^{-1}: \Delta(\pi(\lambda_0))\stackrel{(\pi_{|_{\Delta(\lambda_0)}})^{-1}}{\longrightarrow} \Delta(\lambda_0) \stackrel{\sigma_* }{\longrightarrow} \Delta(\lambda).$$ 
Note that, since $\sigma(S_{\lambda_0})=S_{\lambda}$, we have that $\psi^*([S_{\lambda}])=\pi_*([S_{\lambda_0}])$. 
Moreover, if we call $x_i$ the divisor on $\Delta(\pi(\lambda_0))\cong C^m$ which is equal to the pull-back of $x$ on $C$ via the $i$-th projection, then $((\pi_{|_{\Delta(\lambda_0)}})^{-1}) ^{*} (\sigma_*)^*(\pi_{|_{\Delta(\lambda)}})^{*}(p)=\prod_{i\in I}x_i=:x_I$, where $I$ is the set (of cardinality $n$) of rows of $\Delta(\pi(\lambda_0))$, or equivalently of $\Delta(\lambda_0)$, that do not disappear under the operations $\lambda_0\mapsto \lambda_0^{\sigma}=\lambda\mapsto \pi(\lambda)$. 
Therefore, if we apply $\psi^*$ to \eqref{E:nu2}, we can express the coefficient $\nu_{\lambda}'$ as an intersection number inside $\Delta(\pi(\lambda_0))$: 
\begin{equation}\label{E:nu3}
\nu_{\lambda}'=[\pi_*(S_{\lambda_0})] \cdot x_I.
\end{equation}
Finally, since $\mu_m^*(x^n)=m!x_I+ \left(\text{nef classes}\right)$, 
we have that 
\begin{equation}\label{E:nu4}
[\pi_*(S_{\lambda_0})] \cdot x_I\leq \frac{1}{m!} [\pi_*(S_{\lambda_0})] \cdot \mu_m^*(x^n)=\frac{[T]\cdot x^n}{m!}.
\end{equation}
Putting together \eqref{E:nu1}, \eqref{E:nu3} and \eqref{E:nu4}, the Claim follows. 
\end{proof}

\begin{proof}[Proof of Theorem \ref{T:eta}\eqref{T:eta2}]
Let $S$ be a $n$-dimensional irreducible subvariety of $C_d$ (with $1\leq n\leq d-1$) which is  not a diagonal. Denote by $m:=l(S)$ the length of $S$. Since $S$ is not a diagonal by assumption, we have that $n+1\leq m \leq d$ by Remark \ref{R:lenght}. It will be sufficient to show the existence of constants $c_{n,m,d}\geq 0$,
which are positive if $g\geq 2$, such that 
\begin{equation}\label{E:const-m}
(\eta_{n,d}-c_{n,m,d}x^n) \cdot [S]\geq 0 \quad \text{for any }S\subset C_d \quad \text{irreducible with }\:\dim S=n<m= l(S)\leq d.
\end{equation}
Indeed, once found $c_{n,m,d}$, we can set $c_{n,d}=\min\{c_{n,m,d}, n+1\leq m\leq d\}$ and we are done.

We will prove \eqref{E:const-m} by induction on $d$. We will distinguish three cases, the first of which gives also the base of the induction, i.e. $d=2$. 

\un{CASE I:} $1\leq n=d-1$ ($\Rightarrow m=d$).

This follows from Proposition \ref{P:eta-div}.

\vspace{0.1cm}
 
\un{CASE II:} $3\leq n+2\leq m=d$. 

We will use the $(d-n-1)$-th iteration of the push-pull operators of \S \ref{S:pushpull} in order to reduce to Case I.
Using Fact \ref{F:pushpull}\eqref{F:pushpull2}, we compute 
\begin{equation}\label{A-x}
A^{d-n-1}(x^{n})=(d-n)! x^n,
\end{equation}
where the $x^n$ on the left lies in $R^n(C_{n+1})$ and the $x^n$ on right lives in $R^n(C_d)$. Therefore, setting $c_{n,d,d}:=(d-n)c_{n,n+1,n+1}$, equations \eqref{A-eta} and \eqref{A-x} give that 
\begin{align*}\label{A-eta-pert}
A^{d-n-1}(\eta_{n, n+1}-c_{n, n+1, n+1} x^n) & =(d-n-1)!(\eta_{n,d}-(d-n)c_{n, n+1,n+1}x^n) \\
& =(d-n-1)! (\eta_{n,d}-c_{n,d,d}x^n).
\end{align*}
Using this and Fact \ref{F:pushpull}\eqref{F:pushpull1}, we get 
\begin{equation}\label{E:red-div}
\begin{aligned}
 \phantom{a}[S] \cdot (\eta_{n,d}-c_{n,d,d}x^n) & =\frac{1}{(d-n-1)!} [S] \cdot A^{d-n-1}(\eta_{n,n+1}-c_{n,n+1,n+1} x^n) = \\
 & =\frac{1}{(d-n-1)!} [B^{d-n-1}(S)] \cdot (\eta_{n,n+1}-c_{n,n+1,n+1} x^n).
\end{aligned}
\end{equation}
Applying Lemma \ref{L:Bcycle}\eqref{L:Bcycle1} iteratively, we get that each irreducible component of the $n$-dimensional cycle $B^{d-n-1}(S)\subset C_{n+1}$ has length $\ge n+1$, i.e. it is not a diagonal. Hence Case I implies that the right hand side of \eqref{E:red-div} is non-negative and we are done. 
 
\vspace{0.1cm}

\un{Case III:} $3\leq n+2\leq m+1\leq d$.

We will use the $(d-m)$-th iteration of the  push-pull operators of \S \ref{S:pushpull} in order to reduce to $C_{m}$ and then apply the induction hypothesis (note that $m<d$).   By iterating $(d-m)$-times formula \eqref{A-geneta}  and iterating $(d-m)$-times the  formula for $A(x^n)$ given in Fact \ref{F:pushpull}\eqref{F:pushpull2}, we get for any $b\in \R$ and using that $n<m$:
\begin{equation}\label{A-d-m}
\begin{aligned}
A^{d-m}(\eta_{n,m}-bx^n) & = (m+1-n)\ldots (d-1-n)[(m-n)\eta_{n,d}-b(d-n)x^n] \\
& = \frac{(d-1-n)!}{(m-1-n)!} \left[\eta_{n,d}-\frac{d-n}{m-n} bx^n\right].
\end{aligned}
\end{equation}
Now, write $B^{d-m}(S)=W+\sum_{\un b} \nu_{\un b} \Delta_{\un b}$ as in  Lemma \ref{L:Bcycle}\eqref{L:Bcycle2} and compute for any $a\geq 0$:
\begin{eqnarray}\label{E:red-m1}
\\
& &\frac{(d-1-n)!}{(m-1-n)!} [S]\cdot  (\eta_{n,d}-ax^{n})=[S]\cdot A^{d-m}\left(\eta_{n,m}-\frac{m-n}{d-n}a x^n\right) \ \text{by } \eqref{A-d-m}  \nonumber \\
& = &  [B^{d-m}(S)]\cdot \left(\eta_{n,m}-\frac{m-n}{d-n}a x^n\right) \nonumber \\
& = & ([W]+\sum_{\un b}\nu_{\un b}[\Delta_{\un b}]) \left(\eta_{n,m}- \frac{m-n}{d-n}a x^n\right) \ \text{using Fact }\ref{F:pushpull}\eqref{F:pushpull1} \nonumber \\
& =& [W]\cdot \eta_{n,m} -\frac{m-n}{d-n} a \left([W]\cdot x^n +\sum_{\un b}\nu_{\un b} [\Delta_{\un b}]\cdot x^n\right) \ 
\text{as }\color{black}\eta_{n,m}\cdot [\Delta_{\un b}]=0 \text{ by Theorem } \ref{T:eta}\eqref{T:eta1}\nonumber \\
& \geq & [W]\cdot \eta_{n,m} -\frac{m-n}{d-n} a \left([W]\cdot x^n +\max\{\nu_{\un b}\} \sum_{\un b} [\Delta_{\un b}]\cdot x^n\right) \nonumber \\
& \geq & [W]\cdot \eta_{n,m} -\frac{m-n}{d-n} a \left(1+K_{n,m,d} \sum_{\un b} [\Delta_{\un b}]\cdot x^n\right) 
(\color{black}[W]\cdot x^n)
 \ \text{by Lemma \ref{L:Bcycle}\nonumber}.
\end{eqnarray}

Now, by the induction hypothesis, we can assume that we have already found constants $\{c_{n,k,m}\geq 0\}_{n<k\leq m}$, which are positive if $g\geq 2$, in such a way that \eqref{E:const-m}  holds true. Hence, if we take $a=c_{n,m,d}\geq 0$, and positive if $g\geq 2$,   so that  
$$\frac{m-n}{d-n}c_{n,m,d}\left(1+K_{n,m,d} \sum_{\un b} [\Delta_{\un b}]\cdot x^n\right)=\min\{c_{n,k,m}, n<k\leq m\},$$
then the validity of \eqref{E:const-m} for any $n<k\leq m$ implies that the last term in \eqref{E:red-m1} is non-negative and the first statement is proved.

Let us now prove the last statement 
of Theorem \ref{T:eta}\eqref{T:eta2}\color{black}. The fact that $\eta_{n,d}$ is nef  follows immediately from \eqref{T:eta1} and the existence of $c_{n,d}\geq 0$. In order to prove that $\eta_{n,d}$ is big for $g\geq 2$, consider the partition  $\un b=(b_1,\ldots,b_{d-n})$  with $b_1=n+1, b_2=\ldots =b_{d-n}=1$ and let $\Delta_{\un b}$ be the corresponding diagonal. Now Proposition \ref{diagclass} gives 
$$[\Delta_{\un b}] = (n+1)(d-n-1)!\big((d-n+gn)x^n - n x^{n-1}\theta\big)$$
and therefore 
\begin{equation}
\label{big}
\eta_{n,d} = \frac{(d-n)(g-1)}{n} x^n + \frac{1}{(n+1)(d-n-1)!} [\Delta_{\un b}].
\end{equation}
As $x$ is ample, it follows by \cite[Cor. 2.12]{fl1} that $x^n$ is a big $(d-n)$-cycle, therefore so is $\eta_{n,d}$, being the sum of a big and an effective $(d-n)$-cycle. 
\end{proof}

\begin{remark}
It follows by Theorem \ref{T:eta}(i) and \eqref{big} that, for $g \ge 2$, diagonal cycles are not nef. In fact let $\Delta_{\un a}$ be any $n$-dimensional diagonal cycle. Then 
$$0 = [\Delta_{\un a}] \cdot \eta_{n,d} = \frac{(d-n)(g-1)}{n} [\Delta_{\un a}] \cdot x^n + \frac{1}{(n+1)(d-n-1)!} [\Delta_{\un a}] \cdot [\Delta_{\un b}]$$
whence $[\Delta_{\un a}] \cdot [\Delta_{\un b}] < 0$.
\end{remark}

Theorem \ref{T:eta} gives 
some very nice convex-geometric consequences for the $n$-dimensional diagonal cone $\D_n(C_d)$ inside the cones of $n$-dimensional (tautological or not) effective (and pseudoeffective) cycles on $C_d$ 
(for the definition of perfect face and edge see \ref{E:perf-face} in Appendix \ref{B}).

\begin{cor}\label{diag-poly}
For any $1\leq n\leq d-1$ and $g\geq 1$ we have:
\begin{itemize}
\item[(a)] $\Pseff_n(C_d)$ is locally finitely generated at every $\alpha \in \{\eta_{n,d}-c_{n,d}x^n < 0\}$, in particular at every non-zero $\alpha$ in the $n$-dimensional diagonal cone;
\item[(b)] the $n$-dimensional diagonal cone is $\{\eta_{n,d}=0\} \cap \Pseff_n(C_d)$;
\item[(c)] every extremal ray of $\Pseff_n(C_d)$ that is contained in $\{\eta_{n,d} - c_{n,d}x^n < 0\}$ 
is generated by a diagonal cycle;
\item[(d)] every face of the $n$-dimensional diagonal cone is a perfect face of $\Pseff_n(C_d)$ (and hence also of $\Eff_n(C_d)$);
\item[(e)] if $R_n(C_d) = N_n(C_d)$ (in particular for a very general curve) then $\cone(\eta_{n,d})$ is an edge of $\Nef^n(C_d)$.
\end{itemize} 

The same conclusions (a)-(d) hold for $\Psefft_n(C_d)$ and $\Efft_n(C_d)$. Moreover $\cone(\eta_{n,d})$ is an edge of $\Neft^n(C_d)$.
\end{cor}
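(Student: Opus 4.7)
The plan is to deduce Corollary \ref{diag-poly} by combining Theorem \ref{T:eta} with the abstract convex-geometric Proposition \ref{edges1} of Appendix \ref{B}. Throughout I assume $g\geq 2$ so that the constant $c_{n,d}$ of Theorem \ref{T:eta}(ii) is strictly positive; the case $g=1$ would be handled separately from the explicit genus-one description of Example \ref{genus1}, which makes $\Pseff_n(C_d)$ and $\D_n(C_d)$ both two-dimensional and reduces every assertion to an immediate verification.

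First I would collect what Theorem \ref{T:eta} already supplies. The class $\eta_{n,d}$ is nef, so $\eta_{n,d}\geq 0$ on $\Pseff_n(C_d)$, and it vanishes on every diagonal class by (i), so $\D_n(C_d)\subseteq\{\eta_{n,d}=0\}\cap\Pseff_n(C_d)$. By linearity, part (ii) extends from irreducible subvarieties to any effective $n$-cycle no component of which is a diagonal, yielding $(\eta_{n,d}-c_{n,d}x^n)\cdot\alpha\geq 0$ for such $\alpha$. The next step is to pass to extremal rays of the closed cone: any such ray is a limit of classes of effective combinations, and since the set of diagonal classes is finite, any extremal ray that is not itself generated by a diagonal must be approximated by non-diagonal effective combinations, whence it also satisfies $(\eta_{n,d}-c_{n,d}x^n)\cdot\alpha\geq 0$. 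This proves (c), and then (b) follows because the face $\{\eta_{n,d}=0\}\cap\Pseff_n(C_d)$ is spanned by its extremal rays, which by (c) -- combined with the fact that $x^n$ is strictly positive on $\Pseff_n(C_d)\setminus\{0\}$ since $x$ is ample -- must all be diagonal rays.

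With (b) and (c) in hand, I would invoke Proposition \ref{edges1}, applied with $K=\Pseff_n(C_d)$, $F=\D_n(C_d)$, and the linear functionals $\eta_{n,d}$ and $\eta_{n,d}-c_{n,d}x^n$. The proposition is tailor-made to produce from this input the local finite generation statement (a) -- in any open cone of classes on which $\eta_{n,d}-c_{n,d}x^n$ is negative, only the finitely many diagonal extremal rays contribute -- together with the perfection statement (d), ensuring that the supporting hyperplane $\{\eta_{n,d}=0\}$ and every one of its sub-faces remains a perfect face of $\Pseff_n(C_d)$. For (e), under the assumption $R_n(C_d)=N_n(C_d)$, part (b) identifies $\eta_{n,d}^{\perp}\cap\Pseff_n(C_d)=\D_n(C_d)$, and the dimension count $\dim\D_n(C_d)=r(n)=\dim N_n(C_d)-1$ from Theorem \ref{T:diagcone} and Proposition \ref{basetaut}(i) shows that this is a codimension-one face; dualizing, $\cone(\eta_{n,d})$ is a one-dimensional face, i.e.\ an edge, of $\Nef^n(C_d)$. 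The tautological version of each assertion is obtained by repeating the argument verbatim inside $R_n(C_d)$, where $\D_n(C_d)$ is automatically of codimension one in $\Psefft_n(C_d)$ and consequently $\cone(\eta_{n,d})$ is unconditionally an edge of $\Neft^n(C_d)$.

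The main obstacle is the passage in the second paragraph from the pointwise inequality of Theorem \ref{T:eta}(ii) on irreducible subvarieties to an analogous inequality on all extremal rays of the closed cone $\Pseff_n(C_d)$. The point that makes this work is the finiteness of the set of diagonal classes: if a sequence of non-diagonal effective combinations limits to an extremal ray $\rho$, the strict gap furnished by $c_{n,d}>0$ survives in the limit, while rays coming from diagonals are accounted for separately. Once this is in place, all the remaining convex-geometric bookkeeping -- faces, perfection, local finite generation -- is packaged into Proposition \ref{edges1}, whose hypotheses Theorem \ref{T:eta} is precisely designed to verify.
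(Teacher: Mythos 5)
Your overall strategy matches the paper's: handle $g=1$ separately (using the two-dimensionality of $N_n(C_d)$), and for $g\geq 2$ feed Theorem \ref{T:eta} into the convex-geometric machinery of Proposition \ref{edges1}/Corollary \ref{edges2}. The intermediate derivation of (c) and then (b) in your second paragraph is harmless but redundant, since those are already among the conclusions that Proposition \ref{edges1} and Corollary \ref{edges2} deliver once the hypotheses are checked; you could invoke them directly, as the paper does.

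There is, however, a genuine gap in your treatment of the tautological case. You claim the tautological version ``is obtained by repeating the argument verbatim inside $R_n(C_d)$,'' but Corollary \ref{edges2} takes as input the set $I$ of classes of \emph{irreducible} $n$-dimensional subvarieties, and these do not in general lie in $R_n(C_d)$. The cone $\Psefft_n(C_d)=\overline{\Eff_n(C_d)\cap R_n(C_d)}$ is generated by effective cycles whose \emph{total class} is tautological, even when the individual irreducible components are not, so one cannot simply intersect $I$ with $R_n(C_d)$ and proceed. The paper's remedy is to apply Proposition \ref{edges1} with $V=R_n(C_d)$ and $Y=Y_1\cup\{\text{$n$-dimensional diagonals}\}$, where $Y_1$ consists of nonnegative combinations of non-diagonal irreducibles whose \emph{sum} lies in $R_n(C_d)$; hypothesis~(iv) of Proposition \ref{edges1} still holds by linearity, and one must additionally verify $K(Y)=\Psefft_n(C_d)$ via the same splitting/compactness argument used in the first part of the proof of Proposition \ref{edges1}. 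Without this modification the tautological assertions do not follow. A minor further slip: in genus one, $\D_n(C_d)$ is one-dimensional (it has dimension $r(n)=1$), not two-dimensional as you write; and leaning on Example \ref{genus1} here risks circularity, since that example cites Theorem \ref{T:B} for the extremality of diagonal classes -- the paper instead argues directly from Theorem \ref{T:eta} and $\dim N_n(C_d)=2$.
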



\begin{proof}[Proof of Corollary \ref{diag-poly}]
Let us first prove the case $g=1$. In this case $N_n(C_d)=R_n(C_d)$ has dimension two because $C_d$ is a projective bundle over $\Pic^d(C)\cong C$ by Fact \ref{projbund}; therefore, $\Pseff_n(C_d)= \Psefft_n(C_d)$  has two extremal rays. By Example \ref{E:r1} and Theorem \ref{T:eta}\eqref{T:eta1}, all the diagonal cycles lie on the ray $(\eta_{n,d})^\perp \cap \Pseff_n(C_d)$. Moreover, since $\eta_{n,d}$ is nef by  Theorem \ref{T:eta}\eqref{T:eta2}, we get that 
the ray $(\eta_{n,d})^\perp \cap \Pseff_n(C_d)$ is an extremal ray of $\Pseff_n(C_d)= \Psefft_n(C_d)$ and $\cone(\eta_{n,d})$ is an extremal ray of $\Nef^n(C_d)= \Neft^{
n\color{black}}(C_d)$. The case $g=1$ is proved. 

 Let us now prove the case $g\geq 2$.  The first part is a straightforward consequence of Corollary \ref{edges2} and Theorems \ref{T:eta} and \ref{T:diagcone}. 
For the case $\Psefft_n(C_d)$,  set
\begin{displaymath}
Y_1=\left\{\left. \sum\limits_{j=1}^s a_j [W_j] \right| \begin{array}{l} a_j \ge 0, \,W_j \text{ is an irreducible } n\text{-dimensional subvariety}\\ \text{of } C_d \text{ that is not a diagonal, and } \sum_j a_j [W_j] \in R_n(C_d) \end{array}\right\}
\end{displaymath}
and apply Proposition \ref{edges1} with $V = R_n(C_d), Y = Y_1 \cup  \{n-\mbox{dimensional diagonals}\}$ and the same functionals.
Now $K(Y) = \Psefft_n(C_d)$ follows as in the first part of the proof of Proposition \ref{edges1}, that is that any $\alpha \in \Psefft_n(C_d)$ can be written as $u + w$ with $u$ in the $n$-dimensional diagonal cone and $w \in K(Y_1)$. 
\end{proof}

%

Let us point out also that from Theorem \ref{T:eta} it follows that the property of being a positive linear combination of diagonal cycles is a numerical property.  

\begin{cor}\label{diag-rig}
For any $1\leq n\leq d-1$ and $g\geq 1$, consider two effective $n$-cycles such that $[S]=[T]\in N_n(C_d)$. If $S$ is a positive linear combinations of diagonal cycles then also $T$ is a positive linear combination of diagonal cycles.

In particular, the diagonal cycles that span an extremal ray of $\D_n(C_d)$ (see Theorem \ref{T:diagcone})
are numerically rigid, i.e. if $S$ is any such diagonal cycle and $T$ is an effective $n$-cycle such that $[T]=[S]$ then $T=S$. 
\end{cor}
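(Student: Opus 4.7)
The plan is to decompose $T$ into its irreducible components $T = \sum_i a_i [W_i]$ with $a_i > 0$ and distinct irreducible $n$-dimensional subvarieties $W_i$, and to show that every $W_i$ must be a diagonal. The first assertion of the corollary follows immediately.

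For $g \geq 2$ the key input is Theorem \ref{T:eta}. Since $S$ is a positive combination of diagonal cycles, Theorem \ref{T:eta}\eqref{T:eta1} gives $\eta_{n,d} \cdot [S] = 0$, and since $[T]=[S]$ one has
$$
0 \;=\; \eta_{n,d} \cdot [T] \;=\; \sum_{i:\, W_i \text{ not a diagonal}} a_i\,\bigl(\eta_{n,d} \cdot [W_i]\bigr),
$$
where the diagonal components contribute zero by the same result. For any non-diagonal $W_i$, Theorem \ref{T:eta}\eqref{T:eta2} yields $\eta_{n,d} \cdot [W_i] \geq c_{n,d}\,(x^n \cdot [W_i])$, with $c_{n,d} > 0$; ampleness of $x$ forces $x^n \cdot [W_i] > 0$. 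Every summand on the right is therefore strictly positive, so the vanishing of the sum forces the non-diagonal index set to be empty.

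The case $g = 1$ must be treated separately because $c_{n,d}$ vanishes. Here $N_n(C_d) = R_n(C_d)$ is two-dimensional via the projective bundle structure $C_d = \PP(E_d^*) \to \Pic^d(C)$ of Fact \ref{projbund}, and by Example \ref{genus1} the diagonal ray is one of the two extremal rays of $\Pseff_n(C_d)$. Extremality forces each $[W_i]$ onto that ray, and a direct geometric analysis using the projective bundle together with the length function of Definition \ref{D:lenght} identifies the irreducible $n$-subvarieties with class on the diagonal ray as diagonals.

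For the numerical rigidity, if $S$ spans an extremal ray of $\D_n(C_d)$ then the first part writes $T = \sum_j b_j \Delta_j$ with $b_j > 0$ and diagonal cycles $\Delta_j$, and extremality of the ray through $[S]$ forces each $[\Delta_j]$ to be proportional to $[S]$. A short combinatorial check using Lemma \ref{newbasis} (which identifies diagonal classes with points of the polytope $\Pi(d-n,s(n),r(n))$) together with the classification in Theorem \ref{T:diagcone} shows that the vertex corresponding to $[S]$ has a unique preimage among all partitions, so $\Delta_j = S$ for every $j$; equating classes then yields $\sum_j b_j = 1$, i.e.\ $T = S$. The main obstacle in the overall argument is the separate handling of $g = 1$, where the $\eta_{n,d}$-based perturbation degenerates and one must instead exploit the projective bundle structure and the extremality of the diagonal ray.
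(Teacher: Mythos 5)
For $g\geq 2$ your proof is essentially the paper's: decompose $T$ into irreducible components, note that $\eta_{n,d}\cdot[T]=\eta_{n,d}\cdot[S]=0$ by Theorem \ref{T:eta}\eqref{T:eta1}, and then use the strict inequality $\eta_{n,d}\cdot[W]\geq c_{n,d}\,x^n\cdot[W]>0$ from Theorem \ref{T:eta}\eqref{T:eta2} (with $c_{n,d}>0$ and $x$ ample) to rule out any non-diagonal component. The paper compresses this into the remark that Theorem \ref{T:eta} identifies the positive combinations of diagonals with the effective cycles whose class lies on $\{\eta_{n,d}=0\}$, a numerical condition.

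You are right to observe that this positivity argument degenerates when $g=1$, where Theorem \ref{T:eta}\eqref{T:eta2} only guarantees $c_{n,d}\geq 0$; the paper's two-sentence proof does not address this case. But your repair for $g=1$ is not yet an argument: extremality of the diagonal ray gives only that each component's numerical class lies on that ray, and the step you describe as ``a direct geometric analysis \dots identifies the irreducible $n$-subvarieties with class on the diagonal ray as diagonals'' is exactly the assertion that needs proof, which you leave unproved. A similar remark applies to the ``in particular'' part: the claimed ``short combinatorial check'' that a vertex of $\Pi(d-n,s(n),r(n))$ has a unique partition preimage is immediate from Newton's identities when $s(n)=r(n)$, but when $s(n)>r(n)$ it requires a genuine argument (uniqueness of extremes of the $\sigma_k$ via strict Schur-concavity, etc.) that you do not supply. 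So your proof has the same shape as the paper's, plus one observation the paper lacks (the $g=1$ subtlety), but that observation is flagged rather than resolved.
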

\begin{proof}
Theorem \ref{T:eta} implies that the positive linear combinations of diagonals cycles are the unique effective cycles whose class lies on the hyperplane $\{\eta_{n,d}=0\}$. Since this latter property is a numerical property, the result follows.
\end{proof}


\appendix
\section{Convex geometry of cones} 
\label{B}

We collect here all the definitions and results about the convex geometry of cones that we need throughout the paper.  A good reference is \cite{Roc}.

Let $V$ be a real finite dimensional vector space with the Euclidean topology. A (convex) \textbf{cone} $K$ inside $V$ is a non-empty subset $K$ of $V$ such that if $x,y\in K$  and $\alpha, \beta\in \R^{>0}$ then $\alpha x+\beta y\in K$. We say that a cone $K$ \emph{contains} $0$ if $0\in K$, 
or equivalently if $\alpha x+\beta y\in K$ for any $x,y\in K$ and $\alpha,\beta\in \R_{\geq 0}$. Given a cone $K$, the set $K\cup \{0\}$ is a cone that contains $0$. 

Given a cone $K\subseteq V$, the  smallest linear subspace containing $K$ is called the \emph{linear hull} of $K$ and it is denoted by $\langle K \rangle$. The dimension of $\langle K \rangle$ is called the dimension of the cone $K$ and it is denoted by $\dim K$. A cone is said to be \emph{full} if $\langle K \rangle =V$, or equivalently if $\dim K=\dim V$. 
There is a largest subspace contained in $K\cup \{0\}$, namely $(K \cup \{0\}) \cap (- (K\cup \{0\}))$, which is called the \emph{lineality space} and it is denoted by $\lin(K)$. The dimension of $\lin(K)$ is called the \emph{lineality} of the cone $K$ 
and it is denoted by $\ldim(K)$. A cone is said to be \emph{salient} (or sometimes called \emph{pointed})
if $\lin(K)=\{0\}$, or equivalently if it does not contain lines through the origin. 

To any cone $K$, we can associate a closed cone, namely the \emph{closure}  $\ov K$ of $K$, and a relatively open cone, namely the \emph{relative interior} $\ri(K)$ of $K$ inside $\langle K\rangle$, with the properties that 
$$\ri(K) \subseteq K \subseteq \ov{K}.$$
The above inclusions induce equalities on the linear hulls but only (possibility strict) inclusions on the lineality spaces. 
Hence, we have that
$$
\begin{sis}
& K \: \text{is full }\Longleftrightarrow \ri(K) \:  \text{is full }\Longleftrightarrow \ov K \: \text{is full,}\\
& \ov K \: \text{is salient } \Longrightarrow K \: \text{is salient} \Longrightarrow \ri(K) \: \text{is salient.}
\end{sis}
$$
The \emph{relative boundary} of $K$  is $\partial K:=K\setminus \ri(K)$.  

Given a subset $S$ of $V$, we will denote by $\aff(S)$ the \emph{affine hull} of $S$, that is the smallest affine subspace containing $S$, by $\conv(S)$ the \emph{convex hull} of $S$, that is the smallest convex subset containing $S$ and by
$\cone(S)$ the \emph{conical hull} of $S$, that is the smallest cone containing $S\cup \{0\}$, which is equal to the intersection of all the cones that contain $S\cup \{0\}$. Note that $\cone(S)$ is a full cone containing $0$ inside the linear hull $\langle S \rangle=\langle \cone(S) \rangle$. Also, if $S$ is a finite set, then $\conv(S)$ is a full dimensional polytope inside $\aff(S)=\aff(\conv(S))$. 

\vspace{0.1cm}

Given a cone $K\subseteq V$, the \textbf{dual cone} (or polar cone)  of $K$ is the cone in the dual vector space $V^{\vee}$ defined by 
$$K^{\vee}:=\{l \in V^{\vee}\: : \: l(x)\geq 0 \quad \text{for all }x\in K\}\subseteq V^{\vee}.$$
Let us summarize the main properties of the duality operator of cones in the following


\begin{fact}\label{F:dual-cone} Let $K \subseteq V$ be a cone.
\noindent
\begin{enumerate}[(i)]
\item \label{F:dual-cone1}  The dual cone $K^{\vee}$ is always closed, and $K^{\vee}=(\ov K)^{\vee}$. Moreover, taking the double dual is equivalent to taking the closure, i.e. $(K^{\vee})^{\vee}=\ov{K}$. In particular, upon fixing an identification $V\cong V^{\vee}$, the duality operator $K\mapsto K^{\vee}$ is an involution on the set of closed convex cones in $V$.
\item \label{F:dual-cone2}  $\langle K \rangle = (\lin (K^{\vee}))^{\perp}$ and $\langle K^{\vee} \rangle = (\lin (\ov{K}))^{\perp}$. In particular, 
$$\codim K=\ldim (K^{\vee}) \: \text{and }\ldim (\ov{K})=\codim K^{\vee}.$$
\end{enumerate}
It follows that a closed cone $K$ is salient (resp. full) if and only if 
$K^{\vee}$ is full (resp. salient).
\end{fact}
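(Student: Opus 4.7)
All the claims are standard consequences of the Hahn--Banach separation theorem together with elementary manipulations, so the plan is to reduce everything to a single separation statement and then extract the rest by taking orthogonal complements.

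For part \eqref{F:dual-cone1}, I would first observe that $K^{\vee}$ is closed because it is the intersection of the closed half-spaces $H_x:=\{l\in V^{\vee}:l(x)\geq 0\}$ as $x$ ranges over $K$. The equality $K^{\vee}=(\ov K)^{\vee}$ is then immediate: the inclusion $(\ov K)^{\vee}\subseteq K^{\vee}$ comes from $K\subseteq \ov K$, and conversely if $l\in K^{\vee}$ and $x=\lim x_n$ with $x_n\in K$, then $l(x)=\lim l(x_n)\geq 0$ by continuity of $l$. For the bidual, the inclusion $\ov K\subseteq (K^{\vee})^{\vee}$ is tautological (every $x\in K$ satisfies $l(x)\geq 0$ for all $l\in K^{\vee}$, and $(K^{\vee})^{\vee}$ is closed). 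The reverse inclusion $(K^{\vee})^{\vee}\subseteq \ov K$ is the one nontrivial point of the whole Fact and is the main obstacle: given $y\notin \ov K$, I would apply the Hahn--Banach separation theorem (in the geometric form for a point and a closed convex set containing the origin of the ambient affine space) to produce a hyperplane $\{l=c\}$ with $c<0$ strictly separating $y$ from $\ov K$; scaling, and using that $\ov K\cup\{0\}$ is itself a convex cone, one checks that in fact $c$ can be taken to be any negative number and $l\geq 0$ on $K$, so that $l\in K^{\vee}$ witnesses $y\notin (K^{\vee})^{\vee}$. The fact that $K\mapsto K^{\vee}$ is an involution on closed convex cones (after identifying $V\cong V^{\vee}$) is then just the combination of closedness of $K^{\vee}$ and $(K^{\vee})^{\vee}=\ov K$.

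For part \eqref{F:dual-cone2}, the key observation is that the lineality space of $K^{\vee}$ is
$$\lin(K^{\vee})=K^{\vee}\cap(-K^{\vee})=\{l\in V^{\vee}:l(x)=0 \text{ for all } x\in K\}=\langle K\rangle^{\perp},$$
where the last equality uses that a linear functional vanishing on $K$ vanishes on its linear span. Taking orthogonal complements then gives $\langle K\rangle=\lin(K^{\vee})^{\perp}$. The symmetric statement $\langle K^{\vee}\rangle=\lin(\ov K)^{\perp}$ follows by applying the same computation to the closed cone $K^{\vee}$ and using the involution $(K^{\vee})^{\vee}=\ov K$ established in \eqref{F:dual-cone1}. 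The dimension identities then drop out: taking dimensions of orthogonal complements in $V$ (resp.\ $V^{\vee}$) yields $\codim K=\dim V-\dim\langle K\rangle=\ldim(K^{\vee})$ and $\ldim(\ov K)=\dim V-\dim\langle K^{\vee}\rangle=\codim K^{\vee}$.

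Finally, the last assertion is a formal consequence of \eqref{F:dual-cone2}: a closed cone $K$ is salient iff $\ldim(K)=\ldim(\ov K)=0$ iff $\codim K^{\vee}=0$ iff $K^{\vee}$ is full, and symmetrically for the other implication. No further computation is needed once the dimension identities are in place.
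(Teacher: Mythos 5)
Your proposal is correct and self-contained. The paper itself does not prove Fact \ref{F:dual-cone} but merely cites Rockafellar (\cite[Thm.~14.1, p.~121, Thm.~14.6]{Roc}); what you have written is precisely the standard content behind those references. Part (i) is the bipolar theorem, for which your reduction to a single Hahn--Banach separation of a point from the closed convex cone $\ov K\cup\{0\}$ (and the observation that the separating functional can be taken nonnegative on $K$, using conicity to rule out a finite positive lower bound) is exactly the right argument. Part (ii) reduces everything to the clean identity $\lin(K^{\vee})=\langle K\rangle^{\perp}$, from which the second equality and the dimension formulas follow formally by perp and by applying part (i) once more; this is also the route in Rockafellar. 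The final equivalence between salient/full and full/salient is then immediate from the dimension identities, as you say. In short: nothing is missing, and the approach is the same as the source the paper relies on, just written out.
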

\begin{proof}
For 
\eqref{F:dual-cone1}, see \cite[Thm. 14.1 and page 121]{Roc}. For 
\eqref{F:dual-cone2}, see \cite[Thm. 14.6]{Roc}.
\end{proof}

In the following, we will be interested in cones $K$ such that $\ov K$ is salient. For any such cone $K$, if we take $l\in \ri(K^{\vee})$ (which is non-empty since $\ov K$ is salient) then the intersection $\{l=1\} \cap K$ is a bounded convex set (called a \emph{bounded section}  of $K$) and, if $K \neq \{0\}$, $K\cup \{0\}$ is the cone over $\{l=1\}\cap K $, i.e.
$$K\cup \{0\}=\R_{\ge 0}\cdot (\{l=1\}\cap K):=\{\lambda\cdot x\: : \lambda\in \R_{\geq 0} \text{ and }x\in \{l=1\}\cap K\}.
$$
Note that, moreover, $K\cup \{0\}$ is closed if and only if some (or equivalently any) bounded section is compact, i.e. it is a convex body in $\{l=1\}$.  Conversely, if $S\subset V$
is a bounded convex set, then the abstract cone over $S$, i.e.
$$C(S):=\{\lambda\cdot (1, x)\in \R\oplus V\: : \lambda \geq 0, x\in S\}\subset \R\oplus V,$$
is a cone (containing $0$) with salient closure, and moreover $C(S)$ is closed if and only if $S$ is a convex body. 
Via these two constructions, the study of cones (resp. closed cones) with salient closure can be reduced to the study 
of bounded convex sets (resp. convex bodies).  
Moreover, using the above constructions, a more abstract characterization of cones with salient closure can be obtained: a cone $K$ has salient closure $\ov K$ if and only 
if \color{black}$K$ does not contain lines (not necessarily through the origin).


\vspace{0.1cm}


Given a cone $K$, 
a  \textbf{face}  of $K$ is a subcone 
$F\subseteq K$ such that whenever $x,y\in K$ are such that $x+y\in F$ then $x,y\in F$.  
Note that $K$ is always a face, while if $0 \in K$, then $\{0\}$ is a face of $K$ if and only if $K$ is salient. A non-trivial face of $K$ is a face that is different from  $\{0\}$ and $K$, which are the unique  faces of dimension, respectively, $0$ and $\dim K$. Faces of dimension one are either lines through the origin (which can occur only if $K$ is not salient) or  \emph{extremal rays} of $K$, i.e. rays  of the form $\{0\}\neq \cone(v)\subseteq K$ with the property  that if $v=w_1+w_2$ for some $w_1, w_2\in K$ then $w_1, w_2\in \cone (v)$. A face of codimension one is called a \emph{facet}.
The faces of a cone $K$ form a poset $\F(K)$ with respect to the inclusion relation. We denote by $\F_{r}(K)$ the 
set of \color{black}faces of $K$ of dimension $r$. The (arbitrary) intersection of faces of a cone $K$ is still a face (so that the poset $\F(K)$ admits a meet given by the intersection)  and faces are transitive, i.e. if $F$ is a face of $K$ and $G$ is a face of $F$ then $G$ is a face of $K$. 

\begin{fact}\label{F:faces}
Let $K$ be a cone 
\begin{enumerate}[(i)]
\item \label{F:faces1}
There is a partition of $K$ into locally closed subsets 
$$K=\coprod_{F \in \F(K)} \ri(F),$$
and the cones $\{\ri(F)\}_{F \in \F(K)}$ are exactly the maximally relatively open subcones of $K$.
\item \label{F:faces2}
If $K=\cone(S)$ for a certain set $S$, then every face $F$ of $K$ is equal to 
$$F=\cone(S\cap F).$$
\end{enumerate}
\end{fact}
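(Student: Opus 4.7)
The plan is to establish (i) in two stages and then deduce (ii) as a short consequence of the face axiom.

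For (i), I would begin by observing that an arbitrary intersection of faces of $K$ is again a face (this is noted in the paragraph preceding the statement), so for each $x \in K$ there exists a unique smallest face $F_x$ of $K$ containing $x$. The central technical step is to show that $x \in \ri(F_x)$ for every $x \in K$. Arguing by contradiction, if $x$ lay on the relative boundary of $F_x$ in $\langle F_x \rangle$, a separation theorem inside $\langle F_x \rangle$ would produce a supporting affine hyperplane $H \subset \langle F_x \rangle$ passing through $x$ with $F_x$ on one side. The intersection $F_x \cap H$ would then be a proper face of $F_x$ containing $x$; by transitivity of faces (also recorded before the statement of the Fact) this would be a strictly smaller face of $K$ containing $x$, contradicting the minimality of $F_x$. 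This separation step is the main obstacle in the whole argument.

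Given $x \in \ri(F_x)$ for all $x \in K$, disjointness of $\{\ri(F)\}_{F \in \F(K)}$ reduces to the implication $x \in \ri(F) \Rightarrow F = F_x$: the inclusion $F_x \subseteq F$ is automatic, and a proper inclusion would place $x \in F_x$ inside a proper face of $F$, which by a standard fact on convex sets is disjoint from $\ri(F)$. For the maximality assertion, let $U$ be a relatively open subcone of $K$ and pick two arbitrary points $u, v \in U$. If $v \notin F_u$, relative openness of $U$ at $u$ in $\langle U \rangle$ would yield $(1+\varepsilon)u - \varepsilon v \in U \subseteq K$ for sufficiently small $\varepsilon > 0$, and applying the face axiom to the decomposition
\[
(1+\varepsilon) u \;=\; \bigl((1+\varepsilon) u - \varepsilon v\bigr) + \varepsilon v \;\in\; F_u
\]
would force $\varepsilon v$, and hence $v$, into $F_u$, a contradiction. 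So $v \in F_u$, and minimality of $F_v$ gives $F_v \subseteq F_u$; by symmetry $F_u = F_v$ for all $u, v \in U$. Consequently $U$ lies in the relative interior of this common smallest face, which identifies $\{\ri(F)\}_{F \in \F(K)}$ as the maximal relatively open subcones of $K$.

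For (ii), the inclusion $\cone(S \cap F) \subseteq F$ is immediate since $S \cap F \subseteq F$ and $F$ is a cone. For the reverse, any nonzero $f \in F \subseteq K = \cone(S)$ can be written as $f = \sum_{i=1}^{n} \lambda_i s_i$ with $\lambda_i > 0$ and $s_i \in S \subseteq K$. Peeling off one summand at a time via the decomposition $f = \lambda_1 s_1 + \sum_{i \geq 2} \lambda_i s_i$, with both pieces lying in $K$, the face axiom forces each $\lambda_i s_i$ into $F$; since $F$ is closed under positive rescaling, each $s_i$ lies in $S \cap F$, so $f \in \cone(S \cap F)$. Everything beyond the separation step in part (i) is direct manipulation of the face axiom.
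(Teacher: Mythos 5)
The paper's proof of this Fact consists entirely of citations to Rockafellar ([Roc, Thm.~18.2] for part (i), [Roc, Thm.~18.3] for part (ii)); no argument is supplied. Your proposal gives a self-contained proof, and it is in substance the same route Rockafellar takes: the smallest face $F_x$ containing $x$ exists because faces are closed under arbitrary intersection, the heart of the matter is the separation argument showing $x \in \ri(F_x)$, and the peeling-off-summands argument for (ii) via the face axiom is the standard one. Two small points are worth tightening. First, in the separation step you invoke a supporting \emph{affine} hyperplane $H$ through $x$; for $F_x \cap H$ to be a subcone (hence a face) you actually need $H$ to be a linear hyperplane. This is automatic for cones: if $x \neq 0$ lies on the relative boundary of the cone $F_x$, then any supporting hyperplane $\{l = c\}$ at $x$ must also support $\lambda x$ for all $\lambda>0$ and have $0$ in the closure of $F_x$ on the correct side, which forces $c = 0$; and the case $x = 0$ is immediate since $F_0$ is $\{0\}$ or $\lin(K)$, whose relative interior already contains $0$. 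Second, the final maximality claim deserves one more line: you have shown every relatively open subcone $U$ sits inside $\ri(F)$ for the common smallest face $F$ of its points, and the $\ri(F)$'s are pairwise disjoint, so each $\ri(F)$ is not properly contained in any other, giving both that the $\ri(F)$'s are maximal and that no other maximal relatively open subcones exist. With these glosses your argument is complete and matches the classical proof that the paper outsources to Rockafellar.
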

\begin{proof}
For part \eqref{F:faces1}, see \cite[Thm. 18.2]{Roc}. For part \eqref{F:faces2}, see \cite[Thm. 18.3]{Roc}. 
\end{proof}


From now on, we will restrict our attention to faces of salient closed cones $K \neq \{0\}$, which are easier to deal with.  As explained above, a bounded section of $K$ is a convex body $B$ such that the abstract cone $C(B)$ over $B$ is isomorphic to $K$. Therefore, the non-zero faces of $K$ are exactly the cones over the faces of $B$. Hence, it is easy to translate properties of faces of 
$B$ (like the ones contained in \cite[Chap. 2]{Sch}) into properties about the faces of $K$. 


\vspace{0.1cm}

There is a duality operation between faces of a closed cone $K\subseteq V$ and the faces of its dual cone $K^{\vee}\subseteq V^{\vee}$. Given a face $F$ of $K$, the \textbf{dual  face} (or normal cone) of $F$ is 
\begin{equation}\label{E:dual-fac}
\wh{F}:=\{l \in K^{\vee}\::   l_{|F} = 0\}. 
\end{equation} 
We get a face duality operation (for any salient full closed cone $K$)
\begin{equation}\label{E:dualoper}
\begin{aligned}
D_K:\F(K)& \longrightarrow \F(K^{\vee}),\\
F & \mapsto \wh{F}.
\end{aligned}
\end{equation}

We are now going to see that the faces that behave particularly well with respect to face duality are the following one: a face $F$ of $K$ is said to be \textbf{exposed} if either $F$ is trivial \footnote{This is a convention that will be very useful in what follows and it is different from the standard definition of exposed face, according to which $\{0\}$ is exposed but $K$ is not.} or $F=K\cap H$ for some  \emph{supporting hyperplane} $H$, i.e. a linear hyperplane $H=\{l=0\}$ such that $K$ is contained in the half-space $H^+=\{l\geq 0\}$. We denote by $\E(K)\subseteq \F(K)$ the subset of exposed  faces of $K$. 
Exposed faces of dimension one are called \emph{exposed extremal rays}. 
Since it is easily checked that the (arbitrary) intersection of exposed faces is an exposed face, every face different from $K$ is contained in a minimal exposed face, namely $D_{K^{\vee}}(D_K(F))$ (see \cite{Bar}, \cite[p. 75]{Sch}). Hence the natural inclusion $\E(K)\subseteq \F(K)$ admits a retraction 
$$\epsilon_K: \F(K)\to \E(K)$$
sending  a face $F \in \F(K)$ into the smallest exposed face containing $F$. 

\begin{fact}\label{F:exp-dual}
Let $K\subseteq V$ be a closed cone. Then for any face $F$ of $K$, we have that 
$$D_K(F)\in \E(K^{\vee}), \quad D_K(F)=D_K(\epsilon_K(F)), \quad D_{K^{\vee}}(D_K(F))=\epsilon_K(F).$$ 
\end{fact}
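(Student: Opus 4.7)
The plan is to establish (1) first, since it underpins the proof of (3), and then deduce (2) from (3). The main tool throughout is the following relative-interior lemma: if $l\in K^{\vee}$ and $l(v)=0$ for some $v\in \ri(F)$ with $F$ a face of $K$, then $l$ vanishes identically on $F$. This follows because for every $w\in F$ there is $\varepsilon>0$ with $v-\varepsilon w\in F$, so $0\le l(v-\varepsilon w)=-\varepsilon l(w)$ forces $l(w)\le 0$, and combined with $l(w)\ge 0$ we get $l(w)=0$.

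For (1), after handling the trivial cases (where $F$ is $K$ itself or the minimal face $\lin(K)$, each producing one of $K^{\vee}$ or $\lin(K^{\vee})$, both trivial faces of $K^{\vee}$ and hence exposed by the stated convention), I pick $v\in \ri(F)\setminus\{0\}$ and consider the linear form $\ev_v$ on $V^{\vee}$, viewed as an element of $V^{\vee\vee}\cong V$. Since $v\in K$ we have $\ev_v\ge 0$ on $K^{\vee}$, so $H_v:=\ker \ev_v$ is a supporting hyperplane of the closed cone $K^{\vee}$. The inclusion $D_K(F)\subseteq H_v\cap K^{\vee}$ is tautological, and the reverse inclusion is exactly the relative-interior lemma; hence $D_K(F)=H_v\cap K^{\vee}$ is an exposed face of $K^{\vee}$.

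For (3), I first note $F\subseteq D_{K^{\vee}}(D_K(F))$ directly from the definitions, and by applying (1) to the closed cone $K^{\vee}$ (using $(K^{\vee})^{\vee}=K$) the set $D_{K^{\vee}}(D_K(F))$ is an exposed face of $K$ containing $F$. By definition of $\epsilon_K$, this gives $\epsilon_K(F)\subseteq D_{K^{\vee}}(D_K(F))$. For the reverse inclusion, take any exposed face $E\supseteq F$ of $K$; after disposing of the trivial cases, $E=K\cap\{l=0\}$ for some $l\in K^{\vee}$, and $F\subseteq E$ forces $l|_F=0$, i.e.\ $l\in D_K(F)$. Hence every $v\in D_{K^{\vee}}(D_K(F))$ satisfies $l(v)=0$ and lies in $E$, so $D_{K^{\vee}}(D_K(F))\subseteq E$; intersecting over all exposed $E\supseteq F$ yields $D_{K^{\vee}}(D_K(F))\subseteq \epsilon_K(F)$, proving (3).

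Finally, (2) drops out by applying $D_K$ to the equality of (3) together with the general identity $D_K\circ D_{K^{\vee}}\circ D_K=D_K$ (which unwinds formally: one inclusion from $F\subseteq D_{K^{\vee}}(D_K(F))$ and the fact that $D_K$ reverses inclusions, the other because any $l\in D_K(F)$ vanishes on anything containing $F$, in particular on $D_{K^{\vee}}(D_K(F))$). The main obstacle is genuinely only the careful verification of the relative-interior lemma and a patient treatment of the non-standard trivial-face convention in the edge cases $F=K$ and $F=\lin(K)$, where the supporting-hyperplane construction in (1) degenerates and must be replaced by a direct appeal to the convention; all other steps are inclusion-chasing.
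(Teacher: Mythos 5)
Your proof is correct, and it follows a genuinely different route from the paper's: the paper disposes of this Fact with a one-line citation to Schneider's \emph{Convex bodies}, Thm.~2.1.4, whereas you give a self-contained argument whose only substantive ingredient is the relative-interior lemma (if $l\in K^{\vee}$ kills some $v\in\ri(F)$ then $l$ vanishes on all of $F$), which you state and prove correctly. The logical order --- (1) via the supporting hyperplane $\ker\ev_v$, then (3) by sandwiching $\epsilon_K(F)$ between $D_{K^{\vee}}(D_K(F))$ and the intersection of all exposed faces containing $F$, then (2) by the triple-duality identity --- is sound. Two small remarks. First, you are slightly over-cautious with the trivial cases in (1): your hyperplane argument actually works verbatim for $F=K$ (take $v\in\ri(K)\setminus\{0\}$) and for $F=\lin K\neq\{0\}$; the only case that truly falls back on the paper's convention is $F=\{0\}$ for salient $K$, where $D_K(\{0\})=K^{\vee}$ is exposed only by decree. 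Second, the clause ``any $l\in D_K(F)$ vanishes on anything containing $F$'' is false as stated (such an $l$ need not vanish on $K\supseteq F$). What you need --- and what is true for the right reason --- is that $l$ vanishes on $D_{K^{\vee}}(D_K(F))$ because that set is by \emph{definition} the set of $v\in K$ annihilated by every element of $D_K(F)$, in particular by $l$. With that clause corrected, the identity $D_K\circ D_{K^{\vee}}\circ D_K=D_K$, and hence (2), follow exactly as you say.
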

\begin{proof}
Follows from \cite[Thm. 2.1.4]{Sch}. 
\end{proof}




It follows directly from the definition \eqref{E:dual-fac} that for any face $F$ of $K$ we have the inequality
\begin{equation}\label{E:ineq-faces}
\dim F+\dim \wh F\leq \dim V.
\end{equation}
Faces 
achieving equality in \eqref{E:ineq-faces} are called \textbf{perfect (or regular)}, see \cite[Note 4 at p. 90]{Sch}.


More explicitly, a face $F$ of $K$ is perfect if and only if, setting $c = \codim_V F$,
there exist linear hyperplanes $H_i =\{l_i=0\}_{1\leq i \leq c}$ such that\footnote{When $c=0$, we use the standard convention that an empty intersection of hyperplanes is the whole space. In particular, $K$ is always a perfect face.}
\begin{equation}\label{E:perf-face}
\begin{sis}
& K\subseteq H_i^+=\{l_i \geq 0\}\quad \text{for any }\: 1\leq i \leq c, \\
& \langle F \rangle=\cap_{i=1}^c H_i.
\end{sis}
\end{equation}


Note that $K$ and $\lin{\ov K}$ are perfect faces of $K$ and $\ov K$ respectively. Also if $K$ is a cone containing $0$, then $\{0\}$ is a face if and only $K$ is salient and in that case $\{0\}$ is perfect.

We will denote by $\P(K)$ the set of all perfect faces of $K$ and by $\P_r(K)$ the set of all perfect faces of $K$ of dimension $r$. 
Perfect faces of dimension one are called {\bf perfect extremal rays} or {\bf edges} as in \cite[\S 6]{o1}.

\begin{fact}\label{F:perf-dual}
Let $K\subset V$ be a salient full closed cone of dimension $n:=\dim K$.
Perfect faces are exposed and the face duality operator \eqref{E:dual-fac} induces a bijection
$$D_K:\P_r(K)\stackrel{\cong}{\longrightarrow} \P_{n-r}(K^{\vee}),$$
for any $0\leq r \leq n$, whose inverse is $D_{K^{\vee}}$.
\end{fact}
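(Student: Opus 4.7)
My plan is to establish the two assertions in order: first that perfect faces are exposed, then that $D_K$ restricts to the claimed bijection between perfect faces of complementary dimensions.

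For the first assertion, let $F \in \P_r(K)$ with $c := n - r$. If $c = 0$ then $F = K$, which is exposed by convention, so assume $c \ge 1$ and take defining functionals $l_1, \ldots, l_c$ as in \eqref{E:perf-face}. The functional $l := l_1 + \cdots + l_c$ is nonnegative on $K$, and for $x \in K$ we have $l(x) = 0$ if and only if each $l_i(x) = 0$, i.e.\ $x \in \langle F \rangle$. Thus $K \cap \{l = 0\} = K \cap \langle F \rangle$, and a short argument (any $z \in K \cap \langle F \rangle$ can be written as $z = f - g$ with $f, g \in F \subseteq K$, so that $f = z + g$ and the face property of $F$ forces $z \in F$) yields $F = K \cap \{l = 0\}$, exhibiting $F$ as exposed.

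For the second assertion, I would first show that $D_K$ sends $\P_r(K)$ into $\P_{n-r}(K^\vee)$. Given $F \in \P_r(K)$ with defining functionals $l_1, \ldots, l_c$ as before, these are linearly independent (else the intersection of their kernels would have dimension $> r$) and each lies in $\wh F$, so $\dim \wh F \ge c = n - r$; combined with \eqref{E:ineq-faces} this forces $\dim \wh F = n - r$. To verify that $\wh F$ is perfect in $K^\vee$, pick $v_1, \ldots, v_r \in F$ spanning $\langle F \rangle$ (possible since $F$ spans its linear hull), and view each $v_j$ as a linear functional on $V^\vee$ via evaluation. Each evaluation functional $\mathrm{ev}_{v_j}$ is nonnegative on $K^\vee$ (because $v_j \in K$) and $\wh F \subseteq \bigcap_j \ker \mathrm{ev}_{v_j}$; the latter intersection has codimension $r$, hence dimension $n - r = \dim \wh F$, so $\langle \wh F \rangle = \bigcap_j \ker \mathrm{ev}_{v_j}$ and the $\mathrm{ev}_{v_j}$'s exhibit $\wh F$ as perfect.

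Applying the same reasoning to $K^\vee$ (which by Fact \ref{F:dual-cone} is salient, full, closed, of dimension $n$, with $(K^\vee)^\vee = K$), one obtains that $D_{K^\vee}$ sends $\P_{n-r}(K^\vee)$ into $\P_r(K)$. It remains to check that $D_K$ and $D_{K^\vee}$ are mutually inverse on perfect faces: by Fact \ref{F:exp-dual} we have $D_{K^\vee}(D_K(F)) = \epsilon_K(F)$ for every face $F$, but since perfect faces are exposed (first assertion), $\epsilon_K(F) = F$, giving $D_{K^\vee} \circ D_K = \mathrm{id}$ on $\P(K)$; the symmetric identity $D_K \circ D_{K^\vee} = \mathrm{id}$ on $\P(K^\vee)$ follows identically.

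The main obstacle is the dimension count $\dim \wh F = n - r$ together with the construction of the $r$ functionals on $V^\vee$ that witness perfectness of $\wh F$: everything else is formal, but establishing perfectness of the dual face really does use both inequalities $\dim \wh F \ge n - r$ (from the dual functionals $l_i$) and $\dim \wh F \le n - r$ (from \eqref{E:ineq-faces}), together with the elementary but essential observation that $F$ contains a basis of $\langle F \rangle$. Once perfectness is bidirectional, the bijection statement reduces cleanly to Fact \ref{F:exp-dual}.
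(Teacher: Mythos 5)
Your proof is correct, and its first half (perfect implies exposed) is essentially identical to the paper's, down to taking $l = \sum_i l_i$ and using $\langle F\rangle = F - F$ together with the face property to conclude $K \cap \{l=0\} = F$. The second half takes a slightly more hands-on route: you establish $\dim \wh F = n-r$ via the two inequalities and then explicitly construct $r$ defining evaluation functionals $\ev_{v_j}$ witnessing that $\wh F$ is perfect, whereas the paper leverages the exposedness just proved to get $\wh{\wh F} = \epsilon_K(F) = F$ via Fact \ref{F:exp-dual}, so that $\dim \wh{\wh F} + \dim \wh F = \dim F + \dim \wh F = \dim V$ is literally the definition of $\wh F$ being perfect, with no explicit functionals needed. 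The paper's version is shorter because it reuses the first half; yours is more constructive and makes the defining hyperplanes of $\wh F$ visible, which can be useful elsewhere. Your final step (mutual inverseness from exposedness plus Fact \ref{F:exp-dual}) is the same as the paper's invocation of the bijection $D_K\colon \E(K) \to \E(K^\vee)$.
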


\begin{proof}
Let us first show that perfect faces are exposed. Let $F$ be a perfect  face of $K$. 
Then there exist linear hyperplanes $H_i =\{l_i=0\}_{1\leq i \leq c}$, where $c$ is the codimension of $F$ in $V$, satisfying \eqref{E:perf-face}. 
Let $a_i > 0, 1 \le i \le c$, let $l = \sum_{i=1}^c a_il_i$ and let $H = \{l=0\}$ and $H^+ = \{l\geq 0\}$. Clearly 
$$
\begin{sis}
& K\subseteq \cap_{i=1}^c H_i^+\subseteq H^+, \\
&  F\subset \langle F\rangle= \cap_{i=1}^c H_i\subseteq H.
\end{sis}
$$
It remains to prove that $K\cap H\subseteq F$. Take an element $x \in K \cap H$. Then  $0 = l(x) = \sum_{i=1}^c a_il_i(x)$ and $l_i(x) \ge 0$, for every $1 \le i \le c$, whence $l_i(x) = 0$ for $1 \le i \le c$. Hence
$x \in K \cap \left(\cap_{i=1}^c H_i \right) = K \cap \langle F \rangle$. Since $ \langle F \rangle= F - F$,  there are $y,z \in F$ such that $x = y - z$. Therefore $y = x + z\in F$ with $x, z \in K$, which by the definition of face, implies that  $x \in F$. Thus $F$ is exposed.

If $F$ is perfect then, using that $F$ is exposed by what was just shown and using Fact \ref{F:exp-dual}, we get that 
$$\dim \wh{\wh{F}}+ \dim \wh F= \dim F+\dim \wh F=\dim V,$$
which shows that $\wh F$ is a perfect face of $K^{\vee}$.  Therefore, the bijection $D_K:\E(K)\stackrel{\cong}{\longrightarrow} \E(K^{\vee})$ restricts to a bijection between  $\P(K)$ and $\P(K^{\vee})$ which, by definition, sends a face of dimension $r$ into a face of dimension $n-r$.
\end{proof}


\begin{remark}\label{perf}
Let $K\subset V$ be a salient full closed cone. It follows from the definition of a perfect face  together with Fact \ref{F:perf-dual} that if $F$ is a perfect face of $K$  then $F$ is a full cone in $\langle F\rangle$ and $\wh{F}$ is a full cone in $\langle \wh{F}\rangle=\langle F\rangle^{\perp}$. Conversely, it is easy to see that if $L\subseteq V$ is a subspace such that $K\cap L$ is a full cone in $L$ and $K^{\vee}\cap L^{\perp}$ is a full cone in $L^{\perp}$ then $F:=K\cap L$ is a perfect face of $K$ with dual face being perfect and equal to $K^{\vee}\cap L^{\perp}$.   
\end{remark}

\begin{remark}\label{R:faces}
\noindent 
\begin{enumerate}[(i)]
\item 
For a face $F$ of a full salient closed cone $K$, the chain of implications:
$$ F \ \text{is perfect} \Longrightarrow F \ \text{is exposed}   \Longrightarrow F \ \text{is  face}
$$
are all strict, except in codimension one (i.e. for facets) where they coincide.
\begin{figure}
\begin{tikzpicture}[scale=2]
\path (-1cm,-1cm) coordinate (B) ;
\path (-2cm,-1cm) coordinate (A) ;
\path (-2cm,1cm) coordinate (E) ;
\path (-1cm,1cm) coordinate (D) ;

\filldraw[color=black!30!white] (A) node [fill, color=black, shape=circle, scale=0.3, label=below:$\color{black}A$]{} -- (B) node [fill, color=black, shape=circle, scale=0.3, label=below:$\color{black}B$]{}
         arc (-90:30:1cm) node [fill, color=black, shape=circle, scale=0.3, label=30:$\color{black}C$]{} arc (30:90:1cm) -- (D)  node [fill, color=black, shape=circle, scale=0.3, label=above:$\color{black}D$]{} -- (E) node [fill, color=black, shape=circle, scale=0.3, label=above:$\color{black}E$]{}  -- cycle; 

\draw (A)  -- (B)  arc (-90:90:1cm)  -- (D)  -- (E)  -- cycle;

\node[label=$K \cap \{ l \uguale 1 \}$] at (-1cm,-0.25) {};

\path (3.5cm,0cm) coordinate (AEv) ;
\path (2.5cm,1cm) coordinate (ABv) ;
\path (2.5cm,-1cm) coordinate (DEv) ;

\filldraw[color=black!30!white] (AEv) node [fill, color=black, shape=circle, scale=0.3, label=right:$\color{black}\widehat{AE}$]{} -- (ABv) node [fill, color=black, shape=circle, scale=0.3, label=above:$\color{black}\widehat{AB}$]{}
         arc (90:210:1cm) node [fill, color=black, shape=circle, scale=0.3, label=210:$\color{black}\widehat{C}$]{} arc (210:270:1cm) -- (DEv) node [fill, color=black, shape=circle, scale=0.3, label=below:$\color{black}\widehat{DE}$]{} --  cycle; 

\draw (AEv)  -- (ABv) arc (90:270:1cm) -- (DEv) --  cycle; 

\node[label=$K^{\vee} \cap \{ l' \uguale 1 \}$] at (2.5cm,-0.25) {};
\node[label=-45:$\widehat{E}$] at (3cm,-0.5cm) {};
\node[label=45:$\widehat{A}$] at (3cm,0.5cm) {};

\end{tikzpicture}
\caption {
\color{black} Sections taken with $l \in \ri(K^{\vee})$ and $l' \in \ri(K)$. The 1-dimensional faces $A,E$ and the facets $AE, AB,DE$ are perfect. The 1-dimensional face $C$ is exposed, but not perfect. The 1-dimensional faces $B$ and $D$ are not exposed.}
\label{due}
\end{figure}
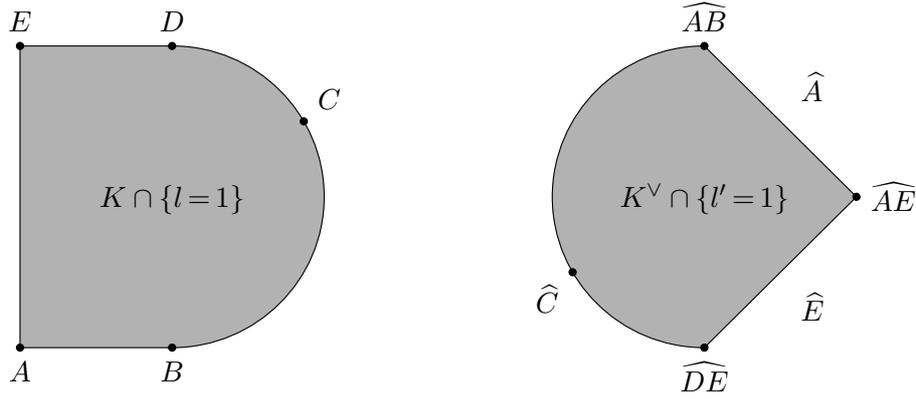

\noindent In Figure \ref{due} we show a $2$-dimensional convex body (section of a $3$-dimensional closed salient full cone) 
illustrating the 
three \color{black}different types of faces. 
\item Exposed and perfect faces are not transitive, i.e. if $F$ is an exposed (resp. perfect) face of $K$ and $G$ is a face of $F$, then $G$ is not necessarily an exposed face of $K$, although it will obviously be a face (see again Figure \ref{due}).
\end{enumerate}
\end{remark}

Faces also provide one way of representing a closed cone, the other being through supporting hyperplanes. 

\begin{fact}\label{F:repres}
Let $K$ be a closed cone. 
\begin{enumerate}
\item \label{F:repres1} [External representation]
$K$ is the intersection of its supporting half-spaces;
\item \label{F:repres2} [Internal representations]
If $K$ is salient then $K$ is the convex hull of its extremal rays.
\end{enumerate}
\end{fact}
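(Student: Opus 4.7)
My plan is to deduce both statements directly from facts already recorded in the appendix, reducing (2) to the classical (finite-dimensional) Krein--Milman theorem applied to a bounded section of $K$.

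For part \eqref{F:repres1}, the external representation follows at once from the bipolar identity. Since $K$ is closed, Fact \ref{F:dual-cone}\eqref{F:dual-cone1} gives $K=(K^\vee)^\vee$, and unraveling the definition one has
$$K=(K^\vee)^\vee=\{x\in V : l(x)\ge 0 \text{ for all } l\in K^\vee\}=\bigcap_{l\in K^\vee}\{l\ge 0\}.$$
By definition of $K^\vee$, each half-space $\{l\ge 0\}$ with $l\in K^\vee\setminus\{0\}$ has as boundary a hyperplane $\{l=0\}$ that supports $K$, and conversely every supporting half-space of $K$ is of this form. Hence $K$ is the intersection of its supporting half-spaces.

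For part \eqref{F:repres2}, I reduce to the case of a convex body. Replacing $V$ by $\langle K\rangle$ we may assume $K$ is full. Since $K$ is also salient and closed, Fact \ref{F:dual-cone}\eqref{F:dual-cone2} implies that $K^\vee$ is full, hence $\ri(K^\vee)$ is non-empty; fix any $l\in\ri(K^\vee)$. The section $B:=K\cap\{l=1\}$ is then a non-empty compact convex set with $K=\bbR_{\ge 0}\cdot B$, and the assignment $v\mapsto \cone(v)$ is a bijection between the extreme points of $B$ and the extremal rays of $K$. Consequently, the statement that $K$ is the convex hull of its extremal rays is equivalent to the statement that $B$ is the convex hull of its extreme points.

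The last step is the classical Krein--Milman (Minkowski) theorem for convex bodies in a finite-dimensional real vector space, which can be established by induction on $\dim B$: any non-extreme point of $B$ lies in the relative interior of a segment in $B$, and extending this segment to the relative boundary places its endpoints in proper compact convex faces of $B$, to which the inductive hypothesis applies; a compactness argument then shows that the convex hull of the extreme points is all of $B$. No step presents a genuine obstacle: the whole argument is merely a translation between the cone-theoretic and the bounded-section pictures set up in the appendix, combined with the bipolar theorem and Krein--Milman.
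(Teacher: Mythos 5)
Your argument is correct. The paper itself offers no proof of this fact, simply citing Rockafellar (Thms.\ 11.5, 18.5, 18.7); your write-up supplies the standard argument behind those references: part (1) is the bipolar theorem $K=(K^\vee)^\vee$ from Fact \ref{F:dual-cone}\eqref{F:dual-cone1} (note that with the paper's convention a supporting hyperplane is always \emph{linear}, so the identification of supporting half-spaces with $\{l\ge 0\}$, $l\in K^\vee\setminus\{0\}$, is exact), and part (2) is the passage to a compact bounded section $B=K\cap\{l=1\}$ with $l\in\ri(K^\vee)$ followed by Minkowski's (finite-dimensional Krein--Milman) theorem. Two small points are left implicit but are routine: you should dispose of the trivial case $K=\{0\}$ separately (there $B=\emptyset$ and $K$ has no extremal rays), and the asserted bijection between $\mathrm{ext}(B)$ and the extremal rays of $K$, together with the equality $\conv\bigl(\bigcup_{v\in\mathrm{ext}(B)}\bbR_{\ge 0}v\bigr)=\bbR_{\ge 0}\cdot\conv(\mathrm{ext}(B))$, deserve a line of justification; both are elementary.
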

\begin{proof}
For a proof, see \cite[Thm. 11.5]{Roc}, \cite[Thm. 18.5]{Roc} and \cite[Thm. 18.7]{Roc}. 
\end{proof}

\vspace{0.1cm}

The nicest cones are the polyhedral cones:  a cone $K\subseteq V$ is called \textbf{polyhedral} if one of the following equivalent condition holds (see \cite[Thm. 19.1]{Roc})
\begin{enumerate}[(i)]
\item $K$ it is the conic hull of a  finite set of points of $V$;
\item $K$ is the intersection of finitely many closed half-spaces passing through the origin:
\item $K$ is closed and it has finitely many faces. 
\end{enumerate} 
The dual of a polyhedral cone is polyhedral (see \cite[Cor. 19.2.2]{Roc}) and every face of a polyhedral cone is perfect (see \cite[Sec. 2.4]{Sch}).

Let $K \subseteq V$ be a closed cone and let $v \in \partial K$. We say that $K$ is  \textbf{locally polyhedral} at $v$ if there exist a neighborhood $U$ of $v$ in $V$ and a polyhedral cone $P \subseteq V$ such that $K \cap U = P \cap U$. We will say that $K$ is \textbf{locally polyhedral} if it is such at every point $v \in \partial K$. We say that $K$ is \textbf{locally finitely generated} at $v$ if there exist a subset $C \subset K$ and $v_1,\ldots,v_s \in K$ such that $C$ is closed in $V$, $\alpha x \in C$ for every $x \in C$ and $\alpha > 0$, $v \not\in C$ and $K$ is generated by $C$ and $\{v_1,\ldots,v_s \}$. 


\begin{remark}
Let $K \subseteq V$ be a cone and let $v \in \partial K$. It is easily seen that if $K$ is locally polyhedral at $v$ then it is locally finitely generated at $v$.
\end{remark}

We now prove the main result of this appendix. This is crucial in the proof of Corollary \ref{diag-poly}.
\begin{prop}
\label{edges1}
Let $V$ be an $\R$-vector space of dimension $\rho$. 
Let $\{\delta_1, \ldots, \delta_s\} \subseteq Y \subset V$ be two subsets and assume that $\dim(\cone(\delta_1,\ldots, \delta_s)) = r \ge 1$. Let $K(Y)$ be the closed convex cone generated by $Y$ and suppose we have two linear maps
$l, \phi : V \to \R$ with the following properties:
\begin{itemize}
\item[(i)] $\phi \in \ri(K(Y)^{\vee})$, that is $\phi(x) > 0$ for every $0 \neq x \in K(Y)$;
\item[(ii)] $l \in K(Y)^{\vee}$, that is $l(x) \ge 0$ for every $x \in K(Y)$;
\item[(iii)] $l(\delta_i) = 0$ for $1 \le i \le s$;
\item[(iv)] $(l - \phi)(y) \ge 0$ for every $y \in Y \setminus \{\delta_1,\ldots, \delta_s\}$.
\end{itemize}
Then
\begin{itemize}
\item[(a)] $K(Y)$ is locally finitely generated at every $v \in \{l - \phi < 0 \}$, in particular at every $v \in \cone(\delta_1,\ldots, \delta_s), v \ne 0$;
\item[(b)] $\cone(\delta_1,\ldots, \delta_s) = \{l=0\} \cap K(Y)$;
\item[(c)] every extremal ray of $K(Y)$ that is contained in $\{l - \phi < 0\}$ is $\R_{\ge 0}\delta_i$ for some $i$;
\item[(d)] every face of $\cone(\delta_1,\ldots, \delta_s)$ is a perfect face of $K(Y)$;
\item[(e)] if $\rho=r+1$ then $\cone(l) = D_{K(Y)}(\cone(\delta_1,\ldots, \delta_s))$ is an edge of $K(Y)^{\vee}$.
\end{itemize}
\end{prop}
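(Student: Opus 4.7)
The core idea is the decomposition
\[
K(Y) = K_1 + K_2, \quad K_1 := \overline{\cone(Y \setminus \{\delta_1, \ldots, \delta_s\})}, \quad K_2 := \cone(\delta_1, \ldots, \delta_s),
\]
from which (a), (b), (c) will follow quickly. Two ingredients produce the decomposition: first, (iv) and continuity give $K_1 \subseteq \{l - \phi \ge 0\}$; second, (i) makes $K(Y)$ salient, so $K_1 \cap (-K_2) = \{0\}$, and the classical finite-dimensional fact that the sum of two closed convex cones with trivial opposite intersection is itself closed shows $K_1 + K_2$ is closed. Since $\cone(Y) \subseteq K_1 + K_2$, passing to closures gives $K(Y) \subseteq K_1 + K_2$, with the reverse inclusion being trivial.

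With this in hand: for (a) I take $C := K_1$ and $v_i := \delta_i$; then $K(Y) = K_1 + K_2$ is the conic hull of $K_1 \cup \{\delta_1, \ldots, \delta_s\}$, and any $v \in \{l - \phi < 0\}$ lies outside $K_1 \subseteq \{l - \phi \ge 0\}$. For (b), writing $v \in \{l = 0\} \cap K(Y)$ as $v_1 + v_2$ with $v_i \in K_i$, the chain $0 = l(v_1) \ge \phi(v_1) \ge 0$ forces $v_1 = 0$ and hence $v \in K_2$. For (c), extremality of $\R_{\ge 0} v$ with $(l-\phi)(v) < 0$ applied to $v = v_1 + v_2$ forces $v_1, v_2$ to be proportional to $v$; since $v_1 \in K_1 \subseteq \{l-\phi \ge 0\}$ while $v$ is not, $v_1 = 0$, so $v$ generates an extremal ray of $K_2$, hence $v \in \R_{\ge 0} \delta_i$ for some $i$.

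The main obstacle is (d), and the crux is to show that $K_2$ itself is a perfect face of $K(Y)$, i.e.\ $\dim \widehat{K_2} = \rho - r$. By Remark~\ref{perf} applied with $L = \langle K_2 \rangle$, this reduces (after passing to the quotient $\pi\colon V \to V/\langle K_2\rangle$) to showing $\overline{\pi(K_1)}$ is salient. The plan is a compactness argument: given both $\pi(v)$ and $-\pi(v)$ in $\overline{\pi(K_1)}$ with approximating sequences $v_n, w_n \in K_1$, the representatives can be chosen so that $l(v_n), l(w_n)$ are bounded, and since $0 \le \phi \le l$ on $K_1$ and $\phi \in \ri K(Y)^\vee$ forces $\phi$-sublevel sets of $K(Y)$ to be compact, I extract limits $v^*, w^* \in K_1$ whose sum lies in $K(Y) \cap \langle K_2 \rangle = K_2$ by (b); then $0 = l(v^* + w^*) \ge \phi(v^*) + \phi(w^*) \ge 0$ combined with (i) forces $v^* = w^* = 0$, hence $\pi(v) = 0$.

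Once $K_2$ is perfect, an arbitrary face $F \subseteq K_2$ of codimension $k$ in $K_2$ is perfect in the polyhedral cone $K_2$, so is cut out inside $\langle K_2\rangle$ by $m_1, \ldots, m_k \in K_2^{\vee}$. I extend each $m_i$ arbitrarily to $V$ and add a sufficiently large multiple of $\mu_0 \in \ri(\widehat{K_2})$; because $K_1 \cap \langle K_2 \rangle = \{0\}$ (same argument as (b)), $\mu_0$ is strictly positive on $K_1 \setminus \{0\}$ and hence bounded below by a positive multiple of $\phi$ on the compact section $\{\phi = 1\} \cap K_1$, so for large enough coefficient the adjusted extension is nonnegative on $K_1$, hence on $K(Y)$. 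Together with a basis of $\widehat{K_2}$ these yield $k + (\rho - r) = \rho - \dim F$ linearly independent elements of $\widehat{F}$, forcing equality in \eqref{E:ineq-faces} and so $F$ is perfect. Part (e) is then immediate: when $\rho = r+1$, $\widehat{K_2}$ is one-dimensional and contains the nonzero $l$, so $\cone(l) = \widehat{K_2}$, which is the dual of the perfect face $K_2$ and hence, by Fact \ref{F:perf-dual}, a perfect face of $K(Y)^\vee$ of dimension $1$, i.e.\ an edge.
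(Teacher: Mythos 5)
Your proof is correct, and it takes a genuinely different route from the paper's, principally in the proof of the decomposition and in part~(d). For the decomposition, the paper works directly with the bigger slice $C = K(Y)\cap\{\varphi\ge 0\}$ (where $\varphi = l-\phi$) and proves $K(Y)=\cone(\delta_1,\ldots,\delta_s)+C$ by a hands-on sequence argument relying on Lemma~\ref{comp}; you instead take the tighter $K_1=\overline{\cone(Y\setminus\{\delta_i\})}$ and invoke the classical fact that a sum of closed cones with trivial opposite intersection is closed (e.g.\ \cite[Cor.~9.1.3]{Roc}). Both give (a)--(c) essentially identically. For (d), the paper sandwiches $K(Y)$ in an explicit polyhedral cone $P=\cone(\delta_1,\ldots,\delta_s)+Q$ (with $Q$ a polyhedral cone squeezed between $C$ and $\{\varphi\ge0\}$), shows $\cone(\delta_i)$ is a face of $P$, and transports perfectness of faces from $P$ to $K(Y)$ via the inclusion $P^\vee\subseteq K(Y)^\vee$. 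You instead prove perfectness of $K_2=\cone(\delta_i)$ directly by showing $K(Y)^\vee\cap\langle K_2\rangle^\perp$ is full, reduced via the quotient $\pi$ to saliency of $\overline{\pi(K_1)}$, which you get by a compactness argument (the needed boundedness of $l(v_n), l(w_n)$ holds because $l$ descends to $V/\langle K_2\rangle$); you then bootstrap to arbitrary faces of $K_2$ by extending the cutting functionals $m_i$ from $\langle K_2\rangle$ to $V$ and perturbing by a large multiple of $\mu_0\in\ri(\widehat{K_2})$, again using compactness of $K_1\cap\{\phi=1\}$. The step $\mu_0>0$ on $K_1\setminus\{0\}$ is justified since $K_1\cap\langle K_2\rangle=\{0\}$ (as you note) and $\mu_0\in\ri(\widehat{K_2})$ vanishes on $K(Y)$ exactly on $K_2$ (a standard relative-interior argument for the exposed face $K_2$). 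The polyhedral-sandwich argument of the paper is perhaps more geometric and avoids discussing $\ri(\widehat{K_2})$, while your route is more in the spirit of dual-cone computations and reuses the compactness mechanism throughout; both are complete proofs. (In (e), like the paper, you implicitly assume $l\neq0$, which is forced as soon as $Y$ contains an element outside $\{\delta_1,\ldots,\delta_s,0\}$.)
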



To prove Proposition \ref{edges1} we need the following simple fact (whose proof is left to the reader).
\begin{lemma}
\label{comp}
Let $K \subset V$ be a closed convex cone and let $\phi \in \ri(K^{\vee})$. 

For any $b > 0$ the subsets
\[ K^{\le b} = \{x \in K : \phi(x) \le b\} \ \mbox{and} \ \ K^{= b} = \{x \in K : \phi(x) = b\} \]
are compact.
\end{lemma}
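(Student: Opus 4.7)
The plan is to first reduce the claim to boundedness of $K^{\le b}$. Both sets are manifestly closed in $V$: they are intersections of the closed set $K$ with the closed sets $\{\phi \le b\}$ and $\{\phi = b\}$ (preimages of closed subsets of $\R$ under the continuous linear map $\phi$). Since $K^{=b} \subseteq K^{\le b}$, once $K^{\le b}$ is shown to be bounded, both sets are closed and bounded in the finite-dimensional space $V$, hence compact.

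To prove boundedness of $K^{\le b}$, I would equip $V$ with an arbitrary norm $\|\cdot\|$ and consider the ``spherical section'' $\Sigma := K \cap \{x \in V : \|x\| = 1\}$, which is a closed and bounded, hence compact, subset of $V$ (and the case $K = \{0\}$ can be dismissed at the outset, since then $K^{\le b} = \{0\}$ and $K^{=b} = \emptyset$). The critical input is that $\phi$ is strictly positive on $\Sigma$. This uses the hypothesis $\phi \in \ri(K^{\vee})$ together with the implicit (and essential) fact that $K$ is salient: indeed, as recorded in Proposition \ref{edges1}(i), membership in $\ri(K^\vee)$ is equivalent to $\phi(x) > 0$ for every nonzero $x \in K$ precisely when $K$ is salient. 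Continuity of $\phi$ on the compact set $\Sigma$ then forces $\phi$ to attain a positive minimum $m := \min_{y \in \Sigma} \phi(y) > 0$.

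The conclusion follows by homogeneity: for any $0 \neq x \in K$, writing $x = \|x\|\,(x/\|x\|)$ with $x/\|x\| \in \Sigma$ gives $\phi(x) = \|x\|\,\phi(x/\|x\|) \ge m\,\|x\|$. Hence every $x \in K^{\le b}$ satisfies $\|x\| \le b/m$, so $K^{\le b}$ is contained in the closed ball of radius $b/m$, which is what was needed.

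The only substantive point requiring care is the salience hypothesis on $K$: if $K$ contained a nontrivial line through the origin, $\phi$ would vanish on that line and $K^{\le b}$ would be unbounded, so the lemma is really a statement about salient closed cones. This is not an obstacle but merely a matter of recording the standing convention, consistent with how the lemma is invoked in the proof of Proposition \ref{edges1}.
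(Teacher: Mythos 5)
Your proof is correct; the paper leaves this lemma to the reader, and your argument --- closedness, plus boundedness via a positive minimum of $\phi$ on the compact spherical section $K \cap \{\|x\|=1\}$ and homogeneity --- is the standard one. You are also right to flag the implicit salience hypothesis: if $\lin(\ov K)\neq\{0\}$ then every $l\in K^{\vee}$ vanishes on $\lin(\ov K)\subseteq K$, so $K^{\le b}$ would contain a line and the statement would fail; in every application in the paper (Proposition \ref{edges1} and Corollary \ref{edges2}) the cone is salient, as condition (i) there makes explicit.
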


\renewcommand{\proofname}{Proof of Proposition \ref{edges1}}
\begin{proof}
We set 
$\varphi = l -\phi$. 
Let $v \in K(Y)$. We have that $v = \lim\limits_{m \to +\infty} v_m$ with
\[ v_m = \sum_{i=1}^{p_m} a_{i,m} y_{i,m} \ \mbox{and} \ a_{i,m} \ge 0, \ y_{i,m} \in Y. \]
Reordering the 
vectors $y_{i,m}$ \color{black}we can assume that $v_m = u_m + w_m$ with $u_m \in \cone(\delta_1, ..., \delta_s)$ and $w_m$ a conical combination of $y_{i,m} \in Y \setminus \{\delta_1,\ldots, \delta_s\}$, so that 
$\varphi(w_m) \ge 0$ by (iv).
Now $\phi(v) = \lim\limits_{m \to +\infty} \phi(v_m)$ whence $\{\phi(v_m), m \in \NN \}$ is bounded. Since 
\[ \phi(v_m) = \phi(u_m) + \phi(w_m) \ge \phi(u_m) \] 
we get that also $\{\phi(u_m), m \in \NN \}$ is bounded. By Lemma \ref{comp}, upon passing to a subsequence, we can assume that the $u_m$ converge to some $u \in \cone(\delta_1,\ldots, \delta_s)$ (since $\cone(\delta_1,\ldots, \delta_s)$ is closed) and therefore that also the $w_m$ converge to some $w \in K(Y)$ with 
$\varphi(w) \ge 0$.

Hence $v = u + w$. This proves that 
\begin{equation}
\label{gen}
K(Y) \ \mbox{is generated by} \ K(Y) \cap \{\varphi \ge 0\} \ \mbox{and} \ \delta_1,\ldots, \delta_s. 
\end{equation}
Now (a) follows by setting 
$C = K(Y) \cap \{\varphi \ge 0\}$. To see (b), one inclusion being obvious by (iii), assume that $l(v)=0$. Then also $l(w)=0$ and therefore 
$0 \le \varphi(w) = -\phi(w)$, giving $\phi(w)=0$, whence $w = 0$ and $v = u  \in \cone(\delta_1,\ldots, \delta_s)$, so that we get (b). To see (c) suppose that $\R_{\ge 0} v$ is an extremal ray of $K(Y)$ such that 
$\varphi(v) < 0$.
It follows by extremality that $w = c v$ for some $c \ge 0$. But 
$0 \le \varphi(w) = c \varphi(v) \le 0$, giving $c = 0$. Then $w = 0$ and $v = u \in \cone(\delta_1,\ldots, \delta_s)$, that is $v = \sum_{i=1}^s a_i \delta_i$ with $a_i \ge 0$. But $v \neq 0$, whence there is an $i$ such that $a_i > 0$ and by extremality we get that $a_i \delta_i \in \R_{\ge 0} v$, thus giving (c).

Let us now prove (d). Since $C=K(Y) \cap \{\varphi \ge 0\}$ is a closed convex cone of $V$ contained in $\{\varphi \ge 0\}$ such that $\phi \in \ri(C^{\vee})$, we claim that we can pick  a polyhedral cone $Q$ of $V$ such that 
\begin{equation}\label{polyQ}
C \subseteq Q\subseteq  \{\varphi \ge 0\} \quad \text{ and } \quad   \phi \in \ri(Q^{\vee}).
\end{equation}
Indeed, since $\phi \in \ri(C^{\vee})$, Lemma \ref{comp} implies that the section $C\cap \{\phi=1\}$ is a  convex compact subset of $\{\phi=1\}$, which is moreover contained in the half-space $\{\varphi \geq 0\} \cap \{\phi=1\}$. Therefore, we can pick a polytope $B\subset \{\varphi \geq 0\} \cap \{\phi=1\}$ such that $C\cap \{\phi=1\}\subseteq B$. Now the cone $Q\subset V$ over $B$
satisfies \eqref{polyQ}.

Set ${\mathcal D} = \cone(\delta_1,\ldots, \delta_s)$. We now claim that 
\begin{equation}\label{big-poly}
P:={\mathcal D}+Q \subset V \text{ is a polyhedral cone such that } K(Y)\subseteq P \text{ and } {\mathcal D} \text{ is a face of } P.  
\end{equation}
Indeed, since ${\mathcal D}$ and $Q$ are polyhedral cones, then so is $P$. Moreover, $P$ contains $K(Y)$ since $K(Y)$ is generated by ${\mathcal D}$ and $C$ by \eqref{gen} and $Q$ contains $C$ by \eqref{polyQ}. Finally, let us show that $\{l=0\}$ is a supporting hyperplane for ${\mathcal D}$ inside $P$. First of all, $P$ is contained in $\{\phi+\varphi=l\geq 0\}$ since $l$ vanishes on ${\mathcal D}$ by (iii) and $Q$ is contained in $\{\phi+\varphi=l\geq 0\}$ by \eqref{polyQ}. Moreover, using \eqref{polyQ}, a point $p=k+q\in {\mathcal D}+Q=P$ with $k \in \mathcal{D}$ and $q\in Q$ belongs to $\{l=0\}$ if and only if 
$$0=l(p)=l(k)+l(q)=l(q) \Leftrightarrow \phi(q)=\varphi(q)=0 \Leftrightarrow q=0.$$
In other words, $\{l=0\}\cap P={\mathcal D}$, which concludes the proof of \eqref{big-poly}. 

Using  \eqref{big-poly}, we now conclude the proof of part (d). Indeed, take any face $F$ of ${\mathcal D}$ and let $c:= \codim_V F$. 
Now $F$ is also a face of $P$ since ${\mathcal D}$ is a face of $P$ by \eqref{big-poly}. Since $P$ is polyhedral by \eqref{big-poly}, 
the face $F$ is perfect in $P$, so there exists linear functionals $\{l_1,\ldots, l_c\}$ on $V$  such that 
\begin{equation}\label{E:int-li}
\langle F\rangle=\bigcap_{i=1}^c \{l_i=0\} \quad \text{ and } l_i\in P^{\vee} \text{ for any } 1\leq i \leq c. 
\end{equation}
Since $K(Y)\subseteq P$, we have that $P^{\vee}\subseteq K(Y)^{\vee}$; hence $l_i\in K(Y)^{\vee}$  for any  $1\leq i \leq c$. Then  \eqref{E:int-li} implies that $F$ is also a perfect face of $K(Y)$, i.e. (d) holds true.

To see (e) just notice that, since $\rho = r+1$, we have that $\langle {\mathcal D} \rangle = \{l = 0\}$ and therefore $\langle {\mathcal D} \rangle^{\perp} = \langle l \rangle$ and $D_{K(Y)}({\mathcal D}) = \hat {\mathcal D} = \cone(l)$. Since ${\mathcal D}$ is a perfect face of $K(Y)$ it follows by definition that $\cone(l)$ is a perfect face, that is an edge, of $K(Y)^{\vee}$.
\end{proof}
\renewcommand{\proofname}{Proof}


We note the following consequence of Proposition \ref{edges1} for cones of pseudoeffective cycles.

\begin{cor}
\label{edges2}
Let $X$ be a projective variety of dimension $d$ and let $n$ be an integer such that $1 \le n \le d -1$. Let $I$ be the set of classes in $N_n(X)$ of irreducible $n$-dimensional subvarieties of $X$. Let $\delta_1, \ldots, \delta_s \in I$ and suppose that $1 \le \dim(\cone(\delta_1,\ldots, \delta_s)) < \dim N_n(X)$. Moreover suppose that we have an ample $\R$-divisor $A$ and $\eta \in \Nef^n(X)$ such that:
\begin{itemize}
\item[(i)] $\eta \cdot \delta_i = 0$ for $1 \le i \le s$;
\item[(ii)] $(\eta - A^n) \cdot y \ge 0$ for every $y \in I \setminus \{\delta_1, \ldots, \delta_s\}$.
\end{itemize}
Then
\begin{itemize}
\item[(a)] $\Pseff_n(X)$ is locally finitely generated at every $\alpha \in \{\eta - A^n < 0\}$, in particular at every $\alpha \in \cone(\delta_1,\ldots, \delta_s), \alpha \ne 0$;
\item[(b)] $\cone(\delta_1,\ldots, \delta_s) = \{
\eta\color{black}=0\} \cap \Pseff_n(X)$;
\item[(c)] every extremal ray of $\Pseff_n(X)$ that is contained in $\{\eta - A^n < 0\}$ is $\R_{\ge 0}\delta_i$ for some $i$;
\item[(d)] every face of $\cone(\delta_1,\ldots, \delta_s)$ is a perfect face of $\Pseff_n(X)$;
\item[(e)] if $\dim(\cone(\delta_1,\ldots, \delta_s)) = \dim N_n(X) - 1$ then $\cone(\eta)$ is an edge of $\Nef^n(X)$.
\end{itemize} 
\end{cor}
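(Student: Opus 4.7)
The plan is to deduce the corollary as a direct application of Proposition \ref{edges1}, with the following identifications: take $V = N_n(X)$, take $Y = I$ (the set of classes of irreducible $n$-dimensional subvarieties of $X$), and identify $V^{\vee}$ with $N^n(X)$ via the intersection product, so that $l = \eta$ and $\phi = A^n$ are viewed as linear functionals on $V$.

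First I would identify $K(Y)$ with $\Pseff_n(X)$: this is immediate from the definition of $\Pseff_n(X)$ as the closed convex cone generated by effective $n$-cycles, since every effective $n$-cycle is a nonnegative combination of elements of $I$. Then I would check the four hypotheses of Proposition \ref{edges1}:
\begin{itemize}
\item (iii) $\eta \cdot \delta_i = 0$ is hypothesis (i) of the corollary.
\item (iv) $(\eta - A^n) \cdot y \geq 0$ for $y \in I \setminus \{\delta_1, \ldots, \delta_s\}$ is hypothesis (ii) of the corollary.
\item (ii) $\eta \in K(Y)^\vee = \Pseff_n(X)^\vee = \Nef^n(X)$ is the assumption $\eta \in \Nef^n(X)$.
\item (i) $\phi = A^n \in \ri(\Pseff_n(X)^\vee)$, i.e.\ $A^n \cdot \alpha > 0$ for every nonzero $\alpha \in \Pseff_n(X)$.
\end{itemize}
The last point is the only step that is not tautological. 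The hard part (such as it is) will be verifying this strict positivity; this is a standard ``higher-codimension Kleiman'' type statement and follows from the fact that an ample class $A$ has $A^n \cdot [Z] > 0$ for every $n$-dimensional subvariety $Z$, together with the strict positivity of ample intersections on pseudoeffective cycles established in \cite[Prop.~1.3]{BFJ} and \cite[Thm.~1.4]{fl1} (which in particular imply that $\Pseff_n(X)$ is salient and that ample powers lie in the interior of the nef cone).

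With the four hypotheses verified, conclusions (a), (b), (c), (d) of Proposition \ref{edges1} translate verbatim into conclusions (a), (b), (c), (d) of the corollary, using the identification $\eta = l$ and $A^n = \phi$. For (e), one invokes the final assertion of Proposition \ref{edges1}: when $\dim \cone(\delta_1, \ldots, \delta_s) = \dim N_n(X) - 1$, i.e.\ $r = \rho - 1$, the ray $\cone(l) = \cone(\eta)$ is an edge of $K(Y)^\vee = \Nef^n(X)$, which is exactly (e). No further calculation is needed; the whole corollary is a packaging of the proposition in the specific geometric setting of cones of cycles.
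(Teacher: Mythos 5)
Your proposal is correct and takes essentially the same approach as the paper: identify $K(I) = \Pseff_n(X)$, set $l = \eta$, $\phi = A^n$, and apply Proposition \ref{edges1}. The only minor difference is in the citation for the key fact that $A^n \in \ri(\Pseff_n(X)^\vee)$: the paper cites \cite[Cor.~3.15]{fl1} and \cite[Prop.~3.7]{fl2} directly for this, while you invoke the saliency results \cite[Prop.~1.3]{BFJ}, \cite[Thm.~1.4]{fl1} plus an argument; both routes are standard and lead to the same conclusion.
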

\begin{proof}
Define, for $\alpha \in N_n(X)$, $l(\alpha) = \eta \cdot \alpha$ and $\phi(\alpha) = A^n \cdot \alpha$. The fact that  $\phi \in \ri(\Pseff_n(X)^{\vee})$ 
is in \cite[Cor. 3.15]{fl1}, \cite[Prop. 3.7]{fl2}. Now $K(I) = \Pseff_n(X)$ so we can apply Proposition \ref{edges1}.
\end{proof}


\section{Tate classes on Jacobians of very general curves}
\label{A}
\centerline{\scalebox{.8}{by BEN MOONEN}}

\vskip .3cm

The purpose of this appendix is to explain some facts about Tate classes on the Jacobian of a very general curve. These facts are well-known to the experts and we claim no originality.

\subsection{}\label{TateCl}
Let $k$ be a field and let $k \subset \kbar$ be an algebraic closure. If $k_0 \subset k$ is a subfield, let $\Gamma_{k_0} = \Aut(\kbar_0/k_0) = \Gal(k_0^{\sep}/k_0)$. We say that a field is of finite type if it is finitely generated over its prime field.

Let $X$ be a smooth projective scheme over~$k$, and let $\ell$ be a prime number with $\ell \neq \mathrm{char}(k)$. We want to define when a cohomology class in $\mathrm{H}^{2n}(X_{\kbar},\Ql(n))$ is a Tate class. For this, choose a model $X_0/k_0$ of~$X$ over a subfield $k_0 \subset k$ of finite type. We have a natural isomorphism $\mathrm{H}^{2n}(X_{0,\kbar_0},\Ql(n)) \isomarrow \mathrm{H}^{2n}(X_{\kbar},\Ql(n))$ via which we obtain an action of $\Gamma_{k_0}$ on $\mathrm{H}^{2n}(X_{\kbar},\Ql(n))$. Then we say that a cohomology class $c \in \mathrm{H}^{2n}(X_{\kbar},\Ql(n))$ is a Tate class if $c$ has an open stabilizer in $\Gamma_{k_0}$. It is known (see for instance~\cite{Mo}, Section~1) that this notion is independent of how we choose the model $X_0/k_0$.

The even cohomology ring $\mathrm{H}^\ev(X) = \oplus_{n\geq 0}\; \mathrm{H}^{2n}(X_{\kbar},\Ql(n))$ is a finite dimensional commutative $\Ql$-algebra. The Tate classes form a $\Ql$-subalgebra that we denote by $\mathrm{Tate}(X) \subset \mathrm{H}^\ev(X)$.

\begin{remark}
We have a cycle class map $\mathrm{CH}(X_{\kbar}) \otimes \Ql \to \mathrm{H}^\ev(X)$. Its image is contained in $\mathrm{Tate}(X)$. The Tate conjecture predicts that the image is all of $\mathrm{Tate}(X)$.
\end{remark}

\subsection{}\label{rhol}
With $k$ and~$\ell$ as before, let $X$ be an abelian variety of dimension $g>0$ over~$k$. Let $V_\ell(X) = \Ql \otimes_{\Zl} T_\ell(X)$, where $T_\ell(X)$ is the Tate-$\ell$-module of~$X$. If there is no risk of confusion, we abbreviate $V_\ell(X)$ to~$V_\ell$. The choice of a polarization~$\theta$ of~$X$ gives rise to an alternating bilinear form $\phi \colon V_\ell \times V_\ell \to \Ql(1)$.

Let $(X_0,\theta_0)$ be a model of the polarized abelian variety $(X,\theta)$ over a subfield $k_0 \subset k$ of finite type. This gives rise to an identification $V_\ell(X_0) \isomarrow V_\ell$, via which we obtain a continuous action of the Galois group $\Gamma_{k_0}$ on~$V_\ell$. The pairing~$\phi$ is equivariant with respect to this action, with $\Gamma_{k_0}$ acting on~$\Ql(1)$ through the $\ell$-adic cyclotomic character $\chi_\ell\colon \Gamma_{k_0} \to \Zl^\times$. Hence the Galois representation on~$V_\ell$ is given by a homomorphism
\begin{equation}\label{rholX0}
\rho_\ell = \rho_{X_0,\ell} \colon \Gamma_{k_0} \to \CSp(V_\ell,\phi)\, ,
\end{equation}
and if $\nu \colon \CSp(V_\ell,\phi) \to \mG_{\mult,\Ql}$ is the multiplier character then $\nu \circ \rho_\ell  = \chi_\ell$. For further general discussion of such Galois representations we recommend \cite{Chi}, \cite{SerreMcGill}, \cite{SerreKyoto}.

The cohomology ring $\mathrm{H}(X) = \oplus_{n\geq 0}\; \mathrm{H}^n(X_{\kbar},\Ql)$ is isomorphic, as graded $\Ql$-algebra with $\Gamma_{k_0}$-action, to the exterior algebra of~$V_\ell^*$. The form~$\phi$ may be 
viewed as a cohomology class $[\theta] \in \mathrm{H}^{2}(X_{\kbar},\Ql(1)) \cong \bigl(\wedge^2 V_\ell^*\bigr)(1)$. To explain the notation we note that the polarization~$\theta$ corresponds to an algebraic equivalence class of line bundles on~$X$, and $[\theta]$ is simply the associated class in cohomology. In particular, $[\theta]$ is in the image of the cycle class map, and is therefore a Tate class. (Note that in the even cohomology ring $\mathrm{H}^\ev(X)$ we do include Tate twists, so $\mathrm{H}^\ev(X) \cong \oplus_{n\geq 0}\, \bigl(\wedge^{2n}\, V_\ell^*\bigr)(n)$. This is important 
since $\Gamma_{k_0}$ acts non-trivially on $\Ql(n)$ for $n \neq 0$.)

\begin{defi}
Let us say that the polarized abelian variety $(X,\theta)$ has property~$(\Theta)$ if the sub-algebra $\mathrm{Tate}(X) \subset \mathrm{H}^\ev(X)$ is generated, as a $\Ql$-algebra, by the class~$[\theta]$.
\end{defi}

\begin{remarks}\label{Rems}
\null \hskip 5cm
\begin{enumerate}[(1)]
\item If $(X,\theta)$ has property~$(\Theta)$ then $X_{\kbar}$ has Picard number~$1$ and any other polarization class on~$X$ is a scalar multiple of~$[\theta]$. We say that an abelian variety~$X$ has property~$(\Theta)$ if $(X,\theta)$ has property~$(\Theta)$ for some (equivalently: any) polarization~$\theta$.

\item If $X_0$ is a model of $X$ over a subfield $k_0 \subset k$ then it is immediate from how we defined Tate classes that $X$ has property~$(\Theta)$ if and only if $X_0$ has property~$(\Theta)$.

\item If $X$ has property~$(\Theta)$ then clearly the cycle class map $\mathrm{CH}(X_{\kbar}) \otimes \Ql \to \mathrm{Tate}(X)$ is surjective, confirming Tate's conjecture.

\item Every elliptic curve has property~$(\Theta)$. An abelian variety of dimension $2$ or~$3$ with Picard number~$1$ over~$\kbar$ has property~$(\Theta)$.
\end{enumerate}
\end{remarks}

\begin{defi}\label{BDef}
We say that the abelian variety~$X$ has property~(B) (for ``Big Galois image'') if for some polarization~$\theta$ and for some model $(X_0,\theta_0)$ over a subfield $k_0\subset k$ of finite type, the image of the Galois representation~$\rho_{X_0,\ell}$ is Zariski-dense in the algebraic group $\CSp(V_\ell,\phi)$.
\end{defi}

It can be shown that this property is independent of which model $(X_0,\theta_0)$ is chosen. Further, to deduce property~(B) it suffices to know that the Zariski closure of the image of~$\rho_{X_0,\ell}$ contains the group $\Sp(V_\ell,\phi)$. Indeed, with $\nu$ the multiplier character as before, $\nu \circ \rho_{X_0,\ell}$ is the $\ell$-adic cyclotomic character and $k_0$ contains only finitely many $\ell$-power roots of unity; therefore the image of $\nu \circ \rho_{X_0,\ell}$ is Zariski dense in~$\mG_{\mult,\Ql}$.

\begin{prop}\label{B=>Theta}
In the situation of \emph{\ref{rhol}}, if $X$ has property~\emph{(B)} then it has property~\emph{$(\Theta)$}.
\end{prop}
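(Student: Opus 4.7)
The plan is to translate the Tate condition into invariance under the algebraic group $\CSp(V_\ell,\phi)$ acting on cohomology, and then to reduce the statement to classical invariant theory for symplectic groups. Using property~(B), I would fix a polarized model $(X_0,\theta_0)$ over a finitely generated subfield $k_0 \subset k$ such that $\rho_{X_0,\ell}(\Gamma_{k_0})$ is Zariski dense in $\CSp(V_\ell,\phi)$. A class in $\mathrm{H}^{2n}(X_{\kbar},\Ql(n))$ is Tate iff it is fixed by some open subgroup $U \subseteq \Gamma_{k_0}$. Since $\rho_\ell(U)$ has finite index in $\rho_\ell(\Gamma_{k_0})$, its Zariski closure is a closed subgroup of finite index in $\CSp(V_\ell,\phi)$; as $\CSp(V_\ell,\phi)$ is a connected algebraic group (extension of $\mG_{\mult}$ by the connected group $\Sp(V_\ell,\phi)$ via~$\nu$), this closure coincides with all of $\CSp(V_\ell,\phi)$. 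Consequently
\begin{equation*}
\mathrm{Tate}(X) \;=\; \bigoplus_{n \geq 0} \bigl((\wedge^{2n} V_\ell^*)(n)\bigr)^{\CSp(V_\ell,\phi)}.
\end{equation*}

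Next I would compute the right-hand side by first restricting to $\Sp(V_\ell,\phi) \subset \CSp(V_\ell,\phi)$. By the first fundamental theorem of invariant theory for the symplectic group, the subalgebra of $\Sp(V_\ell,\phi)$-invariants in $\wedge^\bullet V_\ell^*$ is generated by the symplectic form $\phi \in \wedge^2 V_\ell^*$; in particular $(\wedge^{2n} V_\ell^*)^{\Sp}$ is the line $\Ql \cdot \phi^n$ for $0 \leq n \leq g$ and vanishes otherwise. Because $\Sp(V_\ell,\phi)$ acts trivially on $\Ql(n)$, the space $\bigl((\wedge^{2n} V_\ell^*)(n)\bigr)^{\Sp}$ is also one-dimensional, and it contains the $\CSp$-invariants.

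Finally, to identify this line with $\Ql\cdot [\theta]^n$, I would use that the class $[\theta]$, being defined over $k_0$, is $\Gamma_{k_0}$-invariant, hence $\CSp(V_\ell,\phi)$-invariant by Zariski density; so $[\theta]^n$ is a nonzero $\CSp$-invariant element of $(\wedge^{2n}V_\ell^*)(n)$ for every $0 \leq n \leq g$. Combining with the previous step, the entire one-dimensional space of $\Sp$-invariants is already $\CSp$-invariant and is spanned by $[\theta]^n$, so
\begin{equation*}
\mathrm{Tate}(X) \;=\; \bigoplus_{n=0}^{g} \Ql \cdot [\theta]^n,
\end{equation*}
which is the $\Ql$-algebra generated by $[\theta]$, establishing property~$(\Theta)$. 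The main obstacle is the first step, namely the passage from open subgroups of $\Gamma_{k_0}$ to the full algebraic group $\CSp(V_\ell,\phi)$; this is where both property~(B) and the connectedness of $\CSp$ enter, whereas the rest of the argument is a standard invariant-theoretic calculation combined with a Tate-twist bookkeeping that is automatic once a single nonzero $\CSp$-invariant class $[\theta]^n$ is exhibited in the relevant one-dimensional space.
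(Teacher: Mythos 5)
Your proof is correct and takes essentially the same route as the paper: reduce to showing that the symplectic-group invariants in the even exterior algebra of $V_\ell^*$ are exactly the polynomials in $[\theta]$, and then invoke classical invariant theory for $\Sp$. You make the preliminary reduction (Tate classes are $\CSp(V_\ell,\phi)$-invariant, via openness of stabilizers, Zariski density from property~(B), and connectedness of $\CSp$) more explicit than the paper's one-line ``it suffices to show'', but the argument is the same.
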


\begin{proof}
Let $G = \Sp(V_\ell,\phi)$. It suffices to show that there are no $G$-invariants in the even cohomology ring of~$X$ other than the polynomials in the class~$[\theta]$. As $G$ acts trivially on~$\Ql(n)$, the even cohomology is isomorphic, as a representation of~$G$, to the even exterior algebra of~$V_\ell^*$. The claim then follows from a result of classical invariant theory: the sub-algebra of $\Sp(V_\ell,\phi)$-invariants in $\wedge\, V_\ell^*$ is generated by the class of~$\phi$ in $\wedge^2\, V_\ell^*$, which in our notation is just 
$[\theta]$.
\end{proof}

\begin{prop}\label{verygen2}
Let $S$ be a geometrically integral scheme of finite type over a field~$F$ that is of finite type. Let $X$ be an abelian scheme over~$S$, and assume the generic fibre~$X_\eta$ has property~\emph{$(\Theta)$}. Let $F \subset k$ be a field extension. Then for a very general point $s \in S(k)$ the fibre~$X_s$ has property~\emph{$(\Theta)$}.
\end{prop}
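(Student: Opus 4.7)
The plan is to use $\ell$-adic monodromy and a specialization argument. Fix a prime $\ell\neq\mathrm{char}(F)$; after shrinking $S$, we may assume $\pi\colon X\to S$ is smooth projective carrying a relative polarization extending $\theta$. Let $\mathcal V = R^1\pi_*\Ql$, a lisse $\Ql$-sheaf on $S$, and let $\rho\colon \pi_1(S,\bar\eta)\to \mathrm{GL}(V_\ell(X_{\bar\eta}))$ be the associated arithmetic monodromy representation, with Zariski closure $G$. Combining Remark~\ref{Rems}(2) with the continuous surjection $\Gamma_{\kappa(\eta)} = \Gamma_{F(S)}\twoheadrightarrow \pi_1(S,\bar\eta)$, the hypothesis that $X_\eta$ has property $(\Theta)$ becomes the assertion that the algebra of $\mathrm{Lie}(G^\circ)$-invariants in $\bigoplus_n \bigl(\wedge^{2n}V_\ell^*\bigr)(n)$ is generated as a $\Ql$-algebra by $[\theta]$.

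For $s\in S(k)$, a choice of geometric point $\bar s$ above $s$ together with an \'etale path $\bar s \rightsquigarrow \bar\eta$ identifies $V_\ell(X_s)$ with $V_\ell(X_{\bar\eta})$ in such a way that the $\Gamma_{\kappa(s)}$-action factors as
\[
\Gamma_{\kappa(s)} \longrightarrow \pi_1(S,\bar s) \xrightarrow{\sim} \pi_1(S,\bar\eta) \xrightarrow{\rho} \mathrm{GL}(V_\ell(X_{\bar\eta})).
\]
Let $G_s\subseteq G$ be the Zariski closure of $\mathrm{Im}(\Gamma_{\kappa(s)})$. Since $\mathrm{Tate}(X_s)$ is precisely the algebra of $\mathrm{Lie}(G_s^\circ)$-invariants in $\bigoplus_n (\wedge^{2n}V_\ell^*)(n)$, property $(\Theta)$ for $X_s$ follows from the equality $G_s^\circ = G^\circ$.

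The proof thus reduces to the specialization statement: \emph{the locus $B=\{s\in S(k) : G_s^\circ \subsetneq G^\circ\}$ is contained in a countable union of proper Zariski-closed subsets of $S\times_F k$}. This is the main step and the principal obstacle. The plan is to exploit that $\pi_1(S,\bar\eta)$ is topologically finitely generated (since $S$ is of finite type over the finitely generated field $F$), hence has only countably many open subgroups. For each open subgroup $U\subsetneq \pi_1(S,\bar\eta)$ whose image under $\rho$ has Zariski-identity-component strictly contained in $G^\circ$, the corresponding connected finite \'etale cover $S_U\to S$ is defined over a finite extension of $F$, and its geometric image in $S\times_F k$ is a proper Zariski-closed subset $Z_U$. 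Using the structure theory of compact $\ell$-adic Lie groups --- in particular, that a closed subgroup is determined up to open subgroups by its Lie algebra --- one shows that any $s$ with $G_s^\circ\subsetneq G^\circ$ lies in some such $Z_U$; the countable union $\bigcup_U Z_U$ then contains $B$.

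Granted this claim, for $s\in S(k)$ outside $\bigcup_U Z_U$ we have $G_s^\circ = G^\circ$, hence $\mathrm{Tate}(X_s) = \mathrm{Tate}(X_\eta)$, and $X_s$ has property $(\Theta)$, as required. The Lie-theoretic detection step is the most delicate point: one needs to match closed $\ell$-adic subgroups with proper Lie algebras to specific open subgroups of $\pi_1(S,\bar\eta)$, which uses the Lazard--Serre theory of compact $\ell$-adic Lie groups combined with the finite generation of $\pi_1$.
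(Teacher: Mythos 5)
Your proposal takes a much heavier, monodromy-specialization route, and the central step of that route has a concrete gap. In step~(5), you want to cover the ``bad locus'' $B=\{s : G_s^\circ\subsetneq G^\circ\}$ by closed sets indexed by \emph{open} subgroups $U\subsetneq\pi_1(S,\bar\eta)$ ``whose image under $\rho$ has Zariski-identity-component strictly contained in $G^\circ$.'' That set of $U$'s is \emph{empty}: if $U$ is open in the compact $\ell$-adic analytic group $\pi_1(S,\bar\eta)$, then $\rho(U)$ and $\rho(\pi_1(S,\bar\eta))$ have the same Lie algebra, hence their Zariski closures have the same identity component. So the proposed union $\bigcup_U Z_U$ is empty and nothing is covered. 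There is also a secondary problem: the connected finite \'etale cover $S_U\to S$ is surjective, so its ``geometric image in $S\times_F k$'' is all of $S\times_F k$, not a proper closed subset. What you actually want --- that for very general $s$ the decomposition group $D_s$ (a \emph{closed}, generally not open, subgroup) has $\overline{\rho(D_s)}^\circ = G^\circ$ --- is a genuine specialization theorem for $\ell$-adic monodromy in the style of Serre and Cadoret--Tamagawa, and the appeal to ``Lazard--Serre theory combined with finite generation of $\pi_1$'' does not supply a proof; it is precisely the delicate part.

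In fact no such machinery is needed: the proposition follows in a few lines. Since $F$ is of finite type it is countable, and $S$ is of finite type over $F$, so $S_k$ has only countably many reduced closed subschemes defined over $F$ (or over a finite extension of $F$). If $s\in S(k)$ avoids all of those that are proper, then $s$ maps to the generic point of $S$ under $S_k\to S$. Writing $K=\kappa(\eta)$, this gives a field embedding $K\hookrightarrow k$ identifying $(X_s,\theta_s)$ with $(X_\eta,\theta_\eta)\otimes_K k$, i.e.\ $X_\eta$ is a model of $X_s$. Property~$(\Theta)$ is independent of the chosen model by Remark~\ref{Rems}(2), so $X_s$ inherits $(\Theta)$ from $X_\eta$. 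Your approach would be the natural one if $S(k)$ were forced to lie over closed points of $S$ (e.g.\ $k=\bar F$, a case where the proposition as stated says nothing, since then $S(k)$ is countable), but here the very-generality hypothesis simply lets you land on the generic point directly.
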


To be clear about the meaning of ``very general point'': the assertion is that there exist countably many closed subschemes $Z_i \subsetneq S_k$ such that for every $s \in S(k)$ outside the union of the~$Z_i$, the polarized abelian variety $(X_s,\theta_s)$ has property~$(\Theta)$. Note that this gives nothing if $S(k)$ 
is countable, which happens for instance when $k$ is finitely generated over the prime field.

\begin{proof}
There are countably many closed reduced subschemes $Z \subsetneq S_k$ that are defined over~$F$. Suppose $s \in S(k)$ does not lie on any of these subschemes. This means that under the morphism $S_k \to S$ the point~$s$ maps to the generic point of~$S$. If $K$ is the function field of~$S$, this gives us a field homomorphism $K \to k$ via which $(X_s,\theta_s)$ becomes isomorphic to $(X_\eta,\theta_\eta) \otimes_K k$. In other words: $X_\eta$ is a model of~$X_s$. By Remark~\ref{Rems}(2) and the assumption that $X_\eta$ has property~$(\Theta)$, it follows that $X_s$ has property~$(\Theta)$.
\end{proof}

\subsection{}\label{X/S}
Let $S$ be a geometrically irreducible scheme that is smooth and of finite type over a field~$F$. Let $\eta$ be the generic point of~$S$ and $K = \kappa(\eta)$ the function field. Choose an algebraic closure $K \subset \Kbar$, and let $\bar{\eta}$ be the corresponding geometric point of~$S$. Let $\ell$ be a prime number different from~$\mathrm{char}(F)$.

If $(X,\theta)$ is a principally polarized abelian scheme over~$S$, the modules $T_\ell(X_s)$, for $s$ a point of~$S$, are the geometric fibres of a smooth $\Zl$-sheaf~$\mathscr{T}_\ell$ on~$S$ on which we have an alternating bilinear form $\phi \colon \mathscr{T}_\ell \times \mathscr{T}_\ell \to \Zl(1)_S$. We take $\bar{\eta}$ as base point and write $T_\ell = T_\ell(X_\eta)$, which is the fibre of~$\mathscr{T}_\ell$ at~$\bar{\eta}$. We continue to write~$\phi$ for the polarization form on~$T_\ell$. The sheaf~$\mathscr{T}_\ell$ is then given by a representation
\[
\rho_{X/S,\ell} \colon \pi_1(S,\bar{\eta}) \to \CSp(T_\ell,\phi)
\]
of the \'etale fundamental group. The morphism of pointed schemes $\eta \colon (\Spec(K),\bar{\eta}) \to (S,\bar{\eta})$ induces a surjective homomorphism $\eta_* \colon \Gal(\Ksep/K) = \pi_1\bigl(\Spec(K),\bar{\eta}\bigr) \to \pi_1(S,\bar{\eta})$, and the composition $\rho_{X/S,\ell} \circ \eta_*$ is the Galois representation~$\rho_{X_\eta,\ell}$. In particular, the image of~$\rho_{X_\eta,\ell}$ is the same as the image of~$\rho_{X/S,\ell}$.

If one aims to show that the generic fibre~$X_\eta$ has property~$(\Theta)$ one may try to use a monodromy argument. For this, consider the ($\ell$-adic) geometric monodromy group $\mathscr{G}_{\mono} \subset \CSp(T_\ell,\phi)$, which is defined to be the image under~$\rho_{X/S,\ell}$ of the geometric fundamental group $\pi_1(S_{\Fbar},\bar{\eta})$. As the latter is a subgroup of $\pi_1(S,\bar{\eta})$, the image of~$\rho_{X_\eta,\ell}$ contains~$\mathscr{G}_{\mono}$. Because $\pi_1(S_{\Fbar},\bar{\eta})$ acts trivially on~$\Zl(1)$, the polarization form~$\phi$ is invariant (and not only semi-invariant) under the action of~$\mathscr{G}_{\mono}$, and hence $\mathscr{G}_{\mono} \subset \Sp(T_\ell,\phi)$. If $\mathscr{G}_{\mono}$ is Zariski-dense in $\Sp(V_\ell,\phi)$, where as before $V_\ell = T_\ell \otimes \Ql$, then by the remark after Definition~\ref{BDef} together with Proposition~\ref{B=>Theta} it follows that $X_\eta$ has property~$(\Theta)$.

\begin{example}\label{Mg}
Fix an integer $g \geq 2$. For $n$ an integer with $n\geq 3$, let $\mathscr{M}_g^{(n)}$ denote the moduli scheme over $\mZ[1/n,\zeta_n]$ (with $\zeta_n$ a primitive $n$th root of unity) of curves of genus~$g$ equipped with a symplectic level~$n$ structure. It is proven in~\cite{DeMu}, Section~5, that the geometric fibres of $\mathscr{M}_g^{(n)} \to \Spec\bigl(\mZ[1/n,\zeta_n]\bigr)$ are irreducible. To see this for the geometric fibre in characteristic~$0$ one may work over~$\mC$ and use topological methods. For the geometric fibres in positive characteristic the result is then a consequence of the existence of a smooth compactification of~$\mathscr{M}_g^{(n)}$, as this implies that all geometric fibres have the same number of components. A purely algebraic proof of this irreducibility result can be found in~\cite{Ek}.

Let $k$ be a field, $F_0$ its prime field, and $p \geq 0$ the characteristic. Let $\ell$ be a prime number, $\ell \neq p$. 
To avoid working with stacks, choose an 
integer $m\geq 3$ that is not divisible by~$p\ell$, let $F$ be a finite extension of~$F_0$ that contains a root of unity~$\zeta$ of order~$m$, and let $S = \mathscr{M}_{g,F}^{(m)}$.

We apply the discussion in~\ref{X/S} with $(X,\theta)$ the universal Jacobian scheme over~$S$. Let
$$
\bar{\rho}_i \colon \pi_1(S_{\Fbar},\bar{\eta}) \to \Sp(T_\ell/\ell^i T_\ell,\phi)
$$
be the representation of the geometric fundamental group on $T_\ell/\ell^i T_\ell \cong X_{\bar{\eta}}[\ell^i]\bigl(\Kbar\bigr)$, which is the same as the representation induced by~$\rho_{X/S,\ell}$. The corresponding Galois cover of~$S_{\Fbar}$ is
$$
\mathscr{M}_{g,\Fbar}^{(\ell^i m)} \to S_{\Fbar} = \mathscr{M}_{g,\Fbar}^{(m)}\, .
$$
The irreducibility of $\mathscr{M}_{g,\Fbar}^{(\ell^i m)}$ is equivalent to the fact that the representation~$\bar{\rho}_i$ is surjective. Taking the limit over all~$i$, we find that the geometric monodromy group~$\mathscr{G}_{\mono}$ of~$X/S$ is the full symplectic group $\Sp(T_\ell,\phi)$, and as discussed in~\ref{X/S} this implies that the generic Jacobian~$X_\eta$ has property~$(\Theta)$. Together with Proposition~\ref{verygen2} this gives the conclusion that for an arbitrary field~$k$ the Jacobian of a very general curve of genus~$g$ over~$k$ has property~$(\Theta)$.
\end{example}
\begin{remarks}
\null \hskip 5cm
\begin{enumerate}[(1)]
\item If we want to work over ``small'' fields, such as number fields, the results discussed so far do not give anything. However, for an abelian variety~$X$ over a field~$k$ of characteristic~$0$ with $\End(X_{\kbar}) = \mZ$, it can be shown by other methods that $X$ has property~(B) if a certain numerical condition on its dimension is satisfied. The first such results were obtained by Serre; an improved version by Pink is given in \cite{Pink}, Theorem~(5.14).

\item For $g\geq 2$ and $\mathrm{char}(k) \neq 2$, it is also true that the very general hyperelliptic curve of genus~$g$ over~$k$ has property~$(\Theta)$. The argument is the same as in~\ref{Mg}, using that for $\ell \notin \{2,p\}$ the geometric monodromy group is again the full symplectic group $\Sp(T_\ell,\phi)$ by \cite{AchPri}, Corollary~3.5. (For $\ell = 2$ a slight modification of the argument is needed.)
\end{enumerate}
\end{remarks}

\end{document}